\documentclass{elsarticle}

\usepackage[english]{babel}
\usepackage{amsmath,amsfonts}
\usepackage{epsf}
\usepackage{psfrag}
\usepackage{epsfig}
\usepackage{subfigure}
\usepackage{color}
\graphicspath{{}}

\date{\today}
\def\eps{\varepsilon}
\newcommand{\epsp}{\varepsilon_{\mathrm{p}}}
\def\R{{\mathbb R}}
\def\C{\mathbb C}
\def\Z{{\mathbb Z}}
\def\N{{\mathbb N}}

\newcommand{\lbdspace}{H^{\ltrace}_{\gamma}}

\def\rank{\,\mathrm{rank}\,}
\def\diag{\,\mathrm{diag}\,}

\newcommand{\xiup}{\xi^{\mathrm{r}}} 
\newcommand{\abs}[1]{\left| #1 \right|}
\newcommand{\Norm}[1]{\left\| #1 \right\|}
\newcommand{\paren}[1]{\left( #1 \right)}
\newcommand{\braces}[1]{\left\{ #1 \right\}}
\newcommand{\bracket}[1]{\left[ #1 \right]}
\newcommand{\lsp}{\langle}
\newcommand{\rsp}{\rangle}

\newcommand{\diffq}[2]{\frac{\partial #1}{\partial #2}}
\newcommand{\calL}{\mathcal{L}}
\newcommand{\calS}{\mathcal{S}}

\newcommand{\zerotwo}{(0,0)^{\top}}
\newcommand{\Scomp}{\tilde{\mathcal{S}}}
\newcommand{\ltrace}{\tau}
\newcommand{\vp}{v^{+}}
\newcommand{\vr}{v^{\rm r}}
\newcommand{\Tref}{T_{\rm r}}
\newcommand{\mind}{m}
\newcommand{\nind}{n}

\usepackage{amsthm}
\newtheorem{theorem}{Theorem}[section]
\newtheorem{lemma}[theorem]{Lemma}
\newtheorem{proposition}[theorem]{Proposition}
\newtheorem{corollary}[theorem]{Corollary}

\newdefinition{definition}[theorem]{Definition}
\newdefinition{remark}[theorem]{Remark}
\newdefinition{assumption}[theorem]{Assumption}


\newcommand{\beq}{\begin{equation}}
\newcommand{\eeq}{\end{equation}}


\begin{document}
\begin{frontmatter}

\title{Riesz bases and Jordan form of the translation operator in semi-infinite
periodic waveguides}

\author[nam]{T.~Hohage}
\ead{hohage@math.uni-goettingen.de}
\author[ul]{S.~Soussi\corref{cor1}\fnref{DFG}}
\ead{sofiane.soussi@gmail.com}

\fntext[DFG]{Most of this research was carried out while the second author was working in G{\"o}ttingen supported by the DFG grant RTG 1023.}
\cortext[cor1]{Corresponding author}

\address[ul]{Department of Mathematics and Statistics, University
of Limerick, Limerick, Ireland.}
\address[nam]{Institut f{\"u}r Numerische und Angewandte Mathematik,
University of G{\"o}ttingen, G{\"o}ttingen,
Germany.}

\begin{abstract}
We study the propagation of time-harmonic acoustic or transverse
magnetic (TM) polarized electromagnetic waves in a periodic waveguide
lying in the half-strip $(0,\infty)\times(0,L)$. 
It is shown that there exists a Riesz basis
of the space of solutions to the time-harmonic wave equation such that
the translation operator shifting a function by one periodicity length to the left
is represented by an infinite Jordan matrix which contains at most
a finite number of Jordan blocks of size $> 1$. Moreover, 
the Dirichlet-, Neumann- and mixed traces of this Riesz basis on the left
boundary also form a Riesz basis. Both the cases of frequencies in a band gap 
and frequencies in the spectrum and a variety of boundary conditions on the
top and bottom are considered.  
\end{abstract}

\begin{keyword}
photonic crystal \sep photonic band gap \sep periodic dielectric
medium \sep Floquet theory \sep analytic theory of operators
\MSC 35P30 \sep 78A40 \sep 35Q60  \sep 35B27  \sep 35B30
\end{keyword}


\end{frontmatter}

\section{Introduction}
Periodic media have received much attention in recent years since 
they can prohibit the propagation of electromagnetic
and acoustic waves in some frequency ranges \cite{yabl}.
This localization property is a consequence of band structure of
the spectrum of the underlying differential operator and of the
presence of band gaps in this spectrum. Waves with frequencies in a band gap will 
decrease exponentially inside such media as a consequence of the 
exponential decay of the Green's kernel, see \cite{FigoKlein1}.
The band structure of the spectrum is explained by Floquet theory, 
see \cite{FigoKuch1,FigoKuch2,Kuchment_Floquet}. We 
refer the reader to \cite{Kuchment} for a mathematical introduction to
photonic crystals and to  \cite{joa2,Sakoda} for a physical introduction.

The strong localization property can be used to construct devices
that mould the flow of light or sound at a very small length scale.
The simulation of such devices requires the numerical solution of
differential equations in locally perturbed periodic media, 
which is a challenging task. The proof of existence and numerical
computation of defect modes has been studied in 
\cite{AS,FigoKlein4,FigoKlein5,FigoKlein1,FigoKlein3,FigoKlein2,soussi3}. 
Problems in locally perturbed infinite periodic media 
with a source term considered as part of the problem have been studied
more recently in \cite{FlissJoly:08,Fliss:09,JolyFlissLi}.
Here a natural approach consists in solving a boundary value problem on
a compact set enclosing the perturbation and using a Dirichlet-to-Neumann
or a related operator on the artificial boundary.
%
Hence, the problem is decomposed
into an interior and an exterior problem. The purpose of this paper
is to contribute to the understanding of exterior boundary value problems
in semi-infinite periodic waveguides. A summary of our results has already
been given in the abstract, for a precise formulation we refer to the next
section.

The study of wave propagation in doubly periodic half planes  
can be reduced via the Floquet transform to wave propagation in
semi-infinite periodic waveguides with quasi-periodic boundary conditions 
on the lateral boundaries. Moreover, using a clever trick proposed by
Fliss \& Joly
\cite{FlissJoly:08}, boundary value problems in the doubly periodic exterior 
of a square can be reduced in some sense to boundary value problems
in a doubly-periodic half plane. Therefore, the analysis of this paper
is also relevant for these problems.


Our results prove two open conjectures formulated in connection with
a numerical method to compute the Neumann-to-Dirichlet map proposed by
Fliss, Joly and Li (see the discussion of Theorem \ref{theo:main}). 
Actually, this study arose from attempts to justify an alternative numerical approach,
which will be published elsewhere. Moreover, our results
explain with a new approach the exponential decay of waves in
periodic media with frequency in the band gap studied in
\cite{FigoKlein4,FigoKlein5,FigoKlein1,FigoKlein3,FigoKlein2}
and even provides the
optimal decay rate which corresponds to the one of the slowest decaying
Floquet mode, which may provide guidance for photonic
crystal optimization by focusing on this Floquet mode and
trying to make its decay as fast as possible.

The paper is outlined as follows: in {\S}
\ref{sec:the_problem} we introduce the problem and present the main theorem.
Some known prerequisites are collected in  {\S} \ref{sec:preliminaries},
in particular a generalization of Rouch{\'e}'s theorem 
shown in \cite{GohbergSigal}, which will serve as an essential tool in the following analysis.
The remaining part of the paper is dedicated to the proof of the main theorem
and will be summarized at the end of section \ref{sec:preliminaries}.
There are two appendices on radiation conditions and on uniqueness results.


\section{Statement of the problem and the main results}
\label{sec:the_problem}
The propagation of time-harmonic acoustic or transverse magnetic (TM) 
polarized electromagnetic waves in a 2-D waveguide lying in the half-strip
$S^+:=\R^+\times(0,L)$ is described by the differential equation
\begin{subequations}\label{dirprob}
\begin{equation}\label{eq:pde}
\Delta v + \omega^2 \epsp v =0, \qquad \text{in }S^+.
\end{equation}
We assume that $\epsp\in L^{\infty}(S)$ with $S:=\R\times(0,L)$ 
is periodic with period length $1$ in the first variable and bounded away from $0$, i.e.
\begin{alignat*}{2}
&\epsp(x_1+1,x_2)=\epsp(x_1,x_2),\qquad
&&\mbox{for all } (x_1,x_2)\in S,\\
&0<\mathrm{ess inf}\epsp\leq \epsp\leq \bar\varepsilon\qquad 
&&\mbox{a.e.\ in } S
\end{alignat*}
\begin{figure}[ht]
\centering
\begin{psfrags}
\psfrag{zero}[][][1]{$0$}
\psfrag{one}[][][1]{$1$}
\psfrag{L}[][][1]{$L$}
\psfrag{x1}[][][1]{$x_1$}
\psfrag{x2}[][][1]{$x_2$}
\includegraphics[width=0.65\linewidth]{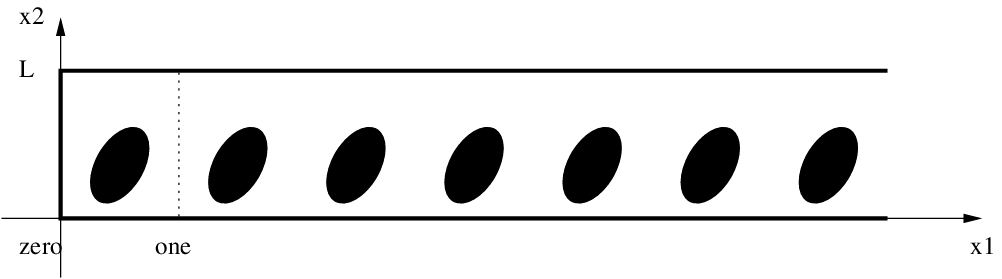}
\end{psfrags}
\caption{Dielectric permittivity of the semi-infinite waveguide.}
\label{spectrum}
\end{figure}
for some $\bar\varepsilon>0$.
On the top and bottom of $S^+$ we consider a boundary condition
\begin{equation}\label{eq:bc_top_bottom}
\gamma v(x_1,\cdot)=\zerotwo,\qquad  x_1>0
\end{equation}
with one of the following  boundary value  operators  $\gamma:H^2((0,L))\to \R^2$:
\begin{alignat*}{2}
&\mbox{Dirichlet} & \gamma_{\rm D} v &:= (v(0),v(L))^{\top}\\
&\mbox{Neumann} & \gamma_{\rm N} v &:= (v'(0),v'(L))^{\top}\\
&\mbox{mixed} & \gamma_{\rm DN} v &:= (v(0), v'(L))^{\top}\\
&\mbox{$\beta$-quasi-periodic }\quad &\gamma_{\beta} v &:= 
(e^{i\beta}v(0)-v(L),e^{i\beta}v'(0)-v'(L))^{\top},\quad
\beta\in[0,2\pi).
\end{alignat*}
To treat $\beta$-quasi-periodic boundary conditions with $\beta =0$ and $\beta =\pi$
we have to impose the symmetry condition $\epsp(x_1,x_2) = \epsp(x_1,L-x_2)$,
$x\in (0,1)\times(0,L)$ for reasons explained in {\S} \ref{sec:main_thm_proof}. \\
Furthermore, we imposed a boundary condition on the left:
\begin{equation}\label{eq:bc_left}
\ltrace v=f\qquad \mbox{with}\qquad 
\ltrace v := \theta_{\rm D} v(0,\cdot)+ \theta_{\rm N} \diffq{v}{x_1}(0,\cdot).
\end{equation}
Here $\theta_{\rm D},\theta_{\rm N}\in\C$ with $|\theta_{\rm D}|+|\theta_{\rm N}|>0$,
and we consider $\ltrace$ as an operator 
with values in a space $\lbdspace$ which depends on
$\theta_{\rm N}$ and $\gamma$ and will be defined in {\S} \ref{sec:preliminaries}.
 
To describe our condition on the behavior of the solution as $x_1\to\infty$,
which will complete the formulation of the boundary value problem, we need the
following definition: 
\begin{definition}\label{def:genFloquet_mode}
A \emph{Floquet mode} is a nontrivial solution to 
\eqref{eq:pde}, \eqref{eq:bc_top_bottom} of the form
\[
\exp(i\xi x_1)\sum_{j=0}^{m} x_1^{m-j} u_j(x_1,x_2)
\]
with $\xi\in\C$ and functions $u_j$ satisfying $u_j(x_1+1,x_2)=u_j(x_1,x_2)$
for all $(x_1,x_2)\in S^+$. 
We call $\xi$ the \emph{quasi momentum} and $m$ the \emph{order} of the
Floquet mode, assuming that $u^{(m)}\neq 0$. 
\end{definition}
Note that the quasi momentum is only determined
up to an additive integer multiple of $2\pi$. Typically we will choose 
$\Re \xi\in [-\pi,\pi)$. We complete the formulation of the boundary value problem by the
radiation condition
\begin{equation}\label{eq:radiation}
v\in H_{\gamma}^{1,+}(S^+)\quad\mbox{where}\quad
H_{\gamma}^{1,+}(S^+):= H_{\gamma}^1(S^+) \oplus \mathrm{span}\{v_1^+,\cdots,\vp_{\overline{n}}\},
\end{equation}
\end{subequations}
where $\vp_1,\dots,\vp_{\overline{n}}$ are Floquet modes with real quasi momentum, the 
choice of which is described in the following:

It is known (see \cite[Corollary 5.1.5]{B:nazarov_plamenevsky} or  
Corollary \ref{coro:propagating_modes}) 
that the wave guide supports a finite even number $2\overline{n}$ of 
linearly independent Floquet modes with real quasi momentum. 
To formulate a radiation condition we introduce the sesquilinear form
\begin{equation}\label{eq:defi_q}
q_{x_1}(v,w):= \int_0^L
\paren{\diffq{v}{x_1}(x)\overline{w(x)}-v(x)\overline{\diffq{w}{x_1}(x)}}\,dx_2,\qquad 
x_1\geq 0
\end{equation}
for solutions $v,w$ to \eqref{eq:pde} and \eqref{eq:bc_top_bottom}. 
Since $q_{x_1}$ is actually independent of $x_1$ as a consequence of Green's theorem, 
we omit the index $x_1$ in the following. If the time dependence is given by $\exp(-i\omega t)$
with $\omega>0$, then $\Im q(v,v)$ is proportional (with positive constant) to 
the energy flux through a cross section $\{x_1\}\times (0,L)$ 
(see e.g.~\cite[{\S} 5.6.3]{B:nazarov_plamenevsky} where the
sign convention $\exp(i\omega t)$ is used). From a physical mode we expect that
energy is transported to the right, so $\Im q(v,v)>0$. 
It can be shown (see \cite[Theorem 5.3.2]{B:nazarov_plamenevsky}) 
that there exists a basis 
$\{\vp_1,\cdots,\vp_{\overline{n}},v_1^-,\cdots,v_{\overline{n}}^-\}$
of the span of all Floquet modes with real quasi momentum, which consists of Floquet modes satisfying the orthogonality
and normalization conditions
\begin{subequations}\label{eq:orthonormalq}
\begin{eqnarray}
&&q(v_j^{+},v_k^{-}) = 0, \qquad \qquad \mbox{for
}j,k\in\{ 1,\dots,\overline{n}\},\\
&&q(v_j^{+},v_k^{+})=i\delta_{j,k},\qquad q(v_j^{-},v_k^{-})=-i\delta_{j,k},\qquad \mbox{for }
j,k\in\{1,\dots,\overline{n}\}.
\end{eqnarray}
\end{subequations}
(As discussed in Appendix \ref{appendix:radiation}, 
$\mathrm{span}\{\vp_1,\cdots,\vp_{\overline{n}}\}$ is not necessarily uniquely determined
by this condition for all $\omega$, but the following results hold for any choice of
the $\vp_n$.)

Each element $v\in H_{\gamma}^{1,+}(S^+)$ has a unique representation of the 
form $v= \tilde{v}+\sum_{n=1}^{\overline{n}}\alpha_n \vp_n $ with 
$\tilde{v}\in H_{\gamma}^1(S^+)$
and $\alpha_n\in \C$, and we introduce a norm on $H_{\gamma}^{1,+}(S^+)$ by
$\|v\|_{H_{\gamma}^{1,+}(S^+)}^2= \|\tilde{v}\|_{H^1_{\gamma}(S^+)}^2 + \sum_{n=1}^{\overline{n}} |\alpha_n|^2$.
%
%

Let us introduce the translation operator 
\[
(\mathcal{T}v)(x) := v(x_1+1,x_2)
\]
and assume that the Floquet modes $\vp_n$ have been chosen
such that $H^{1,+}_{\gamma}(S^+)$ is invariant under $\mathcal{T}$.
(This is always the case if $\vp_1,\dots,\vp_{\overline{n}}$ are all of order $0$.)
Let $V\subset H_{\gamma}^{1,+}(S^+)$ be the linear subspace of all weak solutions
to \eqref{eq:pde} and \eqref{eq:bc_top_bottom}. 
If the boundary value problem \eqref{dirprob} is well posed,  
i.e.\  the operator $\ltrace|_V:V\to \lbdspace$ has a bounded inverse
(see Prop. \ref{prop:wellposed} and Appendix \ref{appendix:uniqueness} for sufficient
conditions),
we can define the \emph{monodromy operator} 
\[
\mathcal{R}:= \ltrace \mathcal{T} (\ltrace|_V)^{-1}:\lbdspace \to\ \lbdspace,
\]
which maps the trace of a solution to \eqref{dirprob} to its trace at
one periodicity length to the right.
We are now in a position to formulate our  main result:
\begin{theorem}\label{theo:main}
\begin{enumerate}
	\item\label{part:translation}
There exist  Floquet modes $\vp_n$, $n\in \N$ the first $\overline{n}$ of which are
defined as described above, which form a Riesz basis of $V$.
This basis can be chosen such that $\mathcal{T}:V\to V$ is represented
by an infinite Jordan matrix, which contains at most 
a finite number of Jordan blocks of size greater than $1$ 
all of which are of finite size. 
\item
If problem \eqref{dirprob} is well posed for some choice of $\ltrace$, 
then $\{\ltrace \vp_n:n\in\N\}$ is a Riesz basis of $\lbdspace$, 
and with respect to this basis the operator 
$\mathcal{R}$ is represented by the same Jordan matrix 
as $\mathcal{T}$ in part \ref{part:translation}.
\end{enumerate}
\end{theorem}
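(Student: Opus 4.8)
The plan is to translate the statement into a spectral problem for $\mathcal{T}$ and to exploit that the generalized eigenvectors of $\mathcal{T}$ are precisely the Floquet modes. First I would record the elementary but decisive computation of how $\mathcal{T}$ acts on a Floquet mode: if $w(x)=e^{i\xi x_1}\sum_{j=0}^m x_1^j u^{(j)}$ has quasi momentum $\xi$ and order $m$, then with $\lambda:=e^{i\xi}$ the binomial expansion of $(x_1+1)^j$ shows that $(\mathcal{T}-\lambda)w$ is again a Floquet mode with the same multiplier $\lambda$ but order at most $m-1$, while the top coefficient $u^{(m)}$ is merely multiplied by $\lambda$. Hence the span of Floquet modes with a fixed multiplier $\lambda$ is $\mathcal{T}$-invariant, $\mathcal{T}$ acts on it as $\lambda I+N$ with $N$ nilpotent, and the order $m$ is exactly the length of the associated Jordan chain minus one. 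In this language part~\ref{part:translation} becomes the assertion that $V$ possesses a Riesz basis of generalized eigenvectors of $\mathcal{T}$ for which only finitely many Jordan chains have length greater than one, each of finite length.

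Next I would set up the nonlinear eigenvalue (operator-pencil) problem whose generalized eigenvectors are the Floquet modes, working on a single period cell $(0,1)\times(0,L)$ with the lateral condition $\gamma$ and the quasi-periodicity $v(1,\cdot)=\lambda v(0,\cdot)$, $\partial_{x_1} v(1,\cdot)=\lambda\,\partial_{x_1} v(0,\cdot)$. The resulting family is Fredholm and depends holomorphically on $\lambda$ on $\C\setminus\{0\}$, so the generalized Rouch\'e theorem of \S\ref{sec:preliminaries} applies: the set of admissible multipliers is discrete, each generalized eigenspace is finite dimensional, and the partial multiplicities recorded by the Gohberg--Sigal theory are exactly the Jordan block sizes. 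Comparing with a reference configuration in which $\epsp$ is replaced by a constant, where the modes separate into transverse eigenfunctions times $e^{i\xi x_1}$ with $\xi^2=\omega^2\bar\varepsilon-(k\pi/L)^2$ so that $\lambda\to 0$ as $k\to\infty$, I would invoke the estimates of \S\ref{sec:rapid_decay_modes} to show that below some threshold $|\lambda|<\rho$ all modes are simple (order $0$), are in one-to-one correspondence with the reference evanescent modes, and are quantitatively close to them. Since higher-order modes can occur only at the isolated multipliers with $|\lambda|\ge\rho$ in the relevant region, there are at most finitely many, which yields the finiteness statement; finite-dimensionality of the eigenspaces gives that each such block is finite.

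With the algebraic picture in hand, the Riesz-basis claim would follow by a closeness argument. I would normalize the decaying Floquet modes so that their tail is quadratically close to the reference Riesz basis of evanescent modes --- the estimates of \S\ref{sec:rapid_decay_modes} are designed to make this precise --- and combine the finitely many remaining decaying modes, the exceptional Jordan chains, and the rightward propagating modes $v_1^+,\dots,v_{\overline{n}}^+$ (whose independence and compatibility are encoded in the relations \eqref{eq:orthonormalq} for $q$) into a complete, minimal system in $V$. A Bari-type perturbation theorem then upgrades quadratic closeness, completeness, and minimality to the Riesz-basis property, and ordering the chains turns the matrix of $\mathcal{T}$ into the asserted infinite Jordan matrix. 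Establishing completeness of the Floquet expansion in $V$ and the quantitative closeness uniformly up to the finitely many exceptional multipliers is the step I expect to be the main obstacle.

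Finally, part~2 is a soft consequence of part~\ref{part:translation} together with well-posedness. Under the hypothesis the operator $\ltrace|_V:V\to\lbdspace$ is a bounded isomorphism with bounded inverse, and such isomorphisms carry Riesz bases to Riesz bases, so $\{\ltrace v_n^+:n\in\N\}$ is a Riesz basis of $\lbdspace$. The identity $\mathcal{R}\,\ltrace=\ltrace\,\mathcal{T}$ on $V$, immediate from $\mathcal{R}=\ltrace\,\mathcal{T}\,(\ltrace|_V)^{-1}$, gives $\mathcal{R}(\ltrace v_n^+)=\ltrace(\mathcal{T}v_n^+)$; expanding $\mathcal{T}v_n^+=\sum_m J_{mn}v_m^+$ and applying the linear map $\ltrace$ yields $\mathcal{R}(\ltrace v_n^+)=\sum_m J_{mn}\,\ltrace v_m^+$, i.e.\ $\mathcal{R}$ has, with respect to the image basis, exactly the same Jordan matrix $J$ as $\mathcal{T}$.
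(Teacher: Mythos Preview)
Your outline is broadly correct and parallels the paper's strategy: the Jordan structure is exactly Proposition~\ref{prop:char_val_eig}, the simplicity of all but finitely many characteristic values and the quantitative closeness to a reference system are exactly Theorem~\ref{theo:large_char_val} and Lemma~\ref{lemm:T0}, and your argument for part~2 (isomorphisms carry Riesz bases to Riesz bases, and the conjugation $\mathcal{R}=\ltrace\,\mathcal{T}\,(\ltrace|_V)^{-1}$ preserves the matrix) is precisely what the paper does.

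The one substantive difference is how the obstacle you correctly flag---completeness of the Floquet modes in $V$---is resolved. You propose to prove completeness directly and then invoke a Bari-type theorem. The paper avoids this entirely by reversing the order of the two parts: it first fixes the specific Robin trace $\ltrace v = v(0,\cdot)+i\,\partial_{x_1}v(0,\cdot)$, shows that $\ltrace T_0$ is Fredholm of index~$0$ (Lemma~\ref{lemm:T0}) and that $\ltrace(T-T_0)$ is compact (Proposition~\ref{prop:T_well_defined}), so $F:=\ltrace T$ is Fredholm of index~$0$. Injectivity of $F$ then comes from injectivity of $T$ (Proposition~\ref{prop:T_injective}) together with uniqueness for this Robin problem, which ultimately rests on a two-dimensional unique continuation principle (Proposition~\ref{prop:uniqueness_robin}). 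Hence $F$ is boundedly invertible, $F^{-1}\ltrace$ is a bounded left inverse of $T$, and $\mathrm{ran}(T)=V$ follows from uniqueness of solutions. This simultaneously delivers the Riesz basis in $V$ and completeness, without ever proving completeness as a separate lemma. Your Bari route is viable in principle, but the compact-perturbation/Fredholm argument on the \emph{trace side} is what makes the paper's proof short; if you try to establish completeness directly in $V$ you will likely end up reproducing this detour anyway.
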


The second part of this theorem proves the conjectures in
\cite[Remark 5.1]{JolyFlissLi} and \cite[Conjecture 3.2.52]{Fliss:09}.
(In the latter case, to prove that $R|_{\mathrm{span}
\{\ltrace \vp_n:n>\overline{n}\}}$ has spectral radius $<1$,
we also have to take into account Proposition \ref{prop:large_char_val}.)

We point out that the well-posedness assumption in the second part of Theorem \ref{theo:main}
is always satisfied for certain types of Robin trace operators $\tau$ 
(see Prop.\ \ref{prop:wellposed} and \ref{prop:uniqueness_robin}). We will use this fact
in the proof of the first part of the theorem. 
For other trace operators $\tau$ non-uniqueness may occur at certain frequencies $\omega$.

\section{Preliminaries and outline of the proof}\label{sec:preliminaries}
\subsection{Sobolev spaces on the cross section}\label{sec:spectra}
For the boundary value operators $\gamma \in\{\gamma_{\rm D},\gamma_{\rm N},\gamma_{\rm DN},\gamma_{\beta}\}$ defined in the introduction, we define the second derivative operators
\begin{align*}
&D_{\gamma}:\mathcal{D}(D_{\gamma})\to L^2((0,L)),\qquad  w\mapsto -w''\\
&\mbox{with }\mathcal{D}(D_{\gamma}) := \{w\in H^2((0,L)): \gamma w = \zerotwo \}.
\end{align*}
It is well known that these operators are positive and self-adjoint with compact 
resolvents.
Moreover, complete orthonormal systems of eigenpairs \linebreak
$\big\{\big(\tilde{\psi}_k^{(\gamma)},(\tilde{\kappa}_k^{(\gamma)})^2\big):
k\in \mathcal{I}\big\}$ are known explicitly:
$$
\begin{array}{l c c c}
\mbox{boundary condition} & \tilde{\psi}_k^{(\gamma)}(t) & \tilde{\kappa}_k^{(\gamma)} &\mathcal{I} \\  \hline
\mbox{Dirichlet} & 
   \sqrt{\frac{2}{L}}\sin\paren{\frac{\pi k}{L}t} & \frac{\pi k}{L} &\N\\ 
\mbox{Neumann} & 
   \sqrt{\frac{2}{L}}\cos\paren{\frac{\pi k}{L}t} & \frac{\pi k}{L} &\N\cup \{0\}\\
\mbox{mixed} &
   \sqrt{\frac{2}{L}}\sin\paren{\frac{\pi (2k-1)}{2L}t} & \frac{\pi (2k-1)}{2L} &\N\\
\beta\mbox{-quasi-periodic} & \sqrt{\frac{1}{L}}\exp\paren{i\frac{\beta+2\pi k}{L}t} &
\frac{\beta+2\pi k}{L} &\Z \\ \hline
\end{array}
$$
To simplify our notation we will often suppress the dependence of $\tilde{\psi}_k^{(\gamma)}$ and $\tilde{\kappa}_k^{(\gamma)}$ on $\gamma$ 
in the following.  
Sobolev spaces corresponding a boundary value operator $\gamma \in\{\gamma_{\rm D},\gamma_{\rm N},
\gamma_{\rm DN},\gamma_{\beta}\}$ can then be defined by 
$H^{s}_{\gamma}((0,L)) = \mathcal{D}((I+D_{\gamma})^{s/2})$ with norm
$\|w\|_{H^s_{\gamma}}:=\|(I+D_{\gamma})^{s/2}w\|_{L^2}$ for $s\geq 0$, 
and for $s<0$ the space $H^s_{\gamma}((0,L))$ can be defined as completion
of $L^2((0,L))$ under the norm $\|\cdot\|_{H^s_{\gamma}}$. 
More explicitly,
\begin{equation}\label{eq:normSobo}
\|w\|_{H^{s}_{\gamma}}^2 = \sum_{k\in\mathcal{I}} 
(1+\tilde{\kappa}_k^2)^s|\lsp w,\tilde{\psi}_k\rsp|^2
\end{equation}

It will be convenient to renumber the square roots of the $\tilde{\kappa}_l$'s in increasing order
such that 
\[
\sigma(D_{\gamma}) = \{\tilde{\kappa}_k^2:k\in \mathcal{I}\} = \{\kappa_l^2: l\in \N\}
\]
and $0\leq \kappa_1\leq \kappa_2\leq \dots$. This defines a bijective mapping
$\N\to \mathcal{I}$, $k\mapsto l(k)$.  
Note that
\begin{equation}\label{eq:sigma_ls}
\kappa_{2l+1} = \kappa_1 + \frac{2\pi}{L}l \qquad \mbox{and}\qquad
\kappa_{2l+2} = \kappa_2 + \frac{2\pi}{L}l
\end{equation}
for $l=1,\dots$. Moreover, we will write $\psi_k = \tilde{\psi}_{l(k)}$ for
$k\in\N$.

\subsection{Sobolev spaces on the strip}
We introduce the self-adjoint negative Laplace operators
\begin{align*}
&\tilde{D}_{\gamma} : \mathcal{D}(\tilde{D}_{\gamma})\to L^2(S),\qquad v\mapsto -\Delta v,\\
&\mbox{with }\mathcal{D}(\tilde{D}_{\gamma}) := \{v\in H^2(S):
\forall x_1\in\R\, \gamma v(x_1,\cdot) =\zerotwo\}
\end{align*}
for $\gamma \in \{\gamma_{\rm D},\gamma_{\rm N},\gamma_{\rm DN},\gamma_{\beta}\}$.
Then we define
\[
H^s_{\gamma}(S):= \mathcal{D}\paren{(I+\tilde{D}_{\gamma})^{s/2}},\qquad s\geq 0,
\]
and $\|v\|_{H^s_{\gamma}}:=\|(1+\tilde{D}_{\gamma})^{s/2}v\|_{L^2}$. 
For $s<0$, $H^s_{\gamma}(S)$ is defined as completion of $L^2(S)$ under
the norm $\|\cdot\|_{H^s_{\gamma}}$.  
It can be shown that $H^1_{\gamma_{\rm D}}(S) = H^1_0(S)$, 
$H^1_{\gamma_{\rm N}}(S) = H^1(S)$, and $H^1_{\gamma_{\beta}}(S) = 
\{v\in H^1(S):e^{i\beta}v(\cdot,0)=v(\cdot,L)\}$, and the norm $\|\cdot\|_{H^1_{\gamma}}$
is equivalent to $\|\cdot\|_{H^1}$ given by $\|v\|_{H^1}^2=\int_S(|v|^2+|\nabla v|^2)\,dx$. 
$H^s_{\gamma}(S^+)$ can be defined as the set of all restrictions
to $S^+$ of functions in $H^s_{\gamma}(S)$ with 
$\|v\|_{H^s_{\gamma}(S^+)} :=\inf\{\|\tilde{v}\|_{H^s_{\gamma}(S)}:\tilde{v}|_{S+}=v\}$. 
Moreover, the trace operators
\begin{align*}
\ltrace_{\rm D} &: H^1_{\gamma}(S^+)\to H^{1/2}_{\gamma}((0,L)),\qquad &&v\mapsto v(0,\cdot), \\
\ltrace_{\rm N} &: H^1_{\gamma}(S^+;\Delta) \to H^{-1/2}_{\gamma}((0,L)),&&
v\mapsto \diffq{v}{x_1}(0,\cdot),
\end{align*}
are well defined, continuous, and surjective, and have bounded right-inverses
(see \cite{LionsMagenesI}). (Here $\|v\|_{H^1_{\gamma}(S^+;\Delta)}^2:= 
\|v\|_{H^1_{\gamma}(S^+)}^2 + \|\Delta v\|_{L^2(S^+)}^2$.)
We choose $\lbdspace:=H^{1/2}_{\gamma}((0,L))$ if $\theta_{\rm N}=0$ and
$\lbdspace:=H^{-1/2}_{\gamma}((0,L))$ if $\theta_{\rm N}\neq 0$.  

\subsection{Floquet transform}
The Floquet transform is defined  by
\begin{align*}
\mathcal{F}:L^2(S)\to L^2\left((-\pi,\pi),L^2(\Omega)\right)\\
\mathcal{F}v(\alpha,x):=\frac{1}{\sqrt{2\pi}}\sum_{l\in\Z}v(x_1+l,x_2)e^{-i\alpha(x_1+l)}
\end{align*}
where $\Omega=\R/\Z\times(0,L)$. It is isometric 
and its inverse is given by
$v(x) = \frac{1}{\sqrt{2\pi}} \int_{-\pi}^{\pi}\mathcal{F}v(\alpha,x)e^{i\alpha x_1}\,d\alpha$,
$x\in S$ (\cite{Kuchment_Floquet}).

We will frequently use the orthonormal
bases $\{\varphi_{\mind,\nind}^{(\gamma)}:\mind\in \Z, \nind\in\N\}$ of $L^2(\Omega)$ defined by
\begin{equation}\label{genBasis}
\varphi_{\mind,\nind}^{(\gamma)}(x):=\exp(2\pi i \mind x_1) \psi_{l_2}^{(\gamma)}(x_2),\qquad x\in\Omega\,.
\end{equation}
If $v\in H^s_{\gamma}(S)$, then for all $\alpha\in [-\pi,\pi]$ the function
$\mathcal{F} v(\alpha,\cdot)$
belongs to the Sobolev space $H^s_{\gamma}(\Omega)$  defined by
$H^s_{\gamma}(\Omega) := \{u\in L^2(\Omega): \|u\|_{H^s(\Omega)}<\infty\}$ with 
\[
\|u\|_{H^s_{\gamma}(\Omega)} := 
\big(\sum_{l\in \Z\times\N} (1+|l|^2)^s |\lsp \varphi_l^{(\gamma)},u\rsp|^2\big)^{1/2}
\]
for $s\geq 0$ and as completion of $L^2(\Omega)$ under this norm for
$s<0$ (\cite{Kuchment_Floquet}). For $\alpha\in [-\pi,\pi]$ the operator 
$\Delta_{\alpha}^{(\gamma)}:H^2_{\gamma}(\Omega)\to L^2(\Omega)$
uniquely defined by the property
\[
\Delta_{\alpha} (\mathcal{F}v(\alpha,\cdot)) =
(\mathcal{F} \Delta v)(\alpha,\cdot) 
\] 
is given explicitely by
\begin{equation}\label{eq:Delta_alpha}
\begin{split}
\Delta_{\alpha} &= e^{-i\alpha x_1}\Delta  e^{i\alpha x_1}
= (\partial_{x_1}+i\alpha)^2 +\partial_{x_2}^2   
 =  \Delta  + 2i\alpha \partial_{x_1}  -\alpha^2\,.
\end{split}
\end{equation}

\subsection{Floquet modes and characteristic values of $(B_{\xi})$}\label{sec:eigenval_characteristicval}
Due to \eqref{eq:Delta_alpha} the mapping
$\alpha\mapsto \Delta_{\alpha}$ is a polynomial with coefficients in
\linebreak $\mathcal{L}(H^s(\Omega), H^{s-2}(\Omega))$ for all $s$, so in particular, it
has a holomorphic extension denoted by
\begin{equation}\label{eq:defi_Delta_xi}
\Delta_{\xi}:=\Delta  + 2i\xi \partial_{x_1}  -\xi^2,\qquad \xi\in\C. 
\end{equation}
Moreover, let us introduce the operators
\[
B_{\xi}:H^s_{\gamma}(\Omega) \to H^{s-2}_{\gamma}(\Omega)\qquad 
B_{\xi} u := \Delta_{\xi}u + \omega^2\epsp u 
\]
for $\xi\in\C$, which are well defined for any $s\in [0,2]$. 
For a review of the properties of holomorphic extensions of Floquet transformed
periodic differential operators in a much greater generality we
refer to \cite{Kuchment_Floquet}.

$\xi_0\in\C$ is called a \emph{characteristic value} of
$\xi\mapsto B_{\xi}$ if $B_{\xi_0}$ is not injective.
If $\xi_0$ is a characteristic value of $B_{\xi}$ for some
choice of $s\in [0,2]$, and $B_{\xi_0}u_0 = 0$ for 
$u_0\in H^s_{\gamma}(\Omega)\setminus\{0\}$, then 
$u_0\in H^2_{\gamma}(\Omega)$ 
by elliptic regularity results, and $\xi_0$ is a characteristic value
of $B_{\xi}$ for any parameter $s\in [0,2]$. Therefore, the set of
characteristic values does not depend on the parameter $s$, and for
studying this set we may choose $s$ at our convenience. 
The operators $B_{\xi}$ are defined such that (with $s=2$):
\begin{remark}\label{rem:char_val_eig}
The following statements are equivalent for $\xi_0\in\C$:
\begin{itemize}
\item
$\xi_0$ is a characteristic value of $(B_{\xi})$.
\item
There exists a  Floquet mode $v(x) = e^{i\xi_0x_1}u(x)$ with
$u\in H^2_{\gamma}(\Omega)$.
\end{itemize}
\end{remark}


It follows from the second statement that:
\begin{remark}\label{rem:periocity_char_val}
If $\xi\in \C$ is a characteristic value of $\xi\mapsto B_{\xi}$, then
$\xi + 2\pi l$ is a characteristic value for all $l\in\Z$. 
\end{remark}

To formulate some results by Gohberg and Sigal \cite{GohbergSigal} in Theorem \ref{theo:rouche}
below, which will serve as an essential tool in the following analysis,
we first have to recall the definition of the multiplicity of characteristic values.
Note that for all $\xi$ the operator $B_{\xi}$ is a compact perturbation of 
the operator $\Delta - 1$ in $\mathcal{L}(H^2_{\gamma}(\Omega),L^2(\Omega))$,
and that $\Delta-1$ has a  bounded inverse. Therefore, by analytic Fredholm theory \cite{steinberg:68} the set of characteristic values of $B_{\xi}$ is discrete.
For the special case at hand the definitions in \cite{GohbergSigal} simplify as follows:

\begin{definition}\label{defi:char_val}
Let $\mathcal{B}_1$ and $\mathcal{B}_2$ be Banach spaces and $\xi\mapsto B_{\xi}$
a holomorphic mapping defined on a domain in $\C$ with values in 
$\mathcal{L}(\mathcal{B}_1,\mathcal{B}_2)$.
Moreover, let $B_{\xi}= B+K_{\xi}$ where $B$ has a bounded inverse in 
$\mathcal{L}(\mathcal{B}_2,\mathcal{B}_1)$ and $K_{\xi}$ is compact for all $\xi$.

The point $\xi_0\in \C$ is called a \emph{characteristic value} 
of $(B_{\xi})$ if
there exists a holomorphic function $\xi\mapsto u_{\xi}$ (called \emph{root function})
with values in $\mathcal{B}_1$ such that $u_{\xi_0}\neq 0$ and $B_{\xi_0}u_{\xi_0}=0$. 
(Note that such root functions, which may be chosen constant, exist if and only if
$B_{\xi_0}$ is not injective.) 
The \emph{multiplicity} of a root function $(u_{\xi})$
is the order of $\xi_0$ as a root of $\xi\mapsto B_{\xi}u_{\xi}$. 
$\overline{u} \in \mathcal{B}_1$ is called an \emph{eigenvector} of $(B_{\xi})$
corresponding to $\xi_0$ if $\overline{u}=u_{\xi_0}$ for some root function
$(u_{\xi})$ of $(B_{\xi})$ corresponding to $\xi_0$.
The \emph{rank} $\rank(\overline{u})$ of an eigenvector $\overline{u}$ is defined 
as the maximum of all the multiplicities of root functions $(u_{\xi})$ with 
$\overline{u}=u_{\xi_0}$. (Under the given assumptions the geometric multiplicity
$\alpha:=\dim \ker B_{\xi_0}$ of $\xi_0$ and the ranks
of all eigenvectors $\overline{u}\in \ker B_{\xi_0}$  are finite, 
see \cite[Lemma 2.1]{GohbergSigal}.)

A \emph{canonical system of eigenvectors} of $(B_{\xi})$ corresponding to
$\xi_0$ is defined as a basis $\{u^{(1)}$,...,$u^{(\alpha)}\}$ of $\ker B_{\xi_0}$ with
the following properties: $\rank u^{(1)}$ is the maximum of
the ranks of all eigenvectors corresponding to $\xi_0$, and $\rank
u^{(j)}$ for $j=2,...,\alpha$ is the maximum of the ranks of all
eigenvectors in some direct complement of
$\mathrm{span}\{u^{(1)},...,u^{(j-1)}\}$ in $\ker B_{\xi_0}$.
The numbers $r_j=\rank u^{(j)}$ ($j=1,...,\alpha$)  are called the
\emph{partial null multiplicities} of the characteristic value $\xi_0$, 
and $\mathfrak{n}((B_{\xi});\xi_0)=(r_1,r_2,...,r_\alpha)$ the $\alpha$-tuple
of partial null multiplicities. We call
$\mathfrak{N}((B_{\xi});\xi_0)=r_1+r_2+...+r_\alpha$ the (total) null multiplicity of the
characteristic value $\xi_0$ of $(B_\xi)$.

Finally, we call  \emph{canonical Jordan chains} associated to the
characteristic value 
$\xi_0$ any sets $\{u_0^{(j)},...,u_{r_j-1}^{(j)}\}$, $1\leq j\leq\alpha$, of vectors
in $\mathcal{B}_1$ such that $\{u_0^{(1)},...,u_0^{(\alpha)}\}$ is a canonical
system of eigenvectors and $\xi_0$ is a root of order $r_j$ of
$\xi\mapsto B_\xi(\sum_{k=0}^{r_j-1} (\xi-\xi_0)^k u_k^{(j)})$ for any $1\leq j\leq\alpha$.
\end{definition}

Let $\Gamma$ be a simple, closed, rectifiable contour  
contained in the domain of analyticity of $(B_{\xi})$ and of $(B_{\xi})^{-1}$ 
and let $\xi_1,\xi_2,...,\xi_n$ be the characteristic values of $(B_{\xi})$
enclosed by $\Gamma$. (Recall that simple means that $\Gamma$ has a
continuous, bijective parametrization $\gamma:[0,\pi)\to \Gamma$.) Then we set
\begin{equation}\label{GohSigCount}
\mathfrak{N}((B_\xi);\Gamma):=\sum_{j=1}^n \mathfrak{N}((B_{\xi});\xi_j).
\end{equation}
We cite the following generalization of Rouch\'e's Theorem: 
\begin{theorem}[{\cite{GohbergSigal}}]\label{theo:rouche}
Assume that $(A_{\xi})$ satisfies the assumptions of Definition \ref{defi:char_val}
and let $\Gamma$ be a simple, closed,  rectifiable contour 
in the domain of analyticity of $(A_{\xi})$ and of $(A_{\xi})^{-1}$. 
If $\xi\mapsto S_{\xi}$ is another holomorphic function defined on the same
domain as $(A_{\xi})$ with values in 
$\mathcal{L}(\mathcal{B}_1,\mathcal{B}_2)$ and if
$$\|A^{-1}_{\xi}S_{\xi}\|<1\qquad\mbox{for all }\xi\in\Gamma,
$$
then $(A_{\xi}+S_{\xi})^{-1}$ is analytic in some neighborhood of $\Gamma$ and
\begin{equation}\label{eq:rouche_log_res}
 \mathfrak{N}((A_{\xi}+S_{\xi});\Gamma)= \mathfrak{N}((A_{\xi});\Gamma)\,. 
\end{equation}
\end{theorem}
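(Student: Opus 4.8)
The plan is to prove the two equalities in \eqref{eq:rouche_log_res} separately. The right-hand equality is the operator-valued argument principle, while the left-hand equality, together with the analyticity of $(A_{\xi}+S_{\xi})^{-1}$ near $\Gamma$, is the Rouch\'e-type stability statement. The technical heart of both is a \emph{local factorization} of $(A_{\xi})$ at each of its characteristic values. First I would reduce to the standard form: writing $A_{\xi}=A(I+A^{-1}K_{\xi})$, the factor $T_{\xi}:=I+A^{-1}K_{\xi}$ is an identity-plus-compact holomorphic family on $\mathcal{B}_1$, invertible wherever $A_{\xi}$ is, and since $A$ is holomorphic and boundedly invertible one has $\mathfrak{N}((A_{\xi});\xi_0)=\mathfrak{N}((T_{\xi});\xi_0)$ at every point. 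By analytic Fredholm theory the characteristic values of $(T_{\xi})$ are isolated and $\ker T_{\xi_0}$ is finite-dimensional, so near a fixed characteristic value $\xi_0$ the problem compresses to a holomorphic \emph{matrix} function on a fixed finite-dimensional complement of a subspace on which $T_{\xi}$ stays invertible. The main obstacle is to produce, for this compressed matrix function, the local Smith factorization
\[
A_{\xi}=E_{\xi}\,D_{\xi}\,F_{\xi},\qquad
D_{\xi}=P_0+\sum_{j=1}^{\alpha}(\xi-\xi_0)^{r_j}P_j,
\]
with $E_{\xi},F_{\xi}$ holomorphic and boundedly invertible near $\xi_0$, mutually disjoint rank-one projections $P_1,\dots,P_{\alpha}$, $P_0=I-\sum_j P_j$, and exponents $r_j$ equal to the partial null multiplicities of Definition~\ref{defi:char_val}. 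This is obtained by induction on the matrix size, at each step extracting a factor carrying the minimal order of vanishing and pivoting by holomorphic invertible elementary operations; the bookkeeping of the orders is the delicate part.

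Granting the factorization, I would derive the argument principle by a direct residue computation. Differentiating $A_{\xi}=E_{\xi}D_{\xi}F_{\xi}$ and using the Leibniz rule gives
\[
A_{\xi}'A_{\xi}^{-1}=E_{\xi}'E_{\xi}^{-1}
+E_{\xi}\paren{D_{\xi}'D_{\xi}^{-1}}E_{\xi}^{-1}
+E_{\xi}D_{\xi}\paren{F_{\xi}'F_{\xi}^{-1}}D_{\xi}^{-1}E_{\xi}^{-1}.
\]
The first term is holomorphic near $\xi_0$ and contributes nothing to the contour integral; by cyclicity of the trace the third term reduces under $\tr$ to $\tr(F_{\xi}'F_{\xi}^{-1})$, again holomorphic; and the middle term reduces to $\tr(D_{\xi}'D_{\xi}^{-1})$. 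Since $P_iP_j=\delta_{ij}P_j$ and $\tr P_j=1$, one computes $D_{\xi}'D_{\xi}^{-1}=(\xi-\xi_0)^{-1}\sum_j r_jP_j$, so the local logarithmic residue equals $\sum_j r_j=\mathfrak{N}((A_{\xi});\xi_0)$. This computation also exhibits the residue of $A_{\xi}'A_{\xi}^{-1}$ at $\xi_0$ as a finite-rank operator, so $\frac{1}{2\pi i}\oint_{\Gamma}A_{\xi}'A_{\xi}^{-1}\,d\xi$ is finite-rank (the integrand being holomorphic away from the finitely many enclosed characteristic values) and its trace is well defined. Summing the local contributions over the characteristic values enclosed by $\Gamma$ yields the right-hand equality of \eqref{eq:rouche_log_res}.

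For the Rouch\'e part I would first note that $\|A_{\xi}^{-1}S_{\xi}\|<1$ on $\Gamma$ makes $I+A_{\xi}^{-1}S_{\xi}$ invertible there by a Neumann series, so $A_{\xi}+S_{\xi}=A_{\xi}(I+A_{\xi}^{-1}S_{\xi})$ is invertible on $\Gamma$; since $(A_{\xi})^{-1}$ is analytic near $\Gamma$ and $\Gamma$ is compact, the strict bound persists on a neighborhood, giving analyticity of $(A_{\xi}+S_{\xi})^{-1}$ there. The key algebraic fact is the \emph{multiplicativity} of the logarithmic residue: for holomorphic Fredholm families $B_{\xi},C_{\xi}$ invertible on $\Gamma$, expanding $(B_{\xi}C_{\xi})'(B_{\xi}C_{\xi})^{-1}$ by Leibniz and using cyclicity of the trace gives $\mathfrak{N}((B_{\xi}C_{\xi});\Gamma)=\mathfrak{N}((B_{\xi});\Gamma)+\mathfrak{N}((C_{\xi});\Gamma)$. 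Applying this with $B_{\xi}=A_{\xi}$ and $C_{\xi}=I+A_{\xi}^{-1}S_{\xi}$ reduces the claim to $\mathfrak{N}((C_{\xi});\Gamma)=0$, which I would prove by the homotopy $C_{\xi}^{(t)}:=I+tA_{\xi}^{-1}S_{\xi}$, $t\in[0,1]$: each $C_{\xi}^{(t)}$ is invertible on $\Gamma$ since $\|tA_{\xi}^{-1}S_{\xi}\|<1$ there, so $t\mapsto\mathfrak{N}((C_{\xi}^{(t)});\Gamma)=\frac{1}{2\pi i}\tr\oint_{\Gamma}(C_{\xi}^{(t)})'(C_{\xi}^{(t)})^{-1}\,d\xi$ is integer-valued and continuous, hence constant, and equals $0$ at $t=0$ where $C_{\xi}^{(0)}=I$. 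The whole difficulty is thus concentrated in the local Smith factorization; once it is available, both the argument principle and the Rouch\'e stability follow from the Leibniz rule and the cyclicity of the trace.
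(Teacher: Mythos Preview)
The paper does not prove this theorem; it is quoted from \cite{GohbergSigal} as a black box (note the sentence ``We cite the following results:'' immediately preceding the statement). So there is no in-paper argument to compare your proposal against. That said, your outline is essentially the original Gohberg--Sigal strategy: reduce to an identity-plus-compact family, establish the local Smith factorization $A_\xi=E_\xi D_\xi F_\xi$ with exponents equal to the partial null multiplicities, read off the logarithmic residue via Leibniz and cyclicity of the trace, and then obtain the Rouch\'e half from multiplicativity of the trace integral together with the linear homotopy $t\mapsto I+tA_\xi^{-1}S_\xi$.

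One point in your Rouch\'e step should be tightened. You write $C_\xi=I+A_\xi^{-1}S_\xi$ and invoke multiplicativity ``for holomorphic Fredholm families $B_\xi,C_\xi$'', but $C_\xi$ is only \emph{meromorphic} inside $\Gamma$ (it has poles precisely at the characteristic values of $(A_\xi)$), so it does not fit Definition~\ref{defi:char_val} and the quantity $\mathfrak{N}((C_\xi);\Gamma)$ in the sense of \eqref{GohSigCount} is not directly defined for it. The repair is cosmetic: carry out the Leibniz identity $(A_\xi C_\xi)'(A_\xi C_\xi)^{-1}=A_\xi'A_\xi^{-1}+A_\xi(C_\xi'C_\xi^{-1})A_\xi^{-1}$ and the trace-cyclicity step \emph{pointwise on $\Gamma$}, where every factor is holomorphic and invertible, and treat each summand simply as a contour integral. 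Then apply the already-proved argument principle separately to the two \emph{holomorphic} families $(A_\xi)$ and $(A_\xi+S_\xi)$ to convert their trace integrals into $\mathfrak{N}$'s; your homotopy still shows that $\frac{1}{2\pi i}\tr\oint_\Gamma C_\xi'C_\xi^{-1}\,d\xi=0$. A related technicality you pass over is that $(A_\xi+S_\xi)$ must itself be shown to be ``normal with respect to $\Gamma$'' (in particular, to have only isolated characteristic values of finite null multiplicity inside $\Gamma$) before the argument principle can be applied to it; in \cite{GohbergSigal} this is part of the conclusion of their Theorem~2.2 and is obtained from the local factorization of $(A_\xi)$.
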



We can now state the following generalization of Remark \ref{rem:char_val_eig}:
\begin{proposition}\label{prop:char_val_eig}
Suppose that $\xi_0$ is a characteristic value of $(B_{\xi})$ with partial
null multiplicities
$\mathfrak{n}((B_{\xi});\xi_0)=(r_1,r_2,...,r_\alpha)$. Then, 
the vector space $\mathcal{V}_{\xi_0}$ of all  Floquet modes with 
quasi momentum $\xi_0$ is a direct sum of $\mathcal{T}$-invariant subspaces 
$\mathcal{V}_{\xi_0,j}$ of dimensions $r_j\in\N$, $j=1,\dots,\alpha$. 
Moreover, there exists a basis $\{v_{j,0},\dots,v_{j,r_j-1}\}$ of $\mathcal{V}_{\xi_0,j}$ with respect to which $\mathcal{T}:\mathcal{V}_{\xi_0,j}
\to \mathcal{V}_{\xi_0,j}$ is represented
by a single $r_j\times r_j$ Jordan block, and $v_{j,k}$ are Floquet modes  of 
order $k$. Moreover, in the expansion 
\begin{equation}\label{eq:formFloquetBasis}
v_{j,k}(x) = e^{i\xi_0 x_1}\sum_{l=0}^k \frac{(ix_1)^{k-l}}{(k-l)!} u^{(j,k)}_l(x),
\qquad k=0,\dots,r_j-1 
\end{equation}
we have $u^{(j,0)}_0=\cdots=u^{(j,r_j-1)}_{r_j-1}=\tilde{u}^{(j)}_0$, 
and $\{\tilde{u}^{(1)}_0,\dots,\tilde{u}^{(\alpha)}_0\}$ is a basis of 
$\mathrm{ker}(B_{\xi_0})$.
\end{proposition}
\begin{proof}
From \cite[{\S} 3.4.3]{B:nazarov_plamenevsky} we know that there exist canonical Jordan
chains $\{\tilde{u}_0^{(j)},...,\tilde{u}_{r_j-1}^{(j)}\}$, $1\leq j\leq\alpha$ associated to $\xi_0$, and
$\mathcal{V}_{\xi_0}=\oplus_{j=1}^\alpha\mathcal{V}_{\xi_0,j}$ where
$\mathcal{V}_{\xi_0,j}=\mathrm{span}\{\tilde{v}_{j,k}:\,0\leq k\leq r_j-1\}$ and
\begin{equation}\label{eq:defi_vj}
\tilde{v}_{j,k}=e^{i\xi_0x_1}\sum_{l=0}^k\frac{(ix_1)^{k-l}}{(k-l)!}\tilde{u}^{(j)}_l.
\end{equation}
Since
\begin{align*}
(\mathcal{T}\tilde{v}_{j,k})(x)=&
e^{i\xi_0(x_1+1)}\sum_{l=0}^k\frac{(ix_1+i)^{k-l}}{(k-l)!}\tilde{u}^{(j)}_l(x)\\
=&
e^{i\xi_0}e^{i\xi_0x_1}\sum_{l=0}^k
\sum_{m=0}^{k-l}\frac{i^{m}}{m!}\frac{(ix_1)^{k-l-m}}{(k-l-m)!}
\tilde{u}^{(j)}_l(x)\\
=&
e^{i\xi_0}\sum_{m=0}^k\frac{i^{m}}{m!}
e^{i\xi_0x_1}\sum_{l=0}^{k-m}\frac{(ix_1)^{k-m-l}}{(k-m-l)!}
\tilde{u}^{(j)}_l(x)\\
=&
e^{i\xi_0}\sum_{m=0}^k\frac{i^{k-m}}{(k-m)!}\tilde{v}_{j,m}(x),
\end{align*}
the subspaces $\mathcal{V}_{\xi_0,j}$ are $\mathcal{T}$-invariant and  $\mathcal{T}$ is
represented by an upper triangular Toeplitz matrix $M_{\xi_0,j}$ with
respect to the basis $(\tilde{v}_{j,k})_{0\leq k\leq r_j-1}$ of $\mathcal{V}_{\xi_0,j}$.
Since the entries of the first upper diagonal of $M_{\xi_0,j}$ do not vanish, 
we have $(M_{\xi_0,j}-e^{i\xi_0}I)^l=0$ if and only if $l\geq r_j$, so 
$M_{\xi_0,j}$ is similar to a Jordan block $J_j$ 
of size $r_j$ with diagonal values $\exp(i\xi_0)$. It is easy to see by induction in $r_j$ 
that this similarity transform can be achieved by an upper triangular matrix 
$D$ with $1$'s on the diagonal, i.e.\ $D^{-1}M_{\xi_0,j}D=J_j$. 
Then $v_{j,k} := \tilde{v}_{j,k} + \sum_{l=0}^{k-1}D_{l,k}\tilde{v}_{j,l}$
is a Floquet mode of order $k$ of the form \eqref{eq:formFloquetBasis} 
with $u_0^{(j,k)}=\tilde{u}_0^{(j)}$, and $\mathcal{T}$ is represented by 
$J_j$ on the subspace $\mathcal{V}_{\xi_0,j}$ 
with respect to the basis $\{v_{j,0},\dots,v_{j,r_j-1}\}$ of
$\mathcal{V}_{\xi_0,j}$.
\end{proof}

\subsection{Outline of the proof of Theorem \ref{theo:main}}
We amend the finite number of right propagating Floquet modes 
$\vp_1,\dots, \vp_{\overline{n}}$ which have been described in 
section \ref{sec:the_problem} by an infinite number of decaying
Floquet modes $\vp_n$, $n=\overline{n}+1,\overline{n}+2,\dots$, 
which are chosen according to Proposition \ref{prop:char_val_eig}
for each characteristic value of $(B_{\xi})$ with positive imaginary part. 
These $\vp_n$ are arranged in increasing order of the imaginary part
of the corresponding characteristic values and will be properly
normalized. The set $\{\vp_n:n\in\mathbb{N}\}$ is a Riesz basis of $V$ 
if and only if the operator
\begin{align} \label{eq:defi_T}
T:&\, l^2(\N) \to H^{1,+}_{\gamma}(S^+) ,& (a_n)\mapsto \sum_{n=1}^{\infty}a_n \vp_n
\end{align}
is a norm isomorphism from 
$l^2(\N)$ to $V$. Our strategy is to compare $T$ with a reference operator
$\Tref$ corresponding to the case $\omega=0$, which we will refer to as 
the unperturbed case.
In the unperturbed case all interesting quantities can easily be computed
analytically. We will proceed as follows:
\begin{itemize}
	\item In section \ref{sec:char_val} we establish the existence of a countable 
	number of characteristic values and derive precise estimates on the difference of  
	characteristic values with large imaginary parts  
	in the perturbed and the unperturbed case using Theorem \ref{theo:rouche}. 
	\item In section \ref{sec:synthesis} we show that $T-\Tref$ is compact by estimating 
	the perturbation of eigenvectors of $(B_{\xi})$ for $\xi$ with large imaginary parts. 
	Moreover, we show that $\Tref$ is a norm isomorphism from $l^2(\mathbb{N})$ to
	its range. 
  \item In section \ref{sec:main_thm_proof} we show for trace operators $\tau$, which  
  satisfy the well-posedness assumption in Theorem \ref{theo:main},
  that $\tau \Tref:l^2(\mathbb{N})\to H_{\gamma}^{\tau}$ is a norm isomorphism. 
  Moreover, we show injectivity of $T$. Together with Riesz theory and 
  the well-posedness assumption this readily implies that both 
  $\tau T:l^2(\mathbb{N})\to H_{\gamma}^{\tau}$ and $T:l^2(\mathbb{N}\to V$ are norm
  isomorphisms. Moreover, by construction (see Proposition \ref{prop:char_val_eig})
  both the operators $\mathcal{R}$ and $\mathcal{T}$ are represented by Jordan 
  matrices. 
\end{itemize}

\section{Estimates of characteristic values}\label{sec:char_val}
\subsection{Characteristic values for $\omega=0$}
Recall from \S \ref{sec:spectra} that $\kappa_{n}^2$ are the eigenvalues of 
the negative Laplacian on the cross section with boundary conditions $\gamma$ and set 
\[
\xi_{m,n}:= -2\pi m + i\kappa_{n},\qquad m\in \Z,\;n\in\N\,.
\]

\begin{lemma}\label{lemm:char_val_Delta}
The characteristic values of $(\Delta_{\xi})$ are precisely the numbers $\xi_{m,n}$ 
and $\overline{\xi_{m,n}}$ (counted with their total multiplicities if these
numbers are not distinct) with $m\in Z$ and $n\in \N$. 
If $\kappa_n>0$ all partial null multiplicities are $1$ and if $\kappa_n=0$ the 
partial null multiplicity is $2$.  An eigenvector corresponding to both
 $\xi_{m,n}$ and $\overline{\xi_{m,n}}$
is given by the function $\varphi_{m,n}$ defined in \eqref{genBasis}.
\end{lemma}

\begin{proof}
Note that $-\Delta \varphi_{m,n} = ((2\pi m)^2 + \kappa_{n}^2)\varphi_{m,n} =
|\xi_{m,n}|^2\varphi_{m,n}$ for $(m,n)\in \Z\times \N$ and 
define $\widehat{u}(m,n):=\lsp u,\varphi_{m,n}\rsp$. 
Due to \eqref{eq:defi_Delta_xi} we have
\begin{equation}\label{eq:Delta_hat}
\begin{split}
-\widehat{(\Delta_\xi u)}(l) 
&= 
- (\Delta + 2i\xi (2\pi i m) -\xi^2)\widehat{u}(l) 
= (|\xi_l|^2  + 2\xi 2\pi m + \xi^2)\widehat{u}(l)\\
&= (\xi-\xi_{l})(\xi-\overline{\xi_{l}})\,\widehat{u}(l),\qquad 
l=(m,n)\in \Z\times \N.
\end{split}
\end{equation}
Since $\{\varphi_l:l\in \Z\times \N\}$ is an orthonormal basis
of $L^2(\Omega)$, this shows that the problem separates into a countable set
of scalar problems indexed by $l=(m,n)$, each of which has the two simple
characteristic values $\xi_l$ and $\overline{\xi_l}$ if $\kappa_n>0$ and
the characteristic value $0$ of multiplicity $2$ if $\kappa_n=0$.
\end{proof}

Since the characteristic values are always
$2\pi$-periodic, also for $\omega>0$ (see Remark \ref{rem:periocity_char_val}), 
it suffices to
study the characteristic values in the strip $\{z\in\C:-\pi \leq \Re z <\pi\}$.
Therefore we define 
\[
\xiup_n=i\kappa_n= \xi_{0,n},\qquad n\in\N
\]
as ``reference characteristic values''. Note that $\xiup_n$ corresponds to 
the physical exponentially decaying Floquet mode 
$\exp(i\xiup_n x_1)=\exp(-\kappa_n x_1)$ 
if $\kappa_n>0$ whereas $\overline{\xiup_n}$ corresponds to the unphysical exponentially growing Floquet mode $\exp(i\overline{\xiup_n}x_1)= \exp(\kappa_n x_1)$.

\subsection{Estimates of ``large'' characteristic values for Dirichlet boundary conditions}
For the sake of clarity we first prove estimates on the location of 
characteristic values for the case of Dirichlet boundary conditions 
$\gamma = \gamma_{\rm D}$ where
$\kappa_n = \frac{\pi n}{L}$ before treating general boundary conditions. 
The main tool will be Theorem \ref{theo:rouche}. 

We define open discs $\mathcal{D}_n$ and closed rectangles $\mathcal{R}_n$ by 
\begin{align}
\label{eq:defi_disk}
\mathcal{D}_n &:= \left\{z\in \C: |z-\xiup_n|< \frac{2\omega^2\bar\varepsilon}{\kappa_{\nind}+\kappa_{\nind-1}}\right\},\\
\mathcal{R}_n &:= \left\{z\in \C: |\Re z|\leq \pi, |\Im z-\kappa_n| \leq \frac{\pi}{2L}\right\}\,.
\end{align}
The radii of these discs are chosen such that $\|\Delta_{\xi}^{-1}\omega^2\eps_p\|_{\calL(L^2(\Omega))}<1$ for all 
$\xi \in \partial\mathcal{D}_n$ as shown later. Moreover, we choose $N\in \N$  
such that all disks $\mathcal{D}_n$ with $n\geq N$ are mutually disjoint and contained in the strip $\{z\in\C:|\Re z|<\pi\}$, i.e.\ 
\begin{equation}\label{eq:setS_N}
N\geq \min\braces{\nind\in\N :\frac{\kappa_{\nind}+\kappa_{\nind-1}}{2}> 
\frac{\omega^2\bar\varepsilon\max(1,L)}{\pi}}\,.
\end{equation}

\begin{proposition}\label{prop:Diri_char_val_eig}
For Dirichlet boundary conditions we have 
\[
\mathfrak{N}((B_\xi),\partial D_n) = 1=\mathfrak{N}((B_\xi),\partial R_n) 
\qquad \mbox{for all }n\geq N\,.
\]
In particular, all characteristic values of $(B_{\xi})$ in the half-strip
$\{z\in\C: -\pi\leq \Re z< \pi, \Im z>\kappa_N-\pi/(2L)\}$ are simple
and contained in one of the disks $\mathcal{D}_n$. 
\end{proposition}

\begin{proof}
For each $n\geq N$ we apply Theorem \ref{theo:rouche} with 
$A_{\xi} = \Delta_{\xi}\in \calL(L^2(\Omega),H^{-2}_{\gamma}(\Omega))$
and $S_{\xi} = B_{\xi}-\Delta_{\xi} =  \omega^2\eps_p$. We will show that 
\begin{equation}\label{eq:aux_loc_char_val}
\|\Delta_{\xi}^{-1} \omega^2\eps_p\|_{\calL(L^2(\Omega))}<1
\qquad \xi\in \mathcal{R}_n \setminus \mathcal{D}_n \,.
\end{equation}
Then  Theorem \ref{theo:rouche} is applicable with 
$\Gamma=\partial \mathcal{D}_n$ and yields 
$\mathfrak{N}((B_\xi),\partial D_n) = \mathfrak{N}((\Delta_\xi),\partial D_n) = 1$
(with the help of Lemma \ref{lemm:char_val_Delta}), and it follows from 
the Neumann series or another application
of Theorem \ref{theo:rouche} with $\Gamma = \partial \mathcal{R}_n$ that there are
no characteristic values of $(B_{\xi})$ in $\mathcal{R}_n \setminus \mathcal{D}_n$. This proves all claims. 

Since 
$\|\omega^2\varepsilon_p\|_{\calL(L^2(\Omega))}
= \omega^2\bar\varepsilon$, eq.~\eqref{eq:aux_loc_char_val} holds true 
if we can show that
\[
\|\Delta_{\xi}^{-1}\|_{\calL(L^2(\Omega))}
<\frac{1}{\omega^2\bar\varepsilon},
\qquad \xi\in \mathcal{R}_n \setminus \mathcal{D}_n\,.
\]
By virtue of \eqref{eq:Delta_hat} this is equivalent to
\[
\sup\braces{\frac{1}{|\xi-\xi_{l}|\,|\xi-\overline{\xi_{l}}|}  
: l\in \Z\times \N}
<\frac{1}{\omega^2\bar\varepsilon},\qquad 
\xi\in \mathcal{R}_n \setminus \mathcal{D}_n \,.
\]
Note that 
\begin{align}\label{eq:alt_def_R}
\mathcal{R}_n =\braces{\xi \in\C: \forall l\in \Z\times \N \;|\xi-\xi_{0,n}|\leq |\xi-\xi_l|}\,,
\end{align} 
so $|\xi-\xi_l|\geq |\xi-\xi_{0,n}|\geq 
\frac{2\omega^2\bar\varepsilon}{\kappa_{\nind}+\kappa_{\nind-1}}$ for all
$\xi\in \mathcal{R}_n \setminus \mathcal{D}_n$. Together
with the inequality $|\xi-\overline{\xi_l}|\geq |\Im \xi-\Im \overline{\xi_l}|
>\Im\xi \geq \frac{1}{2}(\kappa_n+\kappa_{n-1})$ we obtain the desired estimate
\begin{align*}
|\xi-\xi_l|\,|\xi-\overline{\xi_l}|>& 
\omega^2 \bar\varepsilon\qquad \mbox{for all } l\in \Z\times \N
\end{align*}
(see Fig.~\ref{subfig:diri}). 
Since the supremum over $l$ above is obviously attained, we get a strict inequality.
\end{proof}

\subsection{Estimates of ``large'' characteristic values for general boundary conditions}
For general lateral boundary conditions, in particular quasi-periodic boundary
conditions, the eigenvalues $\kappa_n$ may come in pairs which are arbitrarily
close together or coincide. Recall, however, that we have $\kappa_{n+2} = 
\kappa_n+\frac{2\pi}{L}$ for all $n\in\N$ and for all trace operators 
$\gamma$. We assign to each index $n$ its group $G(n)$ as follows:
$G(n):=\{n\}$ if $\kappa_{n+1}-\kappa_n = \frac{\pi}{L}$, i.e.\ if the $\kappa_n$ are
equidistant,  which is the case for $\gamma\in\{\gamma_{\rm D},\gamma_{\rm N},\gamma_{\rm DN}\}$. If $\kappa_{n+1}-\kappa_n<\frac{\pi}{L}$, then 
$G(n):=\{n,n+1\}$ and if $\kappa_{n+1}-\kappa_n>\frac{\pi}{L}$, then
$G(n):=\{n,\max(n-1,1)\}$.

To each group we assign a set 
\begin{align}\label{eq:defi_RGn}
\mathcal{R}_{G(n)}:=\{\xi\in\C:\forall l\in \Z\times \N\; \min_{n'\in G(n)}|\xi-\xiup_n| \leq |\xi-\xi_l|\}\,,
\end{align}
which is a closed rectangle since the $\xi_l$ form a rectangular grid
(see Fig.~\ref{subfig:set_S}). 
As remarked in \eqref{eq:alt_def_R}, these rectangles coincide with the ones defined
in the previous subsection if $\gamma=\gamma_{\rm D}$. Moreover, we introduce
open disks, which again coincide with those of the previous subsection for
$\gamma=\gamma_{\rm D}$:
\[
\mathcal{D}_n := \left\{z\in \C: |z-\xiup_n|< \frac{\omega^2\bar\varepsilon}{\inf_{z\in \mathcal{R}_{G(n)}}\Im z}\right\}
\]
\begin{psfrags}
\psfrag{0}[][][0.8]{$0$}
\psfrag{Rez}[][][0.8]{$\Re z$}
\psfrag{Imz}[][][0.8]{$\Im z$}
\psfrag{p}[][][0.8]{$\pi$}
\psfrag{2p}[][][0.8]{$2\pi$}
\psfrag{-p}[][][0.8]{$-\pi$}
\psfrag{-2p}[][][0.8]{$-2\pi$}
\begin{figure}[ht]
\subfigure[\label{subfig:diri} Proof of Proposition \ref{prop:Diri_char_val_eig}]{
\psfrag{D}[][][0.8]{$\mathcal{D}_n$}
\psfrag{Rn}[][][0.8]{$\mathcal{R}_n$}
\psfrag{xi}[][][0.8]{$\xi$}
\psfrag{Rn}[][][0.8]{$\mathcal{R}_n$}
\psfrag{il2}[][][0.8]{$\xi_{0,n}$}
\psfrag{mil2}[][][0.8]{$\overline{\xi_{0,n}}$}
\includegraphics[width=0.39\textwidth]{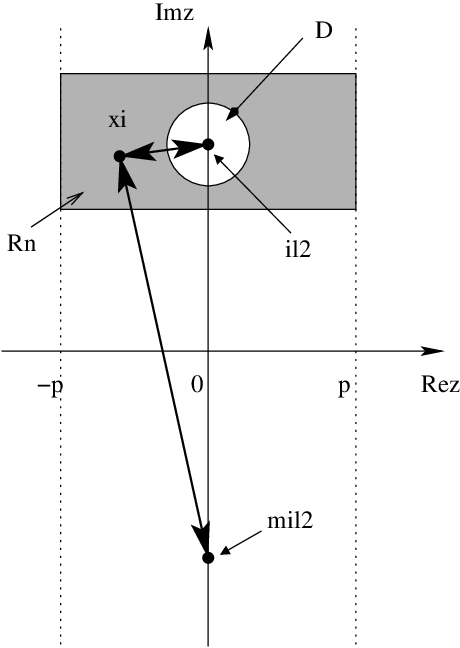}}
\subfigure[\label{subfig:set_S}Proposition \ref{prop:char_val_eig}]{
\includegraphics[width=0.6\textwidth]{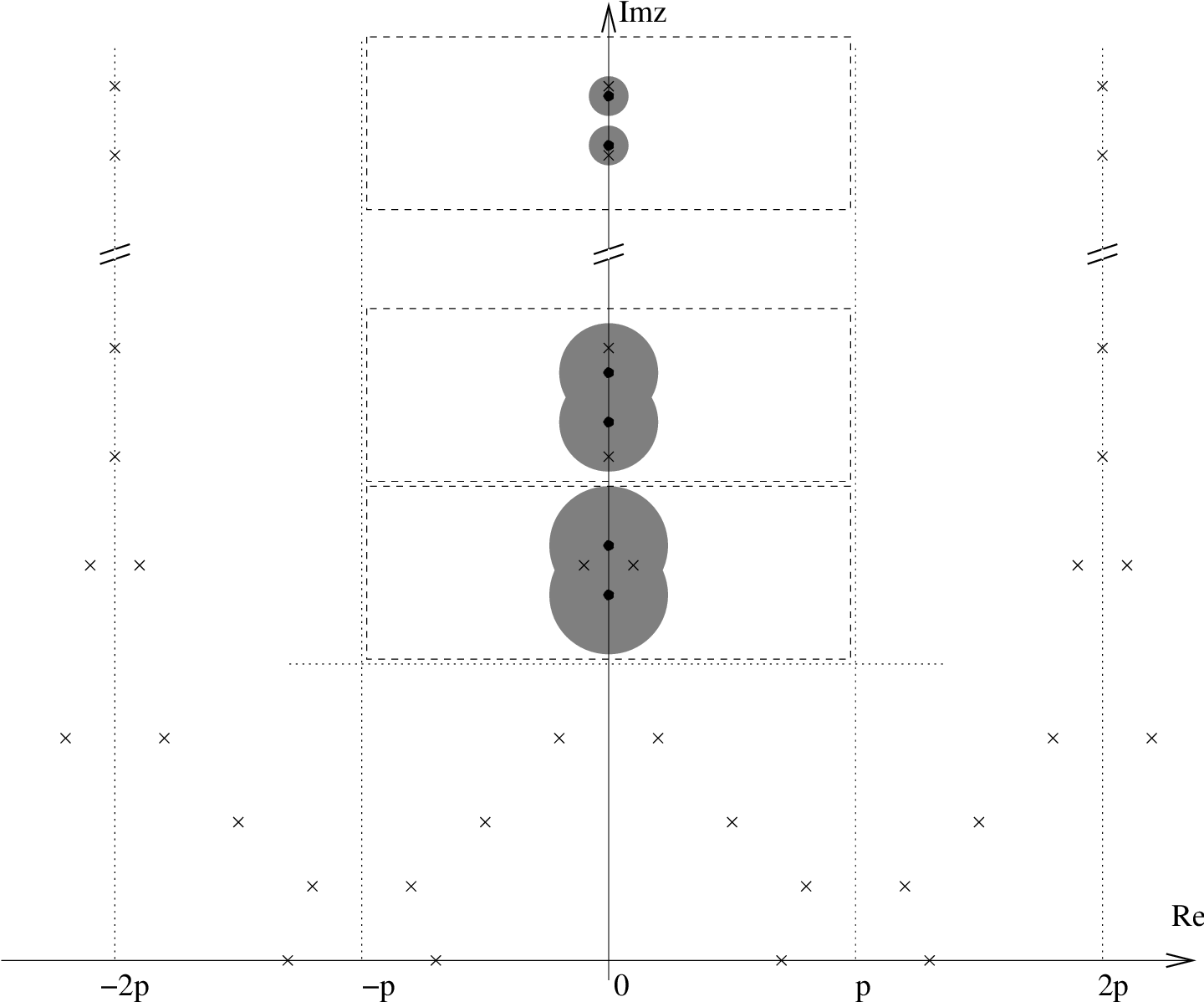}}\\
\caption{
Panel (a): An important ingredient of the proof dictating our definition
of the geometry are uniform lower bounds on products of the distances indicated
by fat double arrows.\newline
Panel (b): The shaded area shows the set $\mathcal{S}$ defined in 
\eqref{eq:setS} in the case of quasi-periodic boundary conditions.
The crosses indicate the characteristic values of $(B_\xi)$,
and the dots indicate some of the reference 
characteristic values  $(\Delta_\xi)$. 
The dashed lines show the boundaries of rectangles $\mathcal{R}_{G(n)}$ defined
in \eqref{eq:defi_RGn}.}
\end{figure}
\end{psfrags}

\begin{proposition}\label{prop:large_char_val}
Define the set
\begin{equation}\label{eq:setS}
\mathcal{S}:= \bigcup_{\nind\geq N} \mathcal{D}_n
\qquad \mbox{with}\qquad 
N\geq \min\braces{\nind\in\N :\inf_{z\in \mathcal{R}_{G(n)}}\Im z > 
\frac{\omega^2\bar\varepsilon\max(1,L)}{\pi}}
\end{equation}
(see Fig.~\ref{subfig:set_S}).  Then 
all characteristic values of $(B_{\xi})$ in  $\{z\in\C:\Re z\in [-\pi,\pi], 
\Im z\geq \inf_{z\in R_{G(N)}}\Im z\}$ are contained in $\mathcal{S}$ and 
for each connected component $\Scomp$  of $\mathcal{S}$ we have
\begin{equation}
\mathfrak{N}((B_{\xi});\partial \Scomp) 
= \mathfrak{N}((\Delta_{\xi});\partial \Scomp) 
= \#\{n\in \N:\xiup_{n}\in\Scomp\}\,.
\end{equation}
\end{proposition}


\begin{proof}
The proof is analogous to that of Proposition \ref{prop:Diri_char_val_eig}. 
Note that $N$ is chosen
such that $\mathcal{S}\subset \{z\in\C:|\Im z|<\pi\}$ and 
$\bigcup_{n\in G(n)} \mathcal{D}_n \subset \mathcal{R}_{G(n)}$ for
all $n\geq N$. By the same arguments we can show that 
$\|\Delta_{\xi}^{-1} \omega^2\eps_p\|_{\calL(L^2(\Omega))}<1$ for all $\xi \in \mathcal{R}_{G(n)}\setminus \Scomp$ if 
$\Scomp\subset \bigcup_{n\in G(n)} \mathcal{D}_n$. 
\end{proof}

\subsection{Number of ``small'' characteristic values}
Only for very small $\omega$ the argument of Proposition \ref{prop:large_char_val} 
can work for all $n$ since for large $\omega$ some of the discs $\mathcal{D}_n$
overlap with their neighbors. In the following we will determine the number 
of characteristic values with ``small'' imaginary parts.
First we cite the following symmetry result for the set of characteristic values:
\begin{proposition}[{\cite[Theorem 5.3]{GohbergSigal}}]\label{prop:symm} 
For real-valued $\epsilon$ the set of characteristic values of $(B_{\xi})$
is symmetric with respect to the real axis, and moreover
$\mathfrak{N}((B_{\xi});\bar\xi_0)=\mathfrak{N}((B_{\xi});\xi_0)$ for all 
$\xi_0\in\C$. 
\end{proposition}

To count the number of these ``small'' characteristic values in the strip
$\{z\in\C:-\pi\leq \Re(z)<\pi\}$ we surround them by a contour containing 
none of them.
By Proposition \ref{prop:large_char_val} the segment $[z_N,z_N+2\pi]$ with
$z_N:=-\pi+i\inf_{z\in\mathcal{R}_{G(N)}}\Im z$ contains no characteristic 
value.

\begin{figure}[!ht]
\subfigure[\label{subfig:contour_small_char_val}Proposition \ref{prop:firstcharvect}]{
\begin{psfrags}
\psfrag{xn}[][][1]{$\xi_{N-1}^+$}
\psfrag{xn1}[][][1]{$\xi_{N}^+$}
\psfrag{kn}[][][1]{$\xiup_{N-1}$}
\psfrag{kn1}[][][1]{$\xiup_{N}$}
\psfrag{overkn}[][][1]{$\overline{\xiup_{N-1}}$}
\psfrag{overknp}[][][1]{$\overline{\xiup_{N}}$}
\psfrag{minusxin}[][][1]{$\xi_{N-1}^-$}
\psfrag{minusxinp}[][][1]{$\xi_{N}^-$}
\psfrag{p}[][][1]{$\pi$}
\psfrag{-p}[][][1]{$-\pi$}
\psfrag{2p}[][][1]{$2\pi$}
\psfrag{-2p}[][][1]{$-2\pi$}
\psfrag{0}[][][1]{$0$}
\psfrag{zn}[][][1]{$z_N$}
\psfrag{zn2p}[][][1]{$z_N+2\pi$}
\psfrag{znb}[][][1]{$\overline{z_N}$}
\psfrag{P}[][][1]{$P$}
\psfrag{Imz}[][][1]{$\Im z$}
\psfrag{Rez}[][][1]{$\Re z$}
\includegraphics[width=0.55\linewidth]{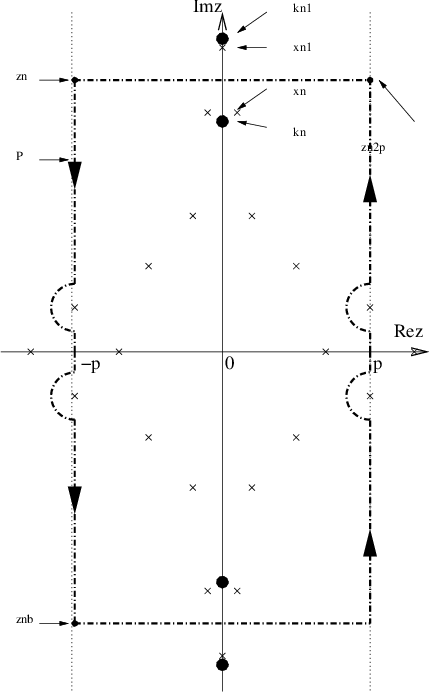}
\end{psfrags}}
\subfigure[\label{subfig:FloquetModes}Lemma \ref{lemm:FloquetModes}]{\begin{psfrags}
\psfrag{S}[][][1]{$\Scomp$}
\psfrag{xi}[][][1]{$\xi$}
\psfrag{xin+1}[][][1]{$\xi_{0,n+1}$}
\psfrag{xin-1}[][][1]{$\xi_{0,n-1}$}
\psfrag{xi-1n}[][][1]{$\xi_{-1,n}$}
\psfrag{delta}[][][1]{$\delta_{\gamma}$}
\psfrag{p}[][][1]{$\pi$}
\psfrag{-p}[][][1]{$-\pi$}
\psfrag{2p}[][][1]{$2\pi$}
\psfrag{-2p}[][][1]{$-2\pi$}
\psfrag{0}[][][1]{$0$}
\psfrag{Imxi}[][][1]{$\Im z$}
\psfrag{Rexi}[][][1]{$\Re z$}
\includegraphics[width=0.45\linewidth]{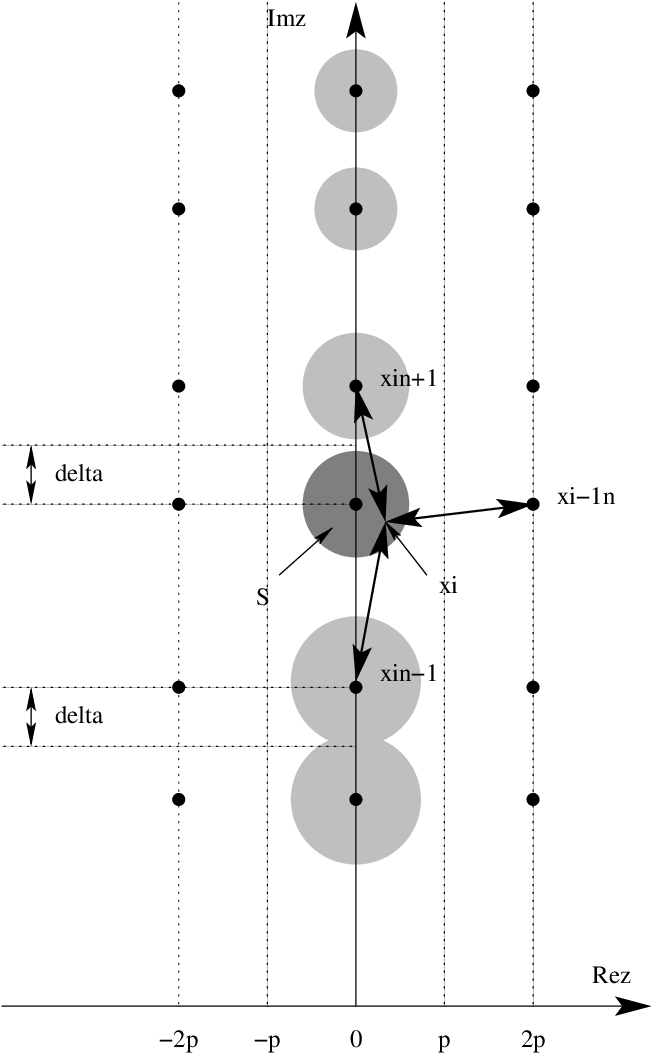}
\end{psfrags}}
\caption{
Panel (a): The dashed line indicated the
integration contour in the proof of Proposition \ref{prop:firstcharvect}. 
Crosses indicate characteristic values of $(B_{\xi})$, circles mark
four of the ``reference'' characteristic values of $(\Delta_{\xi})$.\newline
Panel (b): The proof of Lemma \ref{lemm:FloquetModes} involves lower bounds
of the distances of points $\xi$ in a connected component $\Scomp$ of the set 
$\mathcal{S}$ (the dark shaded region) 
to all ``reference'' characteristic values $\xi_{m,n}$ not
contained in $\Scomp$, see \eqref{eq:aux_as_estim1}.}
\end{figure}

\begin{proposition}\label{prop:firstcharvect}
Choose $z_N$ as above and any complex path $P$ from $z_N$  to $\bar{z}_N$ 
such that $\Gamma:=[z_N+2\pi,z_N]\cup P \cup[\bar{z}_N,\bar{z}_N+2\pi]\cup 
-(2\pi+P)$
encloses precisely all characteristic values of $(B_\xi)$ in the rectangle
$\mathcal{R}:=\{z\in\C: \Re z\in [-\pi,\pi), |\Im z|<|\Im z_N|\}$ 
(see Fig.~\ref{subfig:contour_small_char_val}).  Then
$$\mathfrak{N}((B_\xi);\Gamma)=2N.$$
\end{proposition}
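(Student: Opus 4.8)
The plan is to evaluate $\mathfrak{N}((B_{\xi});\Gamma)$ through the logarithmic residue formula of Theorem~\ref{theo:rouche}, $\mathfrak{N}((B_{\xi});\Gamma)=\frac{1}{2\pi i}\tr\oint_{\Gamma}\partial_{\xi}B_{\xi}\,B_{\xi}^{-1}\,d\xi$, and to use the $2\pi$-periodicity of the characteristic values to strip the integral down to its two horizontal pieces. The starting point is the conjugation identity $B_{\xi+2\pi}=M^{-1}B_{\xi}M$, where $M$ is multiplication by $e^{2\pi i x_1}$, a unitary on $L^2(\Omega)$; this follows from $\Delta_{\xi}u=e^{-i\xi x_1}\Delta(e^{i\xi x_1}u)$ together with the fact that $M$ commutes with multiplication by $\epsp$. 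Since the trace is conjugation invariant, the scalar integrand $g(\xi):=\tr\bigl(\partial_{\xi}B_{\xi}\,B_{\xi}^{-1}\bigr)$ is $2\pi$-periodic. The left and right sides of $\Gamma$ are the path $P$ and its translate $P+2\pi$, traversed in opposite directions, so substituting $\xi\mapsto\xi+2\pi$ and invoking $g(\xi+2\pi)=g(\xi)$ shows that their contributions to $\oint_{\Gamma}g$ cancel. Hence $\mathfrak{N}((B_{\xi});\Gamma)=\frac{1}{2\pi i}\bigl(\int_{\mathrm{top}}+\int_{\mathrm{bottom}}\bigr)g(\xi)\,d\xi$, a quantity that is in particular independent of the choice of $P$.

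Next I would transport this count to the explicitly diagonalizable operator $\Delta_{\xi}$ via the homotopy $B_{\xi}^{(t)}:=\Delta_{\xi}+t\omega^2\epsp$, $t\in[0,1]$. Each $B_{\xi}^{(t)}$ obeys the same conjugation identity (again because $M$ commutes with multiplication by $\epsp$), so the vertical cancellation above applies verbatim for every $t$ and the count is carried entirely by the horizontal segments. On the top segment $[z_N,z_N+2\pi]$ and its complex conjugate, a direct lower bound on $|\xi+\sigma_l|\,|\xi+\overline{\sigma_l}|$ exactly as in the proof of Theorem~\ref{theo:large_char_val}---using $|\Re\xi|\le\pi$, the separation $|\kappa_{N+1}-\kappa_N|\ge\pi/L$, and the bound \eqref{eq:setS_N} on $\kappa_N$---yields $\|\Delta_{\xi}^{-1}\omega^2\epsp\|_{\calL(L^2(\Omega))}<1$ there. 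Consequently $B_{\xi}^{(t)}=\Delta_{\xi}(I+t\Delta_{\xi}^{-1}\omega^2\epsp)$ is boundedly invertible on both horizontal segments for all $t\in[0,1]$, the integrand $\tr\bigl(\partial_{\xi}B_{\xi}^{(t)}(B_{\xi}^{(t)})^{-1}\bigr)$ depends continuously on $(t,\xi)$ there, and so the top-plus-bottom integral is continuous in $t$. For each fixed $t$ the characteristic values are discrete (analytic Fredholm theory), so a path making the full contour admissible exists and the integral equals $2\pi i\,\mathfrak{N}((B_{\xi}^{(t)});\Gamma_t)\in 2\pi i\,\Z$. A continuous integer-valued function is constant, whence $\mathfrak{N}((B_{\xi});\Gamma)=\mathfrak{N}((\Delta_{\xi});\Gamma)$.

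It then remains to evaluate the count at $t=0$. Since $\Delta_{\xi}$ is diagonal in $\{\varphi_l\}$ with $-\widehat{(\Delta_{\xi}u)}(l)=(\xi+\sigma_l)(\xi+\overline{\sigma_l})\widehat{u}(l)$ by \eqref{eq:Delta_hat}, its characteristic values are precisely the numbers $\sigma_l$ and $\overline{\sigma_l}$. Inside the rectangle $\Re z\in[-\pi,\pi)$, $|\Im z|<\Im z_N=\tfrac12(\kappa_N+\kappa_{N+1})$, only the index $l_1=0$ contributes, leaving the points $\pm i\kappa_{l_2}$ with $\kappa_{l_2}<\Im z_N$, i.e.\ $l_2\in\{1,\dots,N\}$. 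Each such index carries null multiplicity $1$ (the eigenvalue function $\xi^2+\kappa_{l_2}^2$ has a simple root at each of $\pm i\kappa_{l_2}$ when $\kappa_{l_2}>0$, and a double root at $0$ when $\kappa_{l_2}=0$), so the two points $\pm i\kappa_{l_2}$ contribute $2$ per index; coincidences among the $\kappa_{l_2}$ are absorbed automatically into the null multiplicity of the repeated characteristic value. Summing over $l_2=1,\dots,N$ gives $\mathfrak{N}((\Delta_{\xi});\Gamma)=2N$, and therefore $\mathfrak{N}((B_{\xi});\Gamma)=2N$.

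The main obstacle is the treatment of the two vertical portions of $\Gamma$. Near the real axis $\Delta_{\xi}$ has many nearby characteristic values, and for large $\omega$ one cannot guarantee $\|\Delta_{\xi}^{-1}\omega^2\epsp\|<1$ there, so a direct Rouch\'e comparison of $B_{\xi}$ with $\Delta_{\xi}$ along the whole contour is unavailable; moreover, during the homotopy characteristic values may migrate across $P$ and $P+2\pi$. Both difficulties are resolved by the periodicity-induced cancellation of the vertical contributions, which makes the enclosed count depend only on the well-controlled horizontal segments and hence insensitive to whatever happens on the vertical paths.
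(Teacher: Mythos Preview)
Your overall strategy---deform $B_{\xi}$ to $\Delta_{\xi}$ through $B_{\xi}^{(t)}=\Delta_{\xi}+t\omega^{2}\epsp$, use the $2\pi$-periodicity to neutralize the vertical sides, and evaluate the count at $t=0$---is exactly the paper's. The final counting at $t=0$ is also fine.

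There is, however, a genuine gap in the execution. You define the ``scalar integrand'' $g(\xi)=\tr\bigl(\partial_{\xi}B_{\xi}\,B_{\xi}^{-1}\bigr)$ and use its $2\pi$-periodicity to cancel the vertical contributions. But this pointwise trace does not exist: $\partial_{\xi}B_{\xi}=2i\partial_{x_1}-2\xi$ is first order, so $\partial_{\xi}B_{\xi}\,B_{\xi}^{-1}$ is a pseudodifferential operator of order $-1$ on the two-dimensional domain $\Omega$; in the basis $\{\varphi_l\}$ its diagonal behaves like $|l_1|/|l|^2$, whose sum over $l\in\Z\times\N$ diverges, so the operator is not trace class. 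The Gohberg--Sigal formula \eqref{eq:rouche_log_res} is $\tr\bigl(\oint_{\Gamma}\partial_{\xi}A_{\xi}A_{\xi}^{-1}\,d\xi\bigr)$, the trace of a finite-rank operator arising from the closed contour integral, not $\oint_{\Gamma}\tr(\cdots)\,d\xi$. Without a pointwise trace you can no longer split off the vertical pieces, and the commutator-type remainder $X-M^{-1}XM$ with $X=\int_{P}\partial_{\xi}B_{\xi}B_{\xi}^{-1}\,d\xi$ has no reason to be trace class either. For the same reason the ``top-plus-bottom integral depends continuously on $t$'' step is unjustified: over a non-closed arc the integral is not finite rank, and you cannot take its trace.

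The paper avoids this by never touching the trace formula on open arcs. It works directly with the characteristic values: by Remark~\ref{rem:periocity_char_val} they are $2\pi$-periodic, and they depend continuously on the homotopy parameter $\mu$. Since (by the same estimate you quote) no characteristic value of $B(\xi,\mu)$ lies on the horizontal segments for any $\mu\in[0,\omega^{2}]$, the characteristic values viewed on the cylinder $(\R/2\pi\Z)+i\R$ never leave the band $|\Im z|<|\Im z_N|$. Continuity then forces $\mathfrak{N}((B(\xi,\omega^{2}));\Gamma)=\mathfrak{N}((B(\xi,0));\Gamma)=2N$. Your argument can be repaired in exactly this way: replace the trace-integrand cancellation by the observation that a characteristic value exiting through $P$ simultaneously re-enters through $P+2\pi$, so the enclosed count is constant along the homotopy.
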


\begin{proof}
Let us define $B(\xi,\mu):=\Delta_\xi + \mu\epsp$ for $\mu\in [0,\omega^2]$. 
From Proposition \ref{prop:large_char_val} we can deduce that the segments
$[z_N;z_N+2\pi]$ and $[\bar{z}_N+2\pi,\bar{z}_N]$ contain
no characteristic values of $\xi\mapsto B(\xi,\mu)$ for any 
$\mu\in[0,\omega^2]$. For a given $\mu \in [0,\omega^2]$ it follows 
from the discreteness of the set of characteristic values of 
$\xi\mapsto B(\xi,\mu)$ that the segment $[z_N,\overline{z}_N]$ contains
at most a finite number of characteristic values, and there exists $\delta>0$
such that $\delta$-balls around these characteristic values contain no
further characteristic values. We deform the straight line 
$[z_N,\overline{z}_N]$ in $\delta$-neighborhoods of the characteristic
values to left semi-circles as shown in Fig.~\ref{subfig:contour_small_char_val} to obtain a contour $P_{\mu}$. Because of the $2\pi$ periodicity of the characteristic values, 
the imaginary parts of the
characteristic values on $[z_N,\overline{z}_N]$ and on $2\pi+[\overline{z}_N,z_N]$ coincide, and replace the segment $2\pi+[\overline{z}_N,z_N]$ by $-(2\pi+P_{\mu})$. This yields a contour 
$\Gamma_{\mu}:=[z_N+2\pi,z_N]\cup P_{\mu} \cup[\bar{z}_N,\bar{z}_N+2\pi]\cup 
-(2\pi+P_{\mu})$ 
which contains precisely all characteristic values of
$B(\cdot,\mu)$ in the rectangle $\mathcal{R}$. Moreover, 
$\Gamma_{\omega^2}$ coincides with the contour $\Gamma$ in the proposition. With this construction the function 
$\overline{N}:[0,\omega^2]\to \{0,1,2,\dots\}$,
\[
\overline{N}(\mu):=\mathfrak{N}((B_\xi);\Gamma_{\mu})
\]
is well-defined, and in particular it does not depend on the
choice of $\Gamma_{\mu}$. Moreover, it follows from Theorem \ref{theo:rouche} 
with $A_{\xi} = B(\xi,\mu)$ and $S_{\xi} =\tilde{\delta} \epsp$
such that $|\tilde{\delta}|<1/\left(\overline{\varepsilon}\,\max_{\xi\in \Gamma_{\mu}} \|A_{\xi}^{-1}\|_{\mathcal{L}(L^2(\Omega))}\right)$  
that $\overline{N}$ is constant in a neighborhood of each 
$\mu\in [0,\omega^2]$. 
Hence, $\overline{N}$ is constant on whole interval $[0,\omega^2]$ and 
\[
\mathfrak{N}((B_\xi);\Gamma)= \overline{N}(\omega^2) = \overline{N}(0)
= \mathfrak{N}((\Delta_\xi);\Gamma_0) = 2N.
\]
\end{proof}

We mention that Proposition \ref{prop:firstcharvect} yields an independent proof of the following
well-known result (see \cite[Corollary 5.1.5]{B:nazarov_plamenevsky}):
\begin{corollary}\label{coro:propagating_modes}
The dimension of the space $\mathcal{V}_{\rm p}$ spanned by Floquet modes with real 
quasi momentum is finite and even. 
\end{corollary}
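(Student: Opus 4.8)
The plan is to read $\mathcal{V}_{\rm p}$ off the characteristic values of $(B_\xi)$ and then exploit a symmetry coming from the reality of $\epsp$. By Remark \ref{rem:char_val_eig} every Floquet mode with real quasi momentum $\xi_0$ corresponds to a real characteristic value of $(B_\xi)$, and by Remark \ref{rem:periocity_char_val} the space $\mathcal{V}_{\xi_0}$ depends only on $\xi_0$ modulo $2\pi$ (multiplying a periodic factor by $e^{2\pi i x_1}$ leaves it periodic). Fixing the fundamental domain $\Re\xi\in[-\pi,\pi)$, Proposition \ref{prop:char_val_eig} gives $\dim\mathcal{V}_{\xi_0}=\mathfrak{N}((B_\xi);\xi_0)$, and since Floquet modes with distinct quasi momenta are generalized eigenvectors of $\mathcal{T}$ for distinct eigenvalues and hence independent,
\begin{equation*}
\dim\mathcal{V}_{\rm p}=\sum_{\xi_0\in[-\pi,\pi)\cap\R}\mathfrak{N}((B_\xi);\xi_0).
\end{equation*}
Every real $\xi_0$ has $\Im\xi_0=0<|\Im z_N|$, so all these characteristic values lie inside the rectangle of Proposition \ref{prop:firstcharvect}; hence the sum is bounded by $\mathfrak{N}((B_\xi);\Gamma)=2N$, which already proves finiteness.

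For the parity I would use that $\epsp$ is real. Since each transverse boundary operator $\gamma$ yields a self-adjoint realization of $-\Delta$ on $\Omega$, integration by parts gives $B_\xi^*=B_{-\bar\xi}$ in $L^2(\Omega)$, so the adjoint family of $(B_\xi)$ is $(B_{-\xi})$. By the Gohberg--Sigal theory the null multiplicity of a holomorphic family at a point equals that of its adjoint family at the conjugate point, which here yields $\mathfrak{N}((B_\xi);\xi_0)=\mathfrak{N}((B_\xi);-\bar\xi_0)$ for every characteristic value. Restricted to the real axis this pairs $\xi_0$ with $-\xi_0$ with equal multiplicities, so all contributions cancel in pairs except those at the two self-conjugate points, and
\begin{equation*}
\dim\mathcal{V}_{\rm p}\equiv \mathfrak{N}((B_\xi);0)+\mathfrak{N}((B_\xi);\pi)\pmod 2.
\end{equation*}

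It remains to show that this sum is even, and here I would use the Wronskian forms built from Green's identity. The sesquilinear form $q$ from \eqref{eq:defi_q} is independent of $x_1$, skew-Hermitian, vanishes between modes of different quasi momenta (since $q_{x_1}(v,w)$ is a fixed Fourier coefficient of an $x_1$-periodic function), and is non-degenerate on $\mathcal{V}_{\rm p}$ by the uniqueness results of Appendix \ref{appendix:uniqueness}. For the Dirichlet, Neumann, mixed and $\beta\in\{0,\pi\}$ boundary conditions the analogous bilinear Wronskian
\begin{equation*}
b(v,w):=\int_0^L\paren{\diffq{v}{x_1}(x)\,w(x)-v(x)\,\diffq{w}{x_1}(x)}\,dx_2
\end{equation*}
is likewise $x_1$-independent (the lateral boundary terms cancel because $e^{2i\beta}=1$) and non-degenerate; being alternating it can only be non-degenerate on an even-dimensional space, which settles evenness at once for these cases. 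For general $\beta$ the bilinear form is no longer $x_1$-independent, and one must instead show that the Hermitian form $iq$ has vanishing signature on $\mathcal{V}_0$ and on $\mathcal{V}_\pi$, i.e.\ that the modes at these momenta split evenly into right- and left-going ones.

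This signature statement is the main obstacle. I would establish it through the homotopy $B(\xi,\mu)=\Delta_\xi+\mu\epsp$, $\mu\in[0,\omega^2]$, already used in Proposition \ref{prop:firstcharvect}. At $\mu=0$ the only possible real characteristic value is the at most double point $\xi_0=0$, where a direct computation of $q$ on the length-two Jordan chain gives signature $(1,1)$, so $iq$ has signature zero on $\mathcal{V}_{\rm p}$ at $\mu=0$. As $\mu$ increases the characteristic values move continuously, and by the symmetry $\xi\mapsto-\bar\xi$ any crossing of the real axis away from $\{0,\pi\}$ occurs in mirror pairs contributing a neutral $(p,m)\mapsto(m,p)$ block; the delicate part is a local band-edge analysis at $0$ and $\pi$ showing that modes reaching the real axis there are created in right/left-going pairs. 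This local analysis is where I expect most of the work to lie, because $B(0,\mu)$ and $B(\pi,\mu)$ are self-adjoint yet the $\xi$-family can still produce Jordan blocks, so one cannot simply read the flux signs off the geometric eigenspaces.
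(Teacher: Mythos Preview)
Your finiteness argument is fine and matches the paper. The evenness argument, however, rests on a miscomputation of the adjoint and ends up far more complicated than necessary---and you do not actually complete it.

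The formula $B_\xi^* = B_{-\bar\xi}$ is wrong. On $L^2(\Omega)$ with any of the listed lateral boundary conditions, $\Delta$ is self-adjoint and $\partial_{x_1}$ is anti-self-adjoint (periodicity in $x_1$), so
\[
(\Delta_\xi)^* = \Delta + \overline{2i\xi}\,(-\partial_{x_1}) - \overline{\xi^2}
= \Delta + 2i\bar\xi\,\partial_{x_1} - \bar\xi^2 = \Delta_{\bar\xi},
\]
and hence $B_\xi^* = B_{\bar\xi}$. (For real $\xi$ this says $B_\xi$ is self-adjoint, which it visibly is since $i\partial_{x_1}$ is self-adjoint; it is certainly not equal to $B_{-\xi}$.) What you may have had in mind is the \emph{conjugation} symmetry $\overline{B_\xi u} = B_{-\bar\xi}\bar u$, but that is not the adjoint, and it only maps the problem to itself when the boundary condition $\gamma$ is closed under complex conjugation---which fails for $\gamma_\beta$ with $\beta\notin\{0,\pi\}$.

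With the correct symmetry the paper's proof is a two-line counting argument and avoids all of your signature and homotopy analysis. The Gohberg--Sigal adjoint result gives $\mathfrak{N}((B_\xi);\xi_0)=\mathfrak{N}((B_\xi);\bar\xi_0)$ for every $\xi_0\in\C$, so inside the contour $\Gamma$ of Proposition~\ref{prop:firstcharvect} the characteristic values with positive and negative imaginary part have the same total multiplicity $\overline m$. Since $\mathfrak{N}((B_\xi);\Gamma)=2N$, the real characteristic values contribute $2N-2\overline m$, and by Proposition~\ref{prop:char_val_eig} this equals $\dim\mathcal{V}_{\rm p}$. No separate treatment of $\xi_0\in\{0,\pi\}$, no Wronskian signature, and no band-edge analysis is needed.
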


\begin{proof}
Due to Proposition \ref{prop:symm} 
the contour $\Gamma$ in Proposition \ref{prop:firstcharvect} encloses 
the same number $\overline{m}$ of characteristic values with positive and with
negative imaginary parts. Together with Proposition \ref{prop:char_val_eig} we find that 
$\dim \mathcal{V}_{\rm p}= 2N-2\overline{m}$ is finite and even.
\end{proof}

\subsection{Summary}\label{sec:summary}
Let us summarize our results on the characteristic values of 
$(B_{\xi})$ in the strip $\{z\in\C:-\pi \leq \Re z<\pi\}$ and 
introduce some notation for the following sections:
\begin{itemize}
	\item The set of characteristic values of $(B_{\xi})$ in the strip 
	$\{z\in\C:-\pi \leq \Re z<\pi\}$ is countable and symmetric with respect
	to the real axis.
	\item An even number $2\overline{n}$ of these characteristic values
	(counted with their total null multiplicity) lies on the real axis. 
	The corresponding Floquet modes are propagating. $\overline{n}$ linearly 
	independent ``outgoing'' Floquet modes $\vp_1,\dots \vp_{\overline{n}}$
	are selected as described in section \ref{sec:the_problem} or 
	\ref{appendix:radiation}, and the corresponding
	characteristic values (or quasi-momenta) are denoted by
	$\xi_1^+,\dots,\xi_{\overline{n}}^+$. 
	\item We arrange the characteristic values with positive imaginary parts 
	(counted with their total null multiplicities) in non-decreasing order of their 
	imaginary parts and labeled them 
	$\xi_{\overline{n}+1}^+,\xi_{\overline{n}+2}^+,\dots$.
	Corresponding exponentially decaying Floquet modes are chosen as in Proposition 
	\ref{prop:char_val_eig}) and denoted by 
	$\vp_{\overline{n}+1}, \vp_{\overline{n}+2},\dots$. 
	For $n>N$ the characteristic values $\xi_n^+$ are close to the reference  
	values $\xiup_n$ and belong to the set $\mathcal{S}$ sketched in 
	Fig.~\ref{subfig:set_S}. 
	\item 
	For Dirichlet and Neumann lateral boundary conditions the characteristic
	values $\xi_n^+$ for $n>N$ are simple. For other boundary conditions the
	total null multiplicity of $\xi_n^+$ with $n>N$ is always $\leq 2$, and except
	for quasi-periodic boundary conditions with $\beta\in\{0,\pi\}$ at most
	a finite number of them is not simple.  
\end{itemize}

\section{Properties of the operator $T$}\label{sec:synthesis}
\subsection{Reference operator $\Tref$}
To study the operator $T:(a_n)\mapsto \sum_{n=1}^{\infty}a_n \vp_n$ 
in \eqref{eq:defi_T}, we introduce a reference operator $\Tref$
using the Floquet modes
\begin{equation}\label{eq:defi_vr}
\vr_n(x):= \paren{\kappa_n+\frac{1}{2\kappa_n}}^{-1/2} e^{-\kappa_nx_1}\psi_n(x_2),
\qquad n\in \N\quad \mbox{if } \kappa_n>0
\end{equation}
corresponding to $\omega =0$. The normalization constant has been chosen such
that $\|\vr_n\|_{H^1_{\gamma}(S^+)} = 1$. 
In the special case $\kappa_n=0$, which can 
only occur for $n=1$, we set $\vr_1(x):=e^{-x_1}\psi_1(x_2)$. In this case 
$\vr_1$ is not a solution to \eqref{eq:pde} with $\omega=0$,
but we will not need this property. 
$\Tref$ is defined by 
\begin{align*}
\Tref:&\, l^2(\N) \to H^{1,+}_{\gamma}(S^+),& (a_n)\mapsto \sum_{n=1}^{\infty}a_n \vr_n,
\end{align*}
and has the following properties:

\begin{lemma}
\label{lemm:T0}
\begin{enumerate}
\item 
$\Tref$ is well defined and
isometric with $\Tref(l^2(\N))\subset H^{1}_{\gamma}(S^+)$, i.e.\  $\|\Tref(a_n)\|_{H^{1}_{\gamma}} = \|(a_n)\|_{l^2}$
for all $(a_n)\in l^2(\N)$.
\item
$\ltrace \Tref:l^2(\N)\to \lbdspace$ is a Fredholm operator with index $0$. 
\end{enumerate}
\end{lemma}

\begin{proof}
\emph{Part 1:}  
The assertion follows from the fact that $\{\vr_n:n\in\N\}$ is an orthonormal
system in $H^{1,+}_{\gamma}(S^+)$.

\emph{Part 2:} Define $\tilde{\psi}_n:= \paren{\kappa_n+\frac{1}{2\kappa_n}}^{-1/2}\psi_n$ 
if $\kappa_n>0$ and $\tilde{\psi}_n:=\psi_n$ else. 
Then 
$(\ltrace \Tref)(a_n) = \sum_{n=1}^{\infty}(\theta_{\rm D}-\kappa_n\theta_{\rm N}) a_n\tilde{\psi}_n$. If $\theta_{\rm N}=0$, the assertion
follows from the fact that $\{\tilde{\psi}_n:n\in\N\}$ is a 
complete orthogonal system in $\lbdspace = H^{1/2}_{\gamma}((0,L))$ and 
$\sup_{n\in\N}\|\tilde{\psi}_n\|_{H^{1/2}}/
\inf_{n\in\N}\|\tilde{\psi}_n\|_{H^{1/2}}<\infty$. 
To treat the case $\theta_{\rm N}\neq 0$ note that $\{\tilde{\psi}_n:n\in\N\}$ is 
also a complete orthogonal system in $\lbdspace = H^{-1/2}_{\gamma}((0,L))$
and that 
$\sup_{n\in\N}((1+\kappa_n^2)^{1/2}\|\tilde{\psi}_n\|_{H^{-1/2}})/
\inf_{n\in\N}((1+\kappa_n^2)^{1/2}\|\tilde{\psi}_n\|_{H^{-1/2}})<\infty$.
If none of the coefficients $\theta_{\rm D}-\kappa_n\theta_{\rm N}$,
$n\in\N$ vanishes, this immediately implies that 
$\ltrace \Tref:l^2(\N)\to  H^{-1/2}_{\gamma}((0,L))$ is bounded and
boundedly invertible. If $\theta_{\rm D}-\kappa_n\theta_{\rm N}=0$
precisely for $n=n_0$, then the operator 
$(a_n)\mapsto (\ltrace \Tref)(a_n) +a_{n_0}\psi_{n_0}$, which
is a rank-1 perturbation of $\ltrace \Tref$, is boundedly invertible
and hence $\ltrace \Tref$ is Fredholm with index $0$. The case that
two of the coefficients $\theta_{\rm D}-\kappa_n\theta_{\rm N}$ vanish
can be treated analogously, and because of the form \eqref{eq:sigma_ls}
of the $\kappa_n$ no more than two coefficients can vanish.
\end{proof}

\subsection{Estimates on perturbation of eigenvectors}
In the following estimates on the perturbation of eigenvectors we will
make all constants explicit. To do so, we first have to introduce
for each connected component $\Scomp$ of $\calS$ in \eqref{eq:setS} 
an $L^2$-orthogonal projection operator
\begin{equation}\label{eq:defiPScomp}
P_{\Scomp} u:= 
\sum_{n\in I_{\Scomp}} \lsp u,\varphi_{0,n}\rsp \varphi_{0,n}
\qquad \mbox{with}\qquad
I_{\Scomp}:=\{n\in \N:\xiup_n\in\Scomp\}\,
\end{equation}
and define the quantity 
\[
\delta_{\gamma}:=\frac{1}{2}\inf\{|\kappa_n-\kappa_{n'}|:n,n'\in \N, \kappa_n\neq
\kappa_{n'}\}\,.
\]
It is easy to see from the explicit values of the $\kappa_n$ given in 
\S~\ref{sec:spectra} that $\delta_{\gamma}=\pi/(2L)$ for 
$\gamma\in\{\gamma_{\rm D}, \gamma_{\rm N},\gamma_{\rm DN}\}$, 
$\delta_{\gamma_\beta}=\min_{l\in\N}|l\pi-\beta|/L$ if 
$\beta\notin\{0,\pi\}$, and $\delta_{\gamma_\beta}=\pi/L$ for
$\beta\in\{0,\pi\}$. Note that for a connected component $\Scomp$ of 
the set $\mathcal{S}$ defined in Proposition \ref{prop:char_val_eig} we have
\begin{equation}\label{eq:use_of_delta_gamma}
|\Im\xi- \Im \xi_{0,n}| > \delta_{\gamma}\qquad \mbox{for all }
\xi\in \Scomp, \xi_{0,n}\notin \Scomp 
\end{equation}
(see Fig.~\ref{subfig:FloquetModes}).

\begin{lemma}[estimates of eigenvectors]\label{lemm:FloquetModes}
For a connected component $\Scomp$ of $\calS$ in \eqref{eq:setS} 
define $\kappa_{\Scomp}:=\inf_{\xi\in\Scomp}\Im \xi$ and 
recall the notation \eqref{eq:defiPScomp}. 
Then all eigenvectors $u^*$ corresponding
to characteristic values in $\Scomp$ satisfy the estimates
\begin{subequations}
\begin{align}
\label{eq:as_estim_u}
\|u^*-P_{\Scomp}u^*\|_{L^2(\Omega)} 
&\leq  \frac{\omega^2 \bar\eps}
{\min(\delta_{\gamma},\pi) \kappa_{\Scomp}}\|u^*\|_{L^2(\Omega)},\\
\label{eq:as_estim_u1}
\|\partial_{x_1}u^*\|_{L^2(\Omega)}
&\leq \frac{2\omega^2 \bar\eps}{\kappa_{\Scomp}} \|u^*\|_{L^2(\Omega)},\\
\label{eq:diri_trace}
\|(u^*-P_{\Scomp}u^*)(0,\cdot)\|_{L^2((0,L))}
&\leq C \frac{\omega^2\bar\eps}{\kappa_{\Scomp}}\|u^*\|_{L^2(\Omega)},\\
\label{eq:neum_trace}
\Big\|\diffq{u^*}{x_1}(0,\cdot)\Big\|_{L^2((0,L))}
&\leq \frac{2\omega^2\bar\eps}{\sqrt{3}}\|u^*\|_{L^2(\Omega)}.
\end{align}
\end{subequations}
with $C:=(\sum_{\mind \in\Z} \frac{1}{\pi^2\mind ^2 + \delta_{\gamma}^2})^{1/2}$.
\end{lemma}

\begin{proof} Let $\xi\in\Scomp$ be a characteristic value of $(B_{\xi})$
with eigenvector $u^*$. It follows from \eqref{eq:Delta_hat} that
$(\xi-\xi_l)(\xi-\overline{\xi_l}) \widehat{u^*}(l) = \omega^2\widehat{(\epsp u^*)}(l)$, or
\begin{equation}\label{eq:Bxi_Fourier}
 \widehat{u^*}(l) = 
 \frac{\omega^2\widehat{(\epsp u^*)}(l)}{(\xi-\xi_l)(\xi-\overline{\xi_l})},
\qquad l=(m,n)\in\Z\times\N.
\end{equation}
This identity will be used extensively. 
Moreover, we need the lower bounds 
\begin{subequations}\label{eqs:aux_as_estim}
\begin{align}\label{eq:aux_as_estim1}
|\xi-\xi_l|^2&= (\Re \xi+2\pi \mind )^2+ (\Im \xi-\kappa_{\nind})^2
> \begin{cases}
(\pi \mind )^2, & \nind\in I_{\Scomp}\\
(\pi \mind )^2 + \delta_{\gamma}^2,&\nind\notin I_{\Scomp}
\end{cases}\\
\label{eq:aux_as_estim2}
|\xi-\overline{\xi_l}|^2&= (\Re \xi+2\pi \mind )^2 + (\Im \xi + \kappa_{\nind})^2
> (\pi \mind )^2+\paren{\kappa_{\Scomp} +\kappa_{\nind}}^2
\end{align}
\end{subequations}
which hold for all $\xi\in \Scomp$ and $l\in \Z\times\N\setminus 
\{0\}\times I_{\Scomp}$ and follow by separate estimation of real and imaginary
parts using \eqref{eq:use_of_delta_gamma}, $|\Re\xi|<\pi$, $\kappa_{\nind}\geq 0$
(see Fig.~\ref{subfig:FloquetModes}). 

Estimating $|(\xi-\overline{\xi_l})(\xi-\xi_l)|
\geq \min(\delta_{\gamma},\pi) \kappa_{\Scomp}$ for 
$l\in \Z\times\N\setminus \{0\}\times I_{\Scomp}$ we obtain
\begin{align*}
\lefteqn{\|u^*-P_{\Scomp}u^*\|^2_{L^2}=
\Big\|\sum_{l\in\Z\times\N\setminus \{0\}\times I_{\Scomp}} \widehat{u^*}(l) \varphi_l\Big\|_{L^2}^2
= \sum_{l\in\Z\times\N\setminus \{0\}\times I_{\Scomp}} 
\Big|\frac{\omega^2\widehat{(\epsp u^*)}(l)}
{(\xi-\overline{\xi_l})(\xi-\xi_l)}\Big|^2}\\
& \leq 
\paren{\frac{\omega^2}{\min(\delta_{\gamma},\pi)\kappa_{\Scomp}}}^2
\sum_{l\in\Z\times\N} \!\! |\widehat{(\epsp u^*)}(l)|^2
\leq \paren{\frac{\omega^2\bar\varepsilon}
{\min(\delta_{\gamma},\pi)\kappa_{\Scomp}}}^2 
\|u^*\|_{L^2}^2.
\end{align*}
To prove \eqref{eq:as_estim_u1} we use that 
$|\xi-\xi_l|\,|\xi-\overline{\xi_l}|\geq 
\kappa_{\Scomp}\pi |\mind |$ and 
$\partial_{x_1}\varphi_{(0,\nind)} =0$ to obtain
\begin{align*}
\|\partial_{x_1}u^*\|^2_{L^2} 
&= \Big\| \!\!\!\sum_{l\in (\Z\setminus\{0\})\times \N} \!\!\!\!\!\!\!\!\!
\widehat{u^*}(l)\partial_{x_1}\varphi_l\Big\|^2 
=\!\!\!\!\!\!\! \sum_{l\in (\Z\setminus\{0\})\times \N} \!\!\!
\Big|\frac{(2\pi \mind ) \omega^2 \widehat{(\epsp u^*)}(l)}
{(\xi-\overline{\xi_l})(\xi-\xi_l)}\Big|^2\\
&\leq \paren{\frac{2\omega^2\bar\varepsilon}
{\kappa_{\Scomp}}}^2\!\! \|u^*\|_{L^2}^2.
\end{align*}
Since 
\[
u^*(0,\cdot) = \sum_{l\in\Z\times\N} \widehat{u^*}(l)\varphi_l(0,\cdot)
= \sum_{\nind\in\N}\paren{\sum_{\mind \in\Z}\widehat{u^*}(\mind ,\nind)} \psi_{\nind},
\]
we have
\begin{equation}\label{eq:aux_trace}
\begin{split}
\|(u^*-P_{\Scomp}u^*)(0,\cdot)\|_{L^2((0,L))}^2
=& \sum_{\nind\in \N \setminus I_{\Scomp}} 
\Big|\sum_{\mind \in\Z}\widehat{u^*}(\mind ,\nind)\Big|^2\\
&+  \sum_{\nind\in  I_{\Scomp}} \Big|\sum_{\mind \in\Z\setminus\{0\}}\widehat{u^*}(\mind ,\nind)\Big|^2.
\end{split}
\end{equation}
Using the Cauchy-Schwarz inequality, \eqref{eq:Bxi_Fourier}, and
the lower bounds $|\xi-\xi_l|^2 \geq \pi^2\mind ^2+\delta_{\gamma}^2$ and
$|\xi-\overline{\xi_l}|^2\geq \kappa_{\Scomp}^2$ (see \eqref{eqs:aux_as_estim}), 
the first term in \eqref{eq:aux_trace} can be bounded by
\begin{align*}
\sum_{\nind\in \N \setminus I_{\Scomp}} 
\abs{\sum_{\mind \in\Z}\widehat{u^*}(\mind ,\nind)}^2
&\leq \paren{\sum_{\mind \in\Z} \frac{1}{\pi^2\mind ^2+\delta_{\gamma}^2}}
\sum_{\mind \in\Z,\nind\in\N\setminus I_{\Scomp}}
(\pi^2\mind ^2 + \delta_{\gamma}^2) |\widehat{u^*}(\mind ,\nind)|^2 \\
&\leq  C^2\sum_{\mind \in\Z,\nind\in\N\setminus I_{\Scomp}} 
\frac{\omega^4 |\widehat{(\epsp u^*)}(\mind ,\nind)|^2}{|\xi-\xi_l|^2} 
\leq C^2\frac{(\omega^2\bar\varepsilon)^2}{\kappa_{\Scomp}^{2}} \|u^*\|_{L^2}^2\,.
\end{align*}
Using \eqref{eq:as_estim_u1} the second term in \eqref{eq:aux_trace} can be estimated
by
\begin{align*}
\sum_{\nind\in  I_{\Scomp}} \Big|\sum_{\mind \in\Z\setminus\{0\}}\widehat{u^*}(\mind ,\nind)\Big|^2
&\leq  \paren{\sum_{\mind \in\Z\setminus\{0\}} \frac{1}{(2\pi \mind )^2}}
\paren{\sum_{\mind \in\Z\setminus\{0\},\nind\in I_{\Scomp}} 
\!\!\!\!\!\!\!\!\!
(2\pi \mind )^2 |\widehat{u^*}(\mind ,\nind)|^2}\\
&\leq \frac{C^2}{4} \|\partial_{x_1}u^*\|_{L^2}^2
\leq C^2 \frac{(\omega^2\bar\varepsilon)^2}{\kappa_{\Scomp}^2} \|u^*\|_{L^2}^2
\end{align*}
completing the proof of \eqref{eq:diri_trace}.

To prove \eqref{eq:neum_trace}, first note that
\[
\partial_{x_1}u^*(0,\cdot) = \sum_{l\in\Z\times\N} \widehat{u^*}(l)\partial_{x_1}\varphi_l(0,\cdot)
= \sum_{\nind\in\N}\paren{\sum_{\mind \in\Z\setminus\{0\}}(2\pi i \mind )\widehat{u^*}(\mind ,\nind)} \psi_{\nind}.
\]
It follows from \eqref{eq:Bxi_Fourier}, the Cauchy-Schwarz inequality, 
the estimate $|\xi-\xi_l|\,|\xi-\overline{\xi_l}|\geq (\pi \mind )^2$ 
(see \eqref{eqs:aux_as_estim}),
and the identity $\sum_{\mind =1}^{\infty} \mind ^{-2} = \frac{\pi^2}{6}$ that
\begin{align*}
\|\partial_{x_1}u^*(0,\cdot)\|^2_{L^2} =&
\sum_{\nind\in\N}
\abs{\sum_{\mind \in\Z\setminus\{0\}} \frac{\omega^2(2\pi i\mind )\widehat{(\varepsilon_p u^*)}(l)}
{(\xi-\xi_l)(\xi-\overline{\xi_l})}}^2\\
\leq&
\paren{2 \sum_{\mind =1}^{\infty}\frac{1}{\mind ^2}}
\sum_{l\in\Z\setminus\{0\}\times\N}\frac{(2\pi\omega^2 \mind ^2)^2 |\widehat{(\varepsilon_p u^*)}|^2}
{(\pi \mind )^4}
\leq \frac{4}{3} (\omega^2 \bar\varepsilon)^2  \|u^*\|_{L^2}^2,
\end{align*}
which proves \eqref{eq:neum_trace}.
\end{proof}

We can still fix a complex scaling constant in the Floquet modes
$\vp_n$ defined in \S \ref{sec:summary}. For our purposes it will be
sufficient to fix this constant for all but a finite number of these
$\vp_n$. We will assume in the following that 
\begin{equation}\label{eq:general_case}
\gamma\notin \{\gamma_0,\gamma_\pi\}\,.
\end{equation}
The special case $\gamma\in \{\gamma_0,\gamma_\pi\}$ will
be discussed in Appendix \ref{sec:appendix_quasiperiodic}. 
If \eqref{eq:general_case} holds true, it follows 
from the explicit form of $\kappa_n$ given in 
{\S} \ref{sec:preliminaries} that there exists $\tilde{N}\in\N$
such that for all $n>\tilde{N}$ the connected component 
$\Scomp_n$ of $\mathcal{S}$ containing
$\xiup_n$ is a disk containing precisely one characteristic value with
multiplicity. Therefore, the function 
\begin{equation}\label{eq:defi_un}
u_n(x):= e^{-i\xi_n x_1} \vp_n(x),\qquad n>\tilde{N}
\end{equation}
is periodic in $x_1$,  $P_{\Scomp_n}u_n$ is constant 
in the first variable, and $(P_{\Scomp_n}u_n)(0,\cdot)$ is a multiple
of $\psi_n$. Therefore, we can choose the free complex scaling constants of
$\vp$ and $u_n$ such that
\begin{equation}\label{eq:scaling_un}
(P_{\Scomp_n}u_n)(0,x_2) 
= \paren{\kappa_n+\frac{1}{2\kappa_n}}^{-1/2}\psi_n(x_2)\,.
\end{equation}
If $\omega=0$ this scaling yields $\vp_n=\vr_n$. 

\begin{corollary}\label{cor:estim_un}
There exist a constant $C>0$ depending only on  $\omega^2\bar\varepsilon$ 
and $\gamma$ such that for all $n>\tilde{N}$
\begin{subequations}\label{eqs:asym_estim_with_n}
\begin{align}
\label{eq:norm_un}
\kappa_n^{3/2} \Norm{u_n-P_{\Scomp_n}u_n}_{L^2(\Omega)} \leq &\, C,\\
\label{eq:estim_fn}
\kappa_n^{3/2} \Norm{(u_n-P_{\Scomp_n}u_n)(0,\cdot)}_{L^2((0,L))}\leq &\, C,\\
\label{eq:estim_uprime_n}
\kappa_n^{1/2} \Norm{\partial_{x_1}u_n|_{x_1=0}}_{L^2((0,L))} \leq &\, C.
\end{align}
\end{subequations}
\end{corollary}

\begin{proof}
It follows from  Lemma \ref{lemm:FloquetModes} that 
$\|u_n\|_{L^2(\Omega)}\leq 2\|P_{\Scomp_n}u_n\|_{L^2(\Omega)}$
for sufficiently large $n$. Moreover, due to \eqref{eq:scaling_un}
we have $\|P_{\Scomp_n}u_n\|_{L^2(\Omega)}= 
\paren{\kappa_n+\frac{1}{2\kappa_n}}^{-1/2}$. Now the assertions
follow from Lemma \ref{lemm:FloquetModes}. 
%
%
\end{proof}

\subsection{Properties of the operator $T$}

\begin{proposition}[properties of $T$]\label{prop:T_well_defined}
\begin{enumerate}
\item
The operator $T$
is well-defined and bounded, and $T-\Tref$ is compact. 
\item
$T(a_n)$ satisfies \eqref{eq:pde} and \eqref{eq:bc_top_bottom} for all 
$(a_n)\in l^2(\N)$.
\end{enumerate}
\end{proposition}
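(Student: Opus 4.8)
The plan is to prove the second assertion as an immediate consequence of the first, and to reduce the first assertion entirely to the compactness of $T-T_0$. Once $T$ is known to be well defined and bounded (part~1), part~2 follows because each $v_n$ is a Floquet mode and hence, by Definition~\ref{def:genFloquet_mode}, a weak solution of \eqref{eq:pde}, \eqref{eq:bc_top_bottom}; the set $V$ of such solutions is a closed subspace of $H^{1,+}_{\gamma}(S^+)$, and $T(a_n)$ is the $H^{1,+}_{\gamma}$-limit of its partial sums $\sum_{n\le M}a_nv_n\in V$, so $T(a_n)\in V$. For part~1 I would write $T=T_0+(T-T_0)$; since $T_0$ is bounded by Lemma~\ref{lemm:T0}, it suffices to show that $T-T_0$ is compact, which simultaneously yields that $T$ is well defined and bounded.

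To analyse $T-T_0$ I first discard the finitely many indices $n\le\tilde N$, which contribute a finite-rank (hence compact) operator. For $n>\tilde N$ we have $\xi_n\in\Scomp_n$ by Theorem~\ref{theo:large_char_val}, Part~1 together with Proposition~\ref{prop:firstcharvect}, and (as shown in the proof of Lemma~\ref{lemm:T0}) the characteristic value $\xi_n$ has partial null multiplicity one, so that $v_n=e^{i\xi_nx_1}u_n$ while $w_n=e^{-\kappa_nx_1}P_{\Scomp_n}u_n$. I would then split
\[
v_n-w_n=\bigl(e^{i\xi_nx_1}-e^{-\kappa_nx_1}\bigr)P_{\Scomp_n}u_n+e^{i\xi_nx_1}\bigl(u_n-P_{\Scomp_n}u_n\bigr).
\]
The first summand is the easy one. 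Because $\xi_n\in\Scomp_n$ we have $|i\xi_n+\kappa_n|=|\xi_n-i\kappa_n|<\omega^2\bar\varepsilon/\kappa_n$, and because $P_{\Scomp_n}u_n$ is independent of $x_1$ (with $\|P_{\Scomp_n}u_n\|_{L^2}\sim\|u_n\|_{L^2}\sim\kappa_n^{-1/2}$ by \eqref{eq:norm_un} and the normalization of $u_n$) the integration in $x_1$ of this $x_1$-constant factor produces an extra power $\kappa_n^{-1}$. A direct computation of the $H^1(S^+)$-norm then gives $\|(e^{i\xi_nx_1}-e^{-\kappa_nx_1})P_{\Scomp_n}u_n\|_{H^1}^2=O(\kappa_n^{-2})$, which is summable in $n$; hence this part of $T-T_0$ is Hilbert--Schmidt and therefore compact.

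The main obstacle is the remainder operator $G\colon e_n\mapsto e^{i\xi_nx_1}(u_n-P_{\Scomp_n}u_n)$. The bounds \eqref{eq:norm_un} and \eqref{eq:as_estim_u1} control $\|u_n-P_{\Scomp_n}u_n\|_{L^2(\Omega)}$ and $\|\partial_{x_1}u_n\|_{L^2(\Omega)}$ by $O(\kappa_n^{-3/2})$, whose squares are summable; however the tangential derivative $\partial_{x_2}(u_n-P_{\Scomp_n}u_n)$ is only of order $\kappa_n^{-1/2}$, since the remainder carries $x_2$-frequencies of order $\kappa_n$. Consequently $\sum_n\|v_n-w_n\|_{H^1}^2\sim\sum_n\kappa_n^{-1}$ diverges and $G$ is \emph{not} Hilbert--Schmidt, so a termwise estimate cannot suffice.

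Instead I would establish compactness of $G$ by an almost-orthogonality argument. Using \eqref{eq:Bxi_Fourier} and the lower bounds \eqref{eqs:aux_as_estim} one shows that $u_n-P_{\Scomp_n}u_n$ is concentrated at $x_2$-frequencies of order $\kappa_n$, while $e^{i\xi_nx_1}$ decays in $x_1$ at rate $\approx\kappa_n$; the resulting pairwise $H^1(S^+)$-inner products $\langle Ge_n,Ge_m\rangle$ should decay as $|n-m|\to\infty$, and a Schur- (or Cotlar--Stein-) type estimate then shows that the tails $G(I-P_M)$, with $P_M$ the coordinate projection onto the first $M$ indices, tend to $0$ in operator norm as $M\to\infty$. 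Thus $G$ would be a norm-limit of finite-rank operators and hence compact, completing the proof that $T-T_0$ is compact. The delicate point, which I expect to require the most care, is precisely the quantitative decay of $\langle Ge_n,Ge_m\rangle$, since multiplication by the merely bounded coefficient $\epsp$ in \eqref{eq:Bxi_Fourier} only weakly preserves the frequency localization of the remainders.
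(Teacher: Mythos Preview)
Your overall strategy---reduce to showing that $T-T_0$ is compact, split off finitely many indices, and then decompose $v_n-w_n$ as
\[
v_n-w_n=\bigl(e^{i\xi_nx_1}-e^{-\kappa_nx_1}\bigr)P_{\Scomp_n}u_n
+e^{i\xi_nx_1}\bigl(u_n-P_{\Scomp_n}u_n\bigr)
\]
---matches the paper. Part~2 is handled correctly. The gap is in your treatment of the second summand.

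You claim that $\|\partial_{x_2}(v_n-w_n)\|_{L^2(S^+)}$ is only $O(\kappa_n^{-1/2})$, so that $\sum_n\|v_n-w_n\|_{H^1}^2$ diverges and a termwise argument cannot work. This is the conclusion of a \emph{naive} Fourier estimate, and it is too pessimistic: in fact $\|\nabla(v_n-w_n)\|_{L^2(S^+)}^2=O(\kappa_n^{-2})$, which \emph{is} summable, so $T-T_0$ is Hilbert--Schmidt after all. The trick you are missing is that one should not differentiate the remainder directly but instead use Green's first identity. Since $\Delta w_n=0$ (because $\tilde\psi_n''=-\kappa_n^2\tilde\psi_n$) and $\Delta v_n=-\omega^2\epsp v_n$, integration by parts on $(0,l)\times(0,L)$ and letting $l\to\infty$ gives
\[
\|\nabla(v_n-w_n)\|_{L^2(S^+)}^2
=\omega^2\!\int_{S^+}\epsp v_n\,\overline{(v_n-w_n)}\,dx
-\int_0^L\partial_{x_1}(v_n-w_n)(0,x_2)\,\overline{(v_n-w_n)(0,x_2)}\,dx_2.
\]
The volume term is controlled by $\|v_n\|_{L^2}\,\|v_n-w_n\|_{L^2}=O(\kappa_n^{-1/2})\cdot O(\kappa_n^{-3/2})=O(\kappa_n^{-2})$, using the $L^2$ estimate you already have. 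For the boundary term one uses the trace bounds \eqref{eq:estim_fn}, \eqref{eq:estim_uprime_n} together with $|\xi_n-i\kappa_n|<\omega^2\bar\varepsilon/\kappa_n$ to get $\|\partial_{x_1}(v_n-w_n)(0,\cdot)\|_{L^2}=O(\kappa_n^{-1/2})$ and $\|(v_n-w_n)(0,\cdot)\|_{L^2}=O(\kappa_n^{-3/2})$, again giving $O(\kappa_n^{-2})$.

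So the almost-orthogonality / Cotlar--Stein machinery you propose is unnecessary, and since you yourself flag the decay of $\langle Ge_n,Ge_m\rangle_{H^1}$ as ``the delicate point'' and do not actually establish it, your argument as written is incomplete. The integration-by-parts identity replaces the problematic $x_2$-derivative by the equation itself, and this is the idea that closes the proof.
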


\begin{proof}
\emph{Part 1:} In the following $C$ denotes a generic constant 
depending only on $\omega^2\bar\varepsilon$ and $\gamma$.
Note that $\vr_n(x)= \exp(-\kappa_nx_1)\tilde{\psi}_n(x_2)$ with
$\tilde{\psi}_n(x_2):= (P_{\Scomp_n}u_n)(0,x_2)$ for $n>\tilde{N}$.
Inserting the term $\pm e^{i\xi_nx_1}\tilde{\psi}_n(x_2)$ 
in $\|\vp_n-\vr_m\|_{L^2}$ 
and applying the triangle inequality yields the estimate
\begin{align} \label{WL2}
\|\vp_n-\vr_n\|_{L^2(S^+)} 
&\leq
\Norm{e^{i\xi_nx_1}\paren{u_n-P_{\Scomp_n}u_n}}_{L^2(S^+)}
+ \Norm{e^{i\xi_nx_1}-e^{-\kappa_nx_1}}_{L^2}\|\tilde{\psi}_n\|_{L^2} \nonumber \\
&\leq
\frac{\Norm{u_n-P_{\Scomp_n}u_n}_{L^2(\Omega)}}{1-e^{-\Im \xi_{\tilde{N}}}}
+ \paren{\int_0^{+\infty}\abs{e^{i\xi_n x_1}-e^{-\kappa_nx_1}}^2}^{\frac{1}{2}}
\|\tilde{\psi}_n\|_{L^2} \nonumber \\
&\leq \frac{C}{\kappa_n^{3/2}},\qquad n>\tilde{N}.
\end{align}
In the second line we have used that $0<\Im\xi_{\tilde{N}}\leq\Im\xi_n$ for all $n> \tilde{N}$ and
$\sum_{k=0}^\infty e^{-k\Im\xi_n} = (1-e^{-\Im\xi_n})^{-1}
\leq (1-e^{-\Im\xi_{\tilde{N}}})^{-1}$, 
and in the third line \eqref{eq:norm_un} was applied together with the
identity
$$\int_0^{+\infty} \abs{e^{i\xi_n x_1}-e^{-\kappa_nx_1}}^2 \,dx_1=
\frac{\xi_n-i\kappa_n}{2}
\paren{
\frac{1}{\Im\xi_n(\bar\xi_n-i\kappa_n)}+\frac{1}{\kappa_n(\xi_n+i\kappa_n)}},$$
and Proposition \ref{prop:large_char_val}.

To obtain an identity for the $L^2$-distance of the gradients, 
we apply Green's first theorem in $(0,l)\times (0,L)$, use the identity 
$\Delta\paren{\vp_n-\vr_n} =-\omega^2\epsp \vp_n$, and let $l\to\infty$:
\begin{align*}
\Norm{\nabla(\vp_n-\vr_n)}_{L^2(S^+)}^2
=& \omega^2\int_{S^+}\epsp \vp_n (\overline{\vp_n}-\overline{\vr_n})dx \\
&-\int_0^L\diffq{(\vp_n-\vr_n)}{x_1}(0,x_2)(\overline{\vp_n}-\overline{\vr_n})(0,x_2)\,dx_2
\end{align*}
Since $\diffq{(\vp_n-\vr_n)}{x_1}(0,x_2)=i\xi_n u_n(0,x_2)+\kappa_n\tilde{\psi}_n
+\diffq{u_n}{x_1}(0,x_2)$, it follows after adding $\pm i\xi_n \tilde{\psi}_n$ and
using \eqref{eq:estim_fn}, \eqref{eq:estim_uprime_n}, and Proposition  \ref{prop:large_char_val} that 
\linebreak
$\|\diffq{(\vp_n-\vr_n)}{x_1}(0,\cdot)\|_{L^2}\leq C/\sqrt{\kappa_n}$.
Using Cauchy's inequality, \eqref{WL2}, and \eqref{eq:estim_fn} yields 
\begin{equation}\label{Wgrad}
\Norm{\nabla (\vp_n-\vr_n)}_{L^2(S^+)}^2 \leq \frac{C}{\kappa_n^2},\qquad 
n>\tilde{N}.
\end{equation}
Define $K_j:V\to H^{1,+}_{\gamma}(S^+)$ by $K_j(a_n):= \sum_{n=1}^ja_n(\vp_n-\vr_n)$.
Combining \eqref{WL2} and \eqref{Wgrad} and using Cauchy's
inequality, we deduce that
\[
\|(K_{m_2}-K_{m_1})(a_n)\|_{H^1(S^+)}^2
\leq C\paren{\sum_{n=m_1+1}^{m_2} \frac{1}{\kappa_n^3} 
+ \sum_{n=m_1+1}^{m_2}\frac{1}{\kappa_n^2}} \|(a_n)\|^2,
\]
which implies together with \eqref{eq:sigma_ls} 
that $(K_j)$ is a Cauchy sequence with respect to 
the operator norm. Therefore,  $K=\lim_{j\to\infty}K_j$
is well defined, and since the range of the operators $K_j$ is 
finite dimensional, $K$ is compact. Moreover, $T=\Tref+K$ is well-defined and bounded.

\emph{Part 2:} 
Since the differential operator $\Delta + \omega^2\epsp$ is continuous from 
$H^1_{\gamma}(S^+)$ to $H^{-1}_{\gamma}(S^+)$, we can interchange its application
with summation to show that $w:=T(a_n)$ satisfies \eqref{eq:pde}. Analogously, it
follows from the continuity of the trace operators that $w$ satisfies
\eqref{eq:bc_top_bottom}.
\end{proof}

\section{Proof of the Main Theorem}\label{sec:main_thm_proof}
\begin{proposition}\label{prop:T_injective}
The operator $T$ is injective.
\end{proposition}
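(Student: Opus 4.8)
The plan is to show $T$ is injective by proving that if $\sum_{n=1}^\infty a_n v_n = 0$ in $H^{1,+}_\gamma(S^+)$ with $(a_n)\in l^2(\N)$, then all $a_n$ vanish. The key structural fact I would exploit is that the functions $v_n$ are grouped by their quasi momenta $\xi_n$: finitely many distinct $\xi$-values carry all the modes, and the Floquet modes with a common quasi momentum $\xi_0$ span the $\mathcal{T}$-invariant space $\mathcal{V}_{\xi_0}$ described in Proposition \ref{prop:char_val_eig}. First I would use the decay rates to justify that the (possibly infinite) sum makes sense pointwise and can be organized according to the imaginary part $\Im\xi_n$, which is strictly increasing for $n>\overline{n}$ and tends to $+\infty$.

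The core idea is a \emph{separation by decay rate} argument. Since each $v_n$ behaves like $\exp(i\xi_n x_1)$ times a periodic (polynomially modulated) factor, the modes decay at rate $\Im\xi_n$ as $x_1\to\infty$. Suppose for contradiction that the coefficient vector $(a_n)$ is nonzero, and let $n_0$ be the smallest index with $a_{n_0}\neq 0$. I would then isolate the contribution with the \emph{slowest} decay rate $\Im\xi_{n_0}$: multiplying the identity $\sum a_n v_n=0$ by $\exp(\Im\xi_{n_0}\, x_1)$ (or testing against a suitable cross-sectional profile on the slice $\{x_1\}\times(0,L)$ and examining the $x_1\to\infty$ asymptotics), the terms with strictly larger $\Im\xi_n$ die out, forcing the slowest-decaying block to vanish on its own. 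This reduces to showing that within a fixed quasi-momentum block $\mathcal{V}_{\xi_0}$, the finitely many basis modes $v_{j,k}$ (as in \eqref{eq:defi_vj}) are linearly independent — which holds because they form a basis of $\mathcal{V}_{\xi_0}$ by construction, and distinct blocks have distinct quasi momenta $\xi_0$ modulo $2\pi$, hence distinct asymptotic behavior. Peeling off blocks one decay rate at a time, I conclude $a_n=0$ for all $n$.

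The delicate point is making the ``peeling'' rigorous when infinitely many blocks accumulate only at $\Im\xi=+\infty$: one must ensure the tail $\sum_{n>M} a_n v_n$ is genuinely negligible compared to the leading block on an appropriate far-field region. Here the quantitative estimates from Theorem \ref{theo:large_char_val} and Lemma \ref{lemm:T0} are essential, since they control $\|v_n\|$ and the gap between successive decay rates $\Im\xi_n\sim\kappa_n$, so that on the slice $\{x_1\}\times(0,L)$ with $x_1$ large the leading term dominates. Within the finitely many propagating or low-lying blocks one instead argues by finite-dimensional linear independence directly, using the orthogonality/normalization conditions \eqref{eq:orthonormalq} for the real-quasi-momentum modes and the Jordan-chain structure of Proposition \ref{prop:char_val_eig} for the rest.

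I expect the main obstacle to be the uniform control of the infinite tail: one needs that the polynomial factors $x_1^j$ in the higher-order Floquet modes do not spoil the clean ordering by decay rate, and that the $l^2$ summability of $(a_n)$ together with the normalization $\|u_n\|_{L^2(\Omega)}^2=(1+\kappa_n^2)^{-1/2}$ is strong enough to dominate the tail by the leading block. A cleaner alternative, which I would try first, is to avoid asymptotics altogether and instead invoke the Floquet/characteristic-value structure directly: the $v_n$ are eigen- and root-functions of the translation operator $\mathcal{T}$ for distinct eigenvalues $\lambda=\exp(i\xi_n)$ on the finitely many blocks of size $>1$, and a vanishing combination would contradict the linear independence of generalized eigenvectors belonging to distinct eigenvalues, reducing injectivity to the finite-dimensional Jordan theory already established in Proposition \ref{prop:char_val_eig} combined with a density or closedness argument for the infinite part.
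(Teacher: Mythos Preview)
Your overall architecture---peel off modes level by level according to the decay rate $\Im\xi_n$, then handle each level by finite-dimensional linear independence---is exactly what the paper does. But there is a genuine gap in your primary argument: after multiplying by $\exp(\Im\xi_{n_0}\,x_1)$ and discarding the faster-decaying tail, you are left with a finite linear combination of modes sharing the \emph{same} imaginary part $\nu_l$ but possibly \emph{different} real parts $\Re\xi_{k_1},\dots,\Re\xi_{k_M}$. These do not separate by looking at decay; they oscillate. Your phrase ``distinct asymptotic behavior'' does not supply a mechanism. The paper resolves this with a Vandermonde argument: one evaluates the surviving combination at integer shifts $x_1\mapsto x_1+p$, obtaining $\sum_{l=1}^M \exp(i\Re\xi_{k_l})^p g_l\to 0$ as $p\to\infty$; since the matrix $\bigl(\exp(i\Re\xi_{k_l})^{p+j}\bigr)_{j,l}$ factors as a diagonal times a fixed Vandermonde, its inverse has norm independent of $p$, forcing each $g_l=0$. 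This is precisely the translation-operator eigenvalue separation you allude to in your alternative approach, made concrete.

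A second point you flag but do not resolve is the polynomial factors $x_1^j$ in higher-order Floquet modes. The paper handles this cleanly: within a level, let $\overline{m}$ be the maximal order present, multiply by $x_1^{-\overline{m}}$ before taking the limit, and the Vandermonde step then isolates the coefficients of the highest-order terms $u_n^{(\overline{m})}$, which are linearly independent as elements of $\ker B_{\xi_l}$ by the Jordan-chain construction. One then descends in $\overline{m}$. Your proposal would benefit from committing to this two-stage peeling (first by $\Im\xi$, then by polynomial degree, with the Vandermonde step in between) rather than leaving the within-level argument to Proposition~\ref{prop:char_val_eig} alone.
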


\begin{proof}
Let $v:= \sum_{n=1}^{\infty}a_n \vp_n$ for some sequence $(a_n)\in l^2(\N)$
and assume that $v\equiv 0$. We have to show that $a_n=0$ for all $n\in\N$.
Let $\{\nu_1,\nu_2,\dots\} = \{\Im \xi_n:n\in\N\}$ with $0\leq \nu_1<\nu_2<\cdots$.
We show by induction in $l\in\N$ that $a_n=0$ for all $n\in\N$ satisfying 
$\Im \xi_n\leq \nu_l$: Formally adding $\nu_0:=-1$, the induction base is trivial.
For the induction step assume that the statement holds true for $l-1$ and
that there are precisely $M$ distinct characteristic values $\tilde{\xi}_1,
\dots \tilde{\xi}_M$ with $\Im \tilde{\xi}_m=\nu_l$ and $\Re \tilde{\xi}_m\in
[-\pi,\pi)$. Pecularities for the case $l=1$ and $\nu_l=0$ will be discussed at
the end of the proof, so assume for the moment that $\nu_l>0$. 
Suppose that $\mathfrak{n}((B_{\xi}),\tilde{\xi}_m) = 
(r_1^{(m)},\dots,r_{\alpha_m}^{(m)})$. Relabelling the Floquet modes $\vp_n$
and the coefficients $a_n$ corresponding to $\tilde{\xi}_1,\dots,\tilde{\xi}_M$ 
by $v_{j,k}^{(m)}$ and $a_{j,k}^{(m)}$ we obtain from the induction hypothesis that for any $\epsilon>0$ and all $x_2\in [0,L]$
\begin{equation}\label{eq:aux_injective}
\exp(\nu_l x_1) \sum_{m=1}^{M} \sum_{j=1}^{\alpha_m} \sum_{k=0}^{r_j^{(m)}-1}
a_{j,k}^{(m)}  v_{j,k}^{(m)}(x) 
= O(e^{-(\nu_{l+1}-\nu_l-\epsilon)x_1}),\qquad x_1\to\infty
\end{equation}
and have to show that all coefficients $a_{j,k}^{(m)}$ vanish. For simplicity,
let us first assume that all partial null multiplicities $r_j^{(m)}$ are 
equal to $1$. With the notation of Proposition \ref{prop:char_val_eig} 
(and an additional index $m$) the function 
$\exp(-i\tilde{\xi}_m x_1) \sum_{j=1}^{\alpha_m} a_{j,0}^{(m)} v_{j,0}^{(m)}(x)
= \sum_{j=1}^{\alpha_m} a_{j,0}^{(m)} \tilde{u}^{(j)}_{0,m}(x)$
is $1$-periodic in $x_1$ for each $m$. Pick some $x=(x_1,x_2)\in\Omega$ and set
$g_m:= \sum_{j=1}^{\alpha_m} a_{j,0}^{(m)} \tilde{u}^{(j)}_{0,m}(x)$.
Then it follows from \eqref{eq:aux_injective} that 
\begin{align}\label{eq:aux_uniqueness}
&\sum_{m=1}^M\exp(i\Re \tilde{\xi}_m)^p g_m 
= \exp(\nu_l p) \sum_{m=1}^M \exp(i\tilde{\xi}_m p) 
\sum_{j=1}^{\alpha_m} a_{j,0}^{(m)} \tilde{u}^{(j)}_{0,m}(x)\\
&= \exp(\nu_l p) \sum_{m=1}^M \sum_{j=1}^{\alpha_m} a_{j,0}^{(m)} 
v^{(j)}_{0,m}(x_1+p,x_2) \to 0,\qquad \mbox{as } p\to\infty\,,p\in\N\,.\nonumber
\end{align}
If the right hand side would vanish exactly for $M$ consecutive values
of $p$, say $p\in \{q+1,\dots,q+M\}$, we could  conclude immediately that
$g:=(g_1,\dots,g_M)^{\top}\in\C^M$ 
is zero since the matrix $A^{(q)}\in\C^{M\times M}$
defined by $A^{(q)}_{lm} := \exp(i\Re \tilde{\xi}_{m})^{l+q}$, $l,m=1,\dots,M$
is regular as shown below. However, \eqref{eq:aux_uniqueness} only implies that
$\lim_{q\to\infty} A^{(q)}g= 0$. Therefore, we have to control 
$\|[A^{(q)}]^{-1}\|$ uniformly in $q$. For this end note that $A^{(q)}$ 
has a factorization
\[
A^{(q)} = A^{(0)}
 \diag\left(\exp(i\Re \tilde{\xi}_{1})^{q},\dots,\exp(i\Re \tilde{\xi}_{M})^{q}\right)
\]
and $A^{(0)}$ is a Vandermonde matrix. It follows that $A^{(q)}$ is 
invertible and $\|[A^{(q)}]^{-1}\|$
is independent of $q$. Therefore, $\lim_{q\to\infty} A^{(q)}g= 0$
implies $g=0$, 
i.e.~$\sum_{j=1}^{\alpha_m} a_{j,0}^{(m)} \tilde{u}^{(j)}_{0,m}\equiv 0$ 
for each $m=1,\dots,M$.
Since we know from Proposition \ref{prop:char_val_eig} that
$\tilde{u}^{(1)}_{0,m},\dots,\tilde{u}^{(\alpha_m)}_{0,m}$ are linearly independent, 
it follows that $a_{1,0}^{(m)}=\dots = a_{\alpha_m,0}^{(m)}=0$.

Now assume that $\overline{r}:=\max\{r_{j}^{(m)}:m=1,\dots,M,j=1,\dots,\alpha^{(m)}\}$
is greater than $1$. Multiplying \eqref{eq:aux_injective} by $x_1^{-\overline{r}+1}$
and using \eqref{eq:formFloquetBasis} 
it follows that \eqref{eq:aux_uniqueness} holds with
$g_m:= \sum_{j:r^{(m)}_j=\overline{r}} a_{j,\overline{r}-1}^{(m)} \tilde{u}^{(j)}_{0,m}(x)$ (with the convention that an empty sum is $0$),
and it follows as above that $g_1=\cdots = g_M=0$ for all $x\in\Omega$. 
Using again the linear indepence of the functions $\tilde{u}^{(j)}_{0,m}$,
we find that $a_{j,\overline{r}-1}^{(m)}=0$ for all $(j,m)$ such that
$r_j^{(m)}=\overline{r}$. In a second step we show analogously 
that $a_{j,\overline{r}-2}^{(m)}=0$ for all $(j,m)$ such that
$r_j^{(m)}\geq \overline{r}-1$, and so on. 
Finally, all coefficients $a_{j,k}^{(m)}$ have to vanish. 

It remains to discuss the case of propagating modes, i.e.\ the induction step
for $\nu_1=0$ since not all elements of the eigenspaces are considered here.
However, we can use the same technique as above to prove the stronger result
that  $\sum_{n=1}^{\overline{n}}a_n^- v_n^- + \sum_{n=1}^{\infty} a_n^+\vp_n\equiv 0$ 
implies $0=a_1^-=\dots = a_{\overline{n}}^-=a_1^+=a_2^+=\cdots$. 
\end{proof}

The well posedness result in the following proposition follows from general
results in \cite{B:nazarov_plamenevsky}, but we obtain an independent proof
as a side product of our analysis:
\begin{proposition}[well-posedness of problem \eqref{dirprob}]\label{prop:wellposed}
Assume that the only solution $v\in H^{1,+}_{\gamma}(S^+)$ 
to \eqref{dirprob} with $f=0$ is $v=0$. 
Then $F:=\ltrace T$ is bounded and boundedly invertible, and 
problem \eqref{dirprob} is well posed in the sense that for
all $f\in \lbdspace$ there exists a unique solution 
$v\in H^{1,+}_{\gamma}(S^+)$ to \eqref{dirprob}, and $v$ 
depends continuously on $f$. 
\end{proposition}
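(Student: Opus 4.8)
The plan is to show that $F:=\ltrace T\colon l^2(\N)\to\lbdspace$ is bounded and boundedly invertible, after which the well-posedness of \eqref{dirprob} transports through $T$ essentially for free. First I would establish that $F$ is Fredholm of index $0$. Splitting $F = \ltrace T_0 + \ltrace K$ with $K:=T-T_0$, Lemma \ref{lemm:T0}(3) gives that $\ltrace T_0$ is Fredholm of index $0$, and Proposition \ref{prop:T_well_defined}(1) gives that $K$ is compact. It then suffices to check that $\ltrace K$ is compact, so that $F$ is a compact perturbation of a Fredholm operator of index $0$ and hence itself Fredholm of index $0$.

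Next I would prove injectivity of $F$. If $F(a_n)=0$, then $v:=T(a_n)\in H^{1,+}_{\gamma}(S^+)$ satisfies \eqref{eq:pde} and \eqref{eq:bc_top_bottom} by Proposition \ref{prop:T_well_defined}(2), while $\ltrace v = F(a_n)=0$; thus $v$ solves \eqref{dirprob} with $f=0$, so the standing uniqueness hypothesis forces $v=0$, and injectivity of $T$ (Proposition \ref{prop:T_injective}) yields $(a_n)=0$. A Fredholm operator of index $0$ that is injective is bijective, and by the open mapping theorem $F^{-1}$ is bounded. Well-posedness is then immediate: given $f\in\lbdspace$, the function $v:=TF^{-1}f$ lies in $H^{1,+}_{\gamma}(S^+)$, solves \eqref{eq:pde}, \eqref{eq:bc_top_bottom} by Proposition \ref{prop:T_well_defined}(2), and satisfies $\ltrace v = FF^{-1}f=f$, giving existence; uniqueness follows by applying the hypothesis to the difference of two solutions; and $\|v\|_{H^{1,+}_{\gamma}(S^+)}\le \|T\|\,\|F^{-1}\|\,\|f\|_{\lbdspace}$ gives continuous dependence.

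The only genuine technical point is the compactness of $\ltrace K$, and here the two cases of $\lbdspace$ diverge. When $\theta_{\rm N}=0$ the operator $\ltrace$ is a bounded multiple of the Dirichlet trace on $H^{1,+}_{\gamma}(S^+)$, so $\ltrace K$ is compact at once, for instance as the operator-norm limit of the finite-rank operators $a\mapsto\ltrace\sum_{n\le j}a_n(v_n-w_n)$ appearing in the proof of Proposition \ref{prop:T_well_defined}. When $\theta_{\rm N}\neq0$ the Neumann trace enters and $\lbdspace=H^{-1/2}_{\gamma}((0,L))$, so I would upgrade the convergence $K_j\to K$ to the norm of $H^1_{\gamma}(S^+;\Delta)$ using the identity $\Delta(v_n-w_n)=-\omega^2\epsp v_n$ together with the decay of $\|v_n\|_{L^2(S^+)}$ implied by $\Im\xi_n\to\infty$; boundedness of the Neumann trace on $H^1_{\gamma}(S^+;\Delta)$ then again yields compactness of $\ltrace K$. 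I expect this $\Delta$-control to be the main obstacle, though it is supplied directly by the estimates already used to prove that $K$ is compact; everything else is soft functional analysis.
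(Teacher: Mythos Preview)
Your proof is correct and follows the paper's argument essentially verbatim: split $F=\ltrace T_0+\ltrace K$, invoke Lemma~\ref{lemm:T0}(3) and compactness of $\ltrace K$ for the Fredholm index-$0$ property, combine the uniqueness hypothesis with Proposition~\ref{prop:T_injective} for injectivity, and read off well-posedness via $v=TF^{-1}f$. Your extra care in the Neumann case---upgrading $K_j\to K$ to $H^1_\gamma(S^+;\Delta)$, for which one ultimately needs $\sum_n\|v_n\|_{L^2(S^+)}^2<\infty$, a consequence of \eqref{WL2} together with $\|w_n\|_{L^2(S^+)}\sim\kappa_n^{-1}$---is warranted and in fact fills in a step the paper passes over in a single phrase.
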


\begin{proof}
$F$ is a Fredholm operator with index $0$ since
$F=\ltrace \Tref+ \ltrace (T-\Tref)$, and $\ltrace \Tref$ is Fredholm with index $0$ 
by Lemma \ref{lemm:T0}, and $\ltrace (T-\Tref)$ is compact by
Proposition \ref{prop:T_well_defined}. Assume that $F(a_n)=0$ and set $v:=T(a_n)$. 
Then $v$ is a solution to \eqref{dirprob} with $f=0$, and hence $v=0$ by our assumption. 
Using Proposition \ref{prop:T_injective} we conclude that $(a_n)=0$,
i.e.\ $F$ is injective. This implies that $F$ has a bounded inverse. 
Using Proposition \ref{prop:T_well_defined}, it follows that $v:= T F^{-1} f$
is a solution of \eqref{dirprob}, which depends continuously on $f$.
\end{proof}

The proof of our main theorem is now simple:

\begin{proof}[Proof of Theorem \ref{theo:main}]
We start with part 2 of the theorem: Since $F:=\ltrace T$ is bounded and boundedly invertible 
by Proposition \ref{prop:wellposed}, the set $\{\ltrace \vp_n:n\in\N\}$ is a Riesz basis of 
$\lbdspace$. 
It follows from Propositions \ref{prop:wellposed} and \ref{prop:uniqueness_robin}
that \eqref{dirprob} is well posed for 
$\ltrace v = v(0,\cdot)+i\diffq{v}{x_1}(0,\cdot)$.
For this choice of $\ltrace$, the operator $T$ has the bounded left inverse $F^{-1}\ltrace$,
and hence $\{\vp_n:n\in\N\}$ is a Riesz basis of $\mathrm{ran}(T)=V$. 

Since the functions $\vp_n$ are chosen as in Proposition
\ref{prop:char_val_eig}, the matrix representing $\mathcal{T}$ consists
of Jordan blocks. 
Because of Proposition \ref{prop:large_char_val} at most a finite number of
these Jordan blocks have size $>1$. It is straightforward to see that $\mathcal{R}$
is represented by the same matrix. 
\end{proof}


\appendix
\section{On the radiation condition}\label{appendix:radiation}
\renewcommand{\thesection}{A}

The study of solutions to the Helmholtz equation $\Delta v + \omega^2 \epsp v =0$ in $S$
with boundary conditions $\gamma v(x_1,\cdot) = \zerotwo$ for $x_1\in\R$ amounts to
the study of spectral properties of the operator
$A^{(\gamma)}:=- \frac{1}{\epsp}\Delta: H^2_{\gamma}(S)\to L^2(S)$.
Due to the isometry of the Floquet transform, the spectrum of $A_{\gamma}$ is
the union of the spectra of the operators defined by
\[
A_\alpha^{(\gamma)}:=-\frac{1}{\epsp}\Delta_\alpha:\quad  
H^2_{\gamma}(\Omega)\to L^2(\Omega),
\qquad \mbox{for }\alpha\in \R.
\]
Since these operators are positive and self-adjoint in the weighted 
Hilbert space $L^2(\Omega,\epsp)$ with a compact 
resolvent, their spectra consist of a countable number of
positive eigenvalues with finite multiplicities accumulating only at $\infty$:
$$
\sigma(A_\alpha^{(\gamma)})=\{\tilde{\lambda}_m(\alpha):m\in\N\},\qquad \alpha\in [-\pi,\pi).
$$
We assume that the $\tilde{\lambda}_m$ are arranged in increasing order.
The functions $\tilde{\lambda}_m$ are smooth, except at points
where two or more of them cross. Alternatively, the eigenvalues of $A_{\alpha}^{\gamma}$
can be arranged such that they are analytic functions $\lambda_m$ of $\alpha$, 
which have holomorphic extensions to a complex neighborhood $\mathcal{U}$ of 
$[-\pi,\pi)$ (\cite{SW:02}).  
Furthermore, there exists a holomorphic family of eigenfunctions 
$\mathcal{U}\to L^2(\Omega)$, 
$\xi\mapsto w_{n,\xi}$:
\begin{equation}\label{eq:eigenvec_alpha}
\Delta_{\xi} w_{m,\xi} + \lambda_m(\xi) \epsp w_{m,\xi} =0,
\qquad \xi\in \mathcal{U}, m\in\N.
\end{equation}

Note that if $\xi^*\in [-\pi,\pi)$ is a 
characteristic value of $(B_\xi)$, then 
\begin{equation}\label{eq:kernel_Bxi}
\ker B_{\xi^*} = \ker(A^{(\gamma)}_{\xi^*}-\omega^2I) 
= \mathrm{span}\{w_{m,\xi^*}:\exists m\in\N \,\,\lambda_m(\xi^*)=\omega^2\}.
\end{equation}
\begin{proposition}\label{prop:degenerate_char_val}
$\xi^*\in [-\pi,\pi)$ is a characteristic value of
$(B_\xi)$ with a partial null multiplicity greater than 1 if and only if there
exists $m\in\N$ such that $\lambda_m(\xi^*)=\omega^2$ and $\lambda_m'(\xi^*)=0$. 
%
\end{proposition}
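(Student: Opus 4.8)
The plan is to build everything on the factorization of $B_\xi$ along the analytic eigenbranches $w_{m,\xi}$ supplied by \eqref{eq:eigenvec_alpha}. Subtracting that identity from the definition of $B_\xi$ gives, for every $m$,
\[
B_\xi w_{m,\xi} = \Delta_\xi w_{m,\xi} + \omega^2\epsp w_{m,\xi} = \paren{\omega^2-\lambda_m(\xi)}\epsp w_{m,\xi}.
\]
The second, crucial ingredient is that $B_{\xi^*}$ is self-adjoint in $L^2(\Omega)$ for real $\xi^*$: since $\Delta$ is self-adjoint, $i\partial_{x_1}$ is self-adjoint (the $x_1$-direction is a torus), and $\epsp$ is real, one reads off from \eqref{eq:defi_Delta_xi} that $\Delta_\xi^*=\Delta_{\bar\xi}$, hence $B_\xi^*=B_{\bar\xi}$, so $B_{\xi^*}^*=B_{\xi^*}$ when $\xi^*=\bar{\xi^*}$. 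This is exactly why the statement is restricted to $\xi^*\in[-\pi,\pi)$.

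For the ``if'' direction I would argue directly. If $\lambda_{m_0}(\xi^*)=\omega^2$ and $\lambda_{m_0}'(\xi^*)=0$ for some $m_0$, then $u_\xi:=w_{m_0,\xi}$ is a root function of $(B_\xi)$ at $\xi^*$: the displayed factorization gives $B_\xi u_\xi=(\omega^2-\lambda_{m_0}(\xi))\epsp w_{m_0,\xi}$, and since $\omega^2-\lambda_{m_0}$ vanishes to order $\geq 2$ at $\xi^*$ while $\epsp w_{m_0,\xi}$ is holomorphic and nonzero there, the product vanishes to order $\geq 2$. Hence $\rank w_{m_0,\xi^*}\geq 2$ and the corresponding partial null multiplicity exceeds $1$.

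The harder converse is where the self-adjoint structure pays off. A partial null multiplicity $>1$ forces (the largest $r_j$ being the maximal rank) an eigenvector $\overline u\in\ker B_{\xi^*}$ of rank $\geq 2$, i.e.\ a root function $u_\xi=\overline u+(\xi-\xi^*)u_1+\cdots$ with $B_\xi u_\xi=O((\xi-\xi^*)^2)$. Expanding $B_\xi=B_{\xi^*}+(\xi-\xi^*)B_{\xi^*}'+\cdots$ with $B_{\xi^*}'=2i\partial_{x_1}-2\xi^*$, the first-order term must vanish, so $B_{\xi^*}'\overline u+B_{\xi^*}u_1=0$ is solvable, which happens precisely when $B_{\xi^*}'\overline u\in\mathrm{ran}\,B_{\xi^*}$. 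As $B_{\xi^*}$ is self-adjoint and Fredholm, its range is $(\ker B_{\xi^*})^\perp$, and by \eqref{eq:kernel_Bxi} the kernel is spanned by the cluster $\{w_{m,\xi^*}:m\in M\}$ with $M:=\{m:\lambda_m(\xi^*)=\omega^2\}$; thus rank $\geq 2$ is equivalent to $\lsp B_{\xi^*}'\overline u,w_{k,\xi^*}\rsp=0$ for all $k\in M$. To evaluate these pairings I would differentiate \eqref{eq:eigenvec_alpha} at $\xi^*$ (elliptic regularity places $\partial_\xi w_{m,\xi^*}\in H^2_\gamma(\Omega)$), giving
\[
B_{\xi^*}'w_{m,\xi^*}+B_{\xi^*}\partial_\xi w_{m,\xi^*}+\lambda_m'(\xi^*)\epsp w_{m,\xi^*}=0,\qquad m\in M.
\]
Pairing with $w_{k,\xi^*}\in\ker B_{\xi^*}$ and using self-adjointness to annihilate the middle term yields $\lsp B_{\xi^*}'w_{m,\xi^*},w_{k,\xi^*}\rsp=-\lambda_m'(\xi^*)G_{mk}$, where $G_{mk}:=\lsp\epsp w_{m,\xi^*},w_{k,\xi^*}\rsp$ is the Gram matrix of the cluster eigenfunctions in the weighted inner product, hence positive definite by their linear independence. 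Writing $\overline u=\sum_{m\in M}c_m w_{m,\xi^*}$, the orthogonality condition becomes $\sum_m c_m\lambda_m'(\xi^*)G_{mk}=0$ for all $k$, and invertibility of $G$ forces $c_m\lambda_m'(\xi^*)=0$ for every $m$; since $\overline u\neq 0$ some $c_{m_0}\neq 0$, so $\lambda_{m_0}(\xi^*)=\omega^2$ and $\lambda_{m_0}'(\xi^*)=0$.

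I expect the main obstacle to be exactly this converse: converting the algebraic rank condition of Definition \ref{defi:char_val} into the analytic critical-point condition $\lambda_m'(\xi^*)=0$. The two devices that break the deadlock are the self-adjointness of $B_{\xi^*}$ at real $\xi^*$, which turns ``solvability of the first-order equation'' into a clean orthogonality to $\ker B_{\xi^*}$, and the differentiated eigenvalue equation, which exposes $\lambda_m'(\xi^*)$ in the pairings. The only technical care needed is the regularity of $\partial_\xi w_{m,\xi^*}$ and the invertibility of $G$ at a branch crossing, both of which are routine.
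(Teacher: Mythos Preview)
Your argument is correct and essentially matches the paper's proof. Both directions hinge on the same two devices you identified: the self-adjointness of $B_{\xi^*}$ for real $\xi^*$ (so that $\mathrm{ran}\,B_{\xi^*}=(\ker B_{\xi^*})^\perp$) and the differentiated eigenvalue equation, which injects $\lambda_m'(\xi^*)$ into the first-order solvability condition; the only cosmetic difference is that the paper pairs the range element directly with $\sum_m \nu_m\lambda_m'(\xi^*)w_{m,\xi^*}\in\ker B_{\xi^*}$ to obtain $\int_\Omega\epsp\big|\sum_m\nu_m\lambda_m'(\xi^*)w_{m,\xi^*}\big|^2\,dx=0$, whereas you pair with each $w_{k,\xi^*}$ separately and invoke invertibility of the Gram matrix $G$---the two formulations are equivalent.
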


\begin{proof} 
Suppose that $\omega^2=\lambda_m(\xi^*)$ and $\lambda_m'(\xi^*)=0$. 
Taking the derivative of \eqref{eq:eigenvec_alpha} with respect to $\xi$,
which will be indicated by a prime in the rest of this proof, yields
\begin{equation}\label{eq:wmprime}
\left(\Delta_{\xi}+\lambda_m(\xi)\epsp\right)w'_{m,\xi}
+2i(\partial_{x_1}+i\xi)w_{m,\xi}+\lambda_m'(\xi) \epsp w_{m,\xi}=0.
\end{equation}
Since $\lambda_m'(\xi^*)=0$, we have
\[
\frac{dB_\xi w_{m,\xi}}{d\xi}|_{\xi=\xi^*}=0
\qquad \mbox{and}\qquad 
B_{\xi^*}w_{m,\xi^*}=0
\]
Therefore,  $(w_{m,\xi})$ is a root function of $(B_\xi)$ corresponding to
$\xi^*$ with a partial null multiplicity greater than $1$.

Conversely, assume that $\xi^*$ is a characteristic value of $B_\xi$
with a partial null multiplicity greater than 1. Then, there exists a root function
$(u_\xi)$ such that $B_{\xi^*} u_{\xi^*}=0$
and
\begin{equation}\label{EqBuuprime}
B_{\xi^*} u'_{\xi^*}+B'_{\xi^*}u_{\xi^*}=0.
\end{equation}
Let $\Xi(\omega^2,\xi^*):=\{m\in\N:\lambda_m(\xi^*)=\omega^2\}$.
Due to \eqref{eq:kernel_Bxi} there exist coefficients $\nu_m\in \C$ such that
$w_{\xi^*} = u_{\xi^*}$ with $w_{\xi}:=\sum_{m\in\Xi(\omega^2,\xi^*)}\nu_m w_{m,\xi}$. 
Taking a linear combination of the equations \eqref{eq:wmprime} and
subtracting \eqref{EqBuuprime}, we get
\begin{equation}\label{eq:aux_Jordanreal}
B_{\xi^*}\paren{w'_{\xi^*}-u'_{\xi^*}}
= - \sum_{m\in \Xi(\omega^2,\xi^*)} \lambda_m'(\xi^*) \nu_m \epsp w_{m,\xi^*}
\end{equation}
The right hand side of \eqref{eq:aux_Jordanreal} belongs to
$\mathrm{ran}(B_{\xi^*})$. 
Since $\xi^*\in\R$, $B_{\xi^*}$ is self-adjoint in $L^2(\Omega)$, 
and hence $\ker(B_{\xi^*}) = \mathrm{ran}(B_{\xi^*})^{\perp}$. 
As $$\sum_{m\in \Xi(\omega^2,\xi^*)} \lambda_m'(\xi^*) \nu_m w_{m,\xi^*}\in\ker(B_{\xi^*}),$$ we have
\[\int_\Omega \epsp \abs{\sum_{m\in \Xi(\omega^2,\xi^*)} \lambda_m'(\xi^*) \nu_m w_{m,\xi^*}}^2\,dx =0.
\]
As the functions $w_{m,\xi^*}$ are linearly independent, it follows 
that $\lambda_m'(\xi^*) \nu_m=0$ for all $m\in \Xi(\omega^2,\xi^*)$. 
Since not all $\nu_m$ vanish, we obtain that $\lambda_m'(\xi^*)=0$ for some $m$.
\end{proof}

Recall that the group velocity of a Floquet mode $w_{m,\xi}(x)e^{i\xi x_1}$
is given by
\begin{equation}
\frac{d\omega}{d\xi} = \frac{d \sqrt{\lambda_m(\xi)}}{d\xi} 
= \frac{\lambda_m'(\xi)}{2\sqrt{\lambda_m(\xi)}}.
\end{equation}

\begin{proposition}\label{prop:radiation}
For a Floquet mode of the form $v(x)=w_{m,\xi^*}(x)
\linebreak
e^{i\xi^* x_1}$,
$\lambda_m(\xi^*)= \omega^2$ with
non vanishing group velocity, i.e. $\lambda_m'(\xi^*)\neq 0$ the
following statements are equivalent:
\begin{enumerate}
\item\label{it:energy_flux}
$v$ has positive energy flux, i.e.~$\Im q(v,v)>0$.
\item\label{it:group_vel}
$v$ has positive group velocity, i.e.\ $\lambda_m'(\xi^*)>0$.
\end{enumerate}
Moreover, if $\tilde{v}(x)=w_{n,\xi^*}(x)e^{i\xi^* x_1}$ is another Floquet mode with 
$\lambda_n(\xi^*)=\omega^2$ and $m\neq n$, then
$q(v,\tilde{v})=0$. 
\end{proposition}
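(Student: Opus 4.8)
The plan is to reduce the entire proposition to the single identity
\begin{equation}\label{eq:q_vs_lambdaprime}
q(v,\tilde{v}) = i\,\lambda_m'(\xi^*)\int_{\Omega}\epsp\, w_{m,\xi^*}\,\overline{w_{n,\xi^*}}\,dx ,
\end{equation}
after which both claims are immediate. Throughout I write $\lsp\cdot,\cdot\rsp$ for the $L^2(\Omega)$ inner product, and I exploit that, since $\xi^*\in\R$, the operator $\Delta_{\xi^*}+\omega^2\epsp$ is self-adjoint on $L^2(\Omega)$, exactly as in the proof of Proposition \ref{prop:degenerate_char_val}.

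First I would rewrite $q$ as an integral over the period cell $\Omega$. Because $v$ and $\tilde v$ solve \eqref{eq:pde} and \eqref{eq:bc_top_bottom}, the form $q(v,\tilde v)$ of \eqref{eq:defi_q} is independent of $x_1$ and hence equals its average over $x_1\in(0,1)$. Writing $\partial_{x_1}v = e^{i\xi^* x_1}(\partial_{x_1}+i\xi^*)w_{m,\xi^*}$ and noting that the factors $e^{\pm i\xi^* x_1}$ cancel since $\xi^*$ is real, a single integration by parts in $x_1$ (whose boundary terms vanish by the $x_1$-periodicity of $w_{m,\xi^*}$ and $w_{n,\xi^*}$) gives
\begin{equation}\label{eq:q_as_inner}
q(v,\tilde v) = 2\,\lsp(\partial_{x_1}+i\xi^*)w_{m,\xi^*},\,w_{n,\xi^*}\rsp .
\end{equation}

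The second ingredient is the $\xi$-derivative of the band equation. Evaluating \eqref{eq:wmprime} at $\xi=\xi^*$ (where $\lambda_m(\xi^*)=\omega^2$) and pairing with $w_{n,\xi^*}$, the term containing $w'_{m,\xi^*}$ drops out: since $\lambda_n(\xi^*)=\omega^2$ we have $w_{n,\xi^*}\in\ker(\Delta_{\xi^*}+\omega^2\epsp)$, so by self-adjointness $\lsp(\Delta_{\xi^*}+\omega^2\epsp)w'_{m,\xi^*},w_{n,\xi^*}\rsp=\lsp w'_{m,\xi^*},(\Delta_{\xi^*}+\omega^2\epsp)w_{n,\xi^*}\rsp=0$. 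What survives is
\begin{equation}\label{eq:inner_relation}
2i\,\lsp(\partial_{x_1}+i\xi^*)w_{m,\xi^*},\,w_{n,\xi^*}\rsp + \lambda_m'(\xi^*)\,\lsp\epsp w_{m,\xi^*},w_{n,\xi^*}\rsp = 0 ,
\end{equation}
and eliminating the common inner product between \eqref{eq:q_as_inner} and \eqref{eq:inner_relation} yields \eqref{eq:q_vs_lambdaprime}. Taking $n=m$ in \eqref{eq:q_vs_lambdaprime} shows that $q(v,v)=i\,\lambda_m'(\xi^*)\int_{\Omega}\epsp\abs{w_{m,\xi^*}}^2\,dx$ is purely imaginary with $\Im q(v,v)$ of the same sign as $\lambda_m'(\xi^*)$ (the integral being a strictly positive real number); combined with the group-velocity formula $d\omega/d\xi=\lambda_m'(\xi^*)/(2\omega)$ this is precisely the equivalence of \ref{it:energy_flux} and \ref{it:group_vel}.

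For the orthogonality statement I would use that $q$ is skew-Hermitian, $q(\tilde v,v)=-\overline{q(v,\tilde v)}$. Applying \eqref{eq:q_vs_lambdaprime} to both sides and using that $\lambda_m'(\xi^*),\lambda_n'(\xi^*)$ are real forces $\paren{\lambda_m'(\xi^*)-\lambda_n'(\xi^*)}\int_{\Omega}\epsp w_{m,\xi^*}\overline{w_{n,\xi^*}}\,dx=0$; since $\lambda_m'(\xi^*)\neq\lambda_n'(\xi^*)$ the integral vanishes, and then \eqref{eq:q_vs_lambdaprime} gives $q(v,\tilde v)=0$. The step I expect to require the most care is \eqref{eq:inner_relation}: one must know that $w'_{m,\xi^*}$ is a genuine element of the domain $H^2_\gamma(\Omega)$ (supplied by the holomorphic family in \eqref{eq:eigenvec_alpha}) so that the self-adjointness pairing is legitimate; the reduction \eqref{eq:q_as_inner} is then routine bookkeeping using the $x_1$-independence of $q$ and periodicity.
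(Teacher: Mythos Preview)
Your argument is correct and follows the same backbone as the paper: average $q$ over one period to get an integral over $\Omega$, differentiate the eigenvalue equation \eqref{eq:eigenvec_alpha} in $\xi$, and use self-adjointness of $\Delta_{\xi^*}+\omega^2\epsp$ to discard the $w'_{m,\xi^*}$ term. The only real difference is in the orthogonality step. The paper keeps both terms in $q$ and applies the differentiated equation once for $m$ and once for $n$, arriving at the symmetric formula $q(v,\tilde v)=\tfrac{i}{2}(\lambda_m'(\xi^*)+\lambda_n'(\xi^*))\int_\Omega\epsp w_{m,\xi^*}\overline{w_{n,\xi^*}}\,dx$; it then invokes that the analytic family $(w_{m,\xi})_m$ is $\epsp$-orthonormal for all real $\xi$ to kill the integral. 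You instead integrate one term by parts first to get \eqref{eq:q_as_inner}, obtain the asymmetric formula \eqref{eq:q_vs_lambdaprime}, and extract the $\epsp$-orthogonality directly from the skew-Hermiticity of $q$ together with $\lambda_m'(\xi^*)\neq\lambda_n'(\xi^*)$. Your route is slightly more self-contained, since it does not rely on the orthonormal choice of the analytic eigenvector family (which at the crossing $\xi^*$ is a genuine choice rather than an automatic consequence of self-adjointness, both eigenvectors belonging to the same eigenvalue $\omega^2$).
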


\begin{proof}
Since $q_{x_1}(v,\tilde{v})=q(v,\tilde{v})$ is independent of $x_1$, it follows that 
\begin{align*}
q(v,\tilde v)=&
\int_{0}^L \bracket{\frac{\partial v}{\partial x_1}(x) \overline{\tilde{v}(x)}
- v(x)\overline{\frac{\partial \tilde{v}}{\partial x_1}(x)}}\, dx_2 \\
=& \int_0^1\int_0^L \bracket{\frac{\partial v}{\partial x_1}(x) \overline{\tilde{v}}(x)
- v(x)\overline{\frac{\partial \tilde{v}}{\partial x_1}(x)}}\, dx_2\,dx_1 \\
=& \int_\Omega \bracket{(\partial_{x_1}+i\xi^*)w_{m,\xi^*}(x) \overline{w_{n,\xi^*}(x)} 
-w_{m,\xi^*}(x) \overline{(\partial_{x_1}+i\xi^*)w_{n,\xi^*}(x)} }\, dx .
\end{align*}
Taking the $L^2$ inner product with $w_{n,\xi^*}$ in \eqref{eq:wmprime} and using 
$\lsp B_{\xi^*}w'_{m,\xi^*},w_{n,\xi^*}\rsp = \lsp w'_{m,\xi^*}, B_{\xi^*}w_{n,\xi^*}\rsp =0$
and analogously with the roles of $w_{m,\xi^*}$ and $w_{n,\xi^*}$ interchanged 
we obtain that 
\[
q(v,\tilde{v})=\frac{i}{2}(\lambda'_m(\xi^*) + \lambda_n'(\xi^*))
\int_{\Omega} \epsp w_{m,\xi^*} \overline{w_{n,\xi^*}}\, dx.
\]
Since $w_{m,\xi}$ and $w_{n,\xi}$ are orthogonal with respect to the inner product
in
\linebreak
$L^2(\Omega,\epsp)$ for all $\xi\in\R$ as eigenfunctions of the self-adjoint operators
$A_{\xi}^{\gamma}$ in this space, we obtain the last statement. 
Choosing $\tilde{v}=v$ shows the equivalence result.
\end{proof}

\begin{remark}
Two further important items could be added to the list of equivalent statements
in Proposition \ref{prop:radiation}, which we do not want to define in detail here: 
It has been shown by Fliss \cite[Theorem 3.2.57]{Fliss:09} that a Floquet mode
has positive group velocity if and only if it satisfies the limit absorption principle. Moreover, the equivalence of  the principles of limit absorption and limit amplitude
has been shown in the thesis of Radosz \cite{radosz:10}. 
These are very strong indications that solutions with positive group velocity are 
``physical solutions'', and Proposition \ref{prop:radiation} shows that
for frequencies $\omega$ for which all real characteristic values have total multiplicity 1
(cf.\ Proposition \ref{prop:degenerate_char_val} for a characterization), 
the conditions \eqref{eq:orthonormalq} are satisfied precisely for the 
physical Floquet modes (up to normalization).

However, if two (or more) bands $\lambda_m$ and $\lambda_n$ cross at $\omega^2$,
i.e.\ $\lambda_m(\xi^*)=\lambda_n(\xi^*)=\omega^2$ for some $\xi^*\in [-\pi,\pi)$ and if 
$\lambda_m'(\xi^*)\lambda_n'(\xi^*)<0$, then a system of Floquet modes
satisfying \eqref{eq:orthonormalq}
does not necessarily satisfy the limiting absorption principle.
E.g., the Floquet modes $v_a:=\sqrt{2}v+\tilde{v}$,
$v_b:=\sqrt{2}\tilde{v}+v$ satisfy \eqref{eq:orthonormalq}, but $v_a$ does
not satisfy the limiting absorption principle. 
This shows in particular that Conjecture 4.2 
in \cite{ESZ:09} is false in general,  but true if all real characteristic values have 
total multiplicity 1. 

For Floquet modes with group velocity $0$ we do not know which are
the physical solutions. A system of Floquet modes satisfying \eqref{eq:orthonormalq}
is constructed in \cite{B:nazarov_plamenevsky}, but we are not aware of indications
that they correspond to physical solutions. 
\end{remark}

%

\renewcommand{\thesection}{Appendix B}
\section{Uniqueness results}\label{appendix:uniqueness}
\renewcommand{\thesection}{B}
In this appendix we discuss conditions under which solutions to the boundary value
problem \eqref{dirprob} are unique. In this case problem \eqref{dirprob} is well posed 
(Proposition \ref{prop:wellposed}), and the second part of
our main theorem \ref{theo:main} holds true. Proposition \ref{prop:uniqueness_robin}
is used in the proof of Theorem \ref{theo:main}. 

\begin{lemma}\label{lemm:reduction_toH1}
Assume that either $\theta_{\rm N}=0$ (Dirichlet condition) or
$\theta_{\rm N}=1$ and 
$\Im \theta_{\rm D}\geq 0$.
Moreover, assume that the only solution 
$v\in H^1_{\gamma}(S^+)$ to the boundary value problem 
\eqref{eq:pde}-\eqref{eq:bc_left} with $f=0$ is $v=0$. 
Then the only solution  $v\in H^{1,+}_{\gamma}(S^+)$ to
\eqref{eq:pde}-\eqref{eq:bc_left} with $f=0$ is $v=0$. 
\end{lemma}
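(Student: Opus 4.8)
The plan is to exploit the $x_1$-independence of the energy-flux form $q$ from \eqref{eq:defi_q} by evaluating $q(v,v)$ in two ways for a putative nontrivial solution: once at the left boundary $x_1=0$ using \eqref{eq:bc_left} with $f=0$, and once in the far field $x_1\to\infty$ using the radiation decomposition. The boundary evaluation forces $\Im q(v,v)\leq 0$, while the far-field evaluation forces $\Im q(v,v)\geq 0$; pinching $\Im q(v,v)$ to $0$ kills the propagating part of $v$, reducing the problem to the assumed uniqueness in $H^1_\gamma(S^+)$.

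Concretely, let $v\in H^{1,+}_\gamma(S^+)$ solve \eqref{eq:pde}, \eqref{eq:bc_top_bottom}, \eqref{eq:bc_left} with $f=0$, and write $v=\tilde v+\sum_{n=1}^{\overline{n}}\alpha_n v_n^+$ with $\tilde v\in H^1_\gamma(S^+)$ as in the decomposition following \eqref{eq:orthonormalq}. First I would evaluate $q(v,v)$ at $x_1=0$. If $\theta_{\rm N}=0$ then $\theta_{\rm D}\neq 0$, so $\ltrace v=0$ gives $v(0,\cdot)=0$ and both terms in \eqref{eq:defi_q} vanish, whence $q(v,v)=0$. If $\theta_{\rm N}=1$, then $\ltrace v=0$ reads $\partial_{x_1}v(0,\cdot)=-\theta_{\rm D}v(0,\cdot)$; inserting this into \eqref{eq:defi_q} yields $q(v,v)=(\overline{\theta_{\rm D}}-\theta_{\rm D})\|v(0,\cdot)\|_{L^2((0,L))}^2=-2i\,\Im(\theta_{\rm D})\|v(0,\cdot)\|_{L^2((0,L))}^2$, so that $\Im q(v,v)=-2\,\Im(\theta_{\rm D})\|v(0,\cdot)\|_{L^2((0,L))}^2\leq 0$ by hypothesis. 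In either case $\Im q(v,v)\leq 0$.

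Next I would compute the same ($x_1$-independent) number in the limit $x_1\to\infty$. Expanding $q(v,v)$ sesquilinearly produces the self term $q(\tilde v,\tilde v)$, the cross terms $q(\tilde v,v_n^+)$ and $q(v_n^+,\tilde v)$, and the propagating block $\sum_{n,m}\alpha_n\overline{\alpha_m}\,q(v_n^+,v_m^+)$, which equals $i\sum_n|\alpha_n|^2$ by \eqref{eq:orthonormalq}. Since $\tilde v$, $v_n^+$, and $v$ all solve \eqref{eq:pde} and \eqref{eq:bc_top_bottom}, each of these forms is itself $x_1$-independent, so it suffices to evaluate the $\tilde v$-terms along any sequence $x_1\to\infty$. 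As $\tilde v\in H^1_\gamma(S^+)$, both $\|\tilde v(x_1,\cdot)\|_{L^2((0,L))}^2$ and $\|\partial_{x_1}\tilde v(x_1,\cdot)\|_{L^2((0,L))}^2$ are integrable in $x_1$, hence tend to $0$ along a common sequence; the Cauchy--Schwarz bound on \eqref{eq:defi_q} then shows $q(\tilde v,\tilde v)$ and each $q(\tilde v,v_n^+)$ vanish. Thus $q(v,v)=i\sum_n|\alpha_n|^2$ and $\Im q(v,v)=\sum_n|\alpha_n|^2\geq 0$. Combined with the boundary bound this gives $\sum_n|\alpha_n|^2=0$, so all $\alpha_n=0$ and $v=\tilde v\in H^1_\gamma(S^+)$, and the assumed uniqueness in $H^1_\gamma(S^+)$ finishes the proof.

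The main obstacle is the vanishing of the far-field cross terms $q(\tilde v,v_n^+)$, since the propagating modes $v_n^+$ do not decay: I must ensure the decay of $\tilde v$ dominates the behavior of $v_n^+$. When the $v_n^+$ are of order $0$ they are bounded and the subsequence argument above suffices; to cover possible higher-order (polynomially growing) propagating modes I would instead invoke the exponential decay of solutions in $H^1_\gamma(S^+)$, made explicit through the Floquet-mode expansion established in {\S}\ref{sec:rapid_decay_modes}--\ref{sec:main_thm_proof}, which beats any polynomial factor.
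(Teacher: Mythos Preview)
Your argument follows the same line as the paper's: compute $\Im q(v,v)$ at $x_1=0$ from the boundary condition, compute it at infinity from the decomposition $v=\tilde v+\sum_n\alpha_n v_n^+$ together with \eqref{eq:orthonormalq}, and pinch to force $\alpha_n=0$. The only substantive difference is how the cross terms $q(\tilde v,v_n^+)$ and $q(\tilde v,\tilde v)$ are shown to vanish. The paper invokes \cite[Theorem~5.1.4]{B:nazarov_plamenevsky} to obtain $e^{\delta x_1}\tilde v\in H^1_\gamma(S^+)$ for some $\delta>0$, which immediately kills all such terms, including those involving polynomially growing $v_n^+$. Your subsequence argument from mere $H^1$-integrability handles order-$0$ propagating modes cleanly, but for higher-order modes your proposed fallback---the Floquet-mode expansion of \S\ref{sec:rapid_decay_modes}--\ref{sec:main_thm_proof}---is circular: the surjectivity of $T$ onto $V$ (needed to expand an arbitrary $\tilde v\in H^1_\gamma(S^+)$ in decaying Floquet modes) is part of Theorem~\ref{theo:main}, whose proof passes through Proposition~\ref{prop:uniqueness_robin} and hence through this very lemma. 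Replacing that fallback with the external exponential-decay result from \cite{B:nazarov_plamenevsky}, as the paper does, closes the gap without circularity.
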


\begin{proof}
Let $v = \sum_{n=1}^{\overline{n}} c_n\vp_n +w\in H^{1,+}_{\gamma}(S^+)$ be 
a solution to \eqref{eq:pde}-\eqref{eq:bc_left} with $f=0$. Then it follows
from \cite[Theorem 5.1.4]{B:nazarov_plamenevsky} that there exists $\delta>0$ such that
$\exp(\delta x_1)w\in H^1_{\gamma}(S^+)$. Therefore, letting $x_1$ tend to $\infty$
in the definition of $q(w,v_n^{+})$, it follows that $q(w,\vp_n)=0$ for all $j$,
and analogously $q(w,w)=0$.
For the case $\theta_{\rm N}=1$ and 
$\Im \theta_{\rm D}\geq 0$ we obtain from $\diffq{v}{x_1}(0,x_2)=-\theta_{\rm D}v(0,x_2)$
and the orthogonality and normalization conditions
for the Floquet modes $\vp_n$ that
\begin{align*}
0 \geq -2(\Im \theta_{\rm D}) \int_0^L|v(0,x_2)|^2\, dx_2 
= 2\Im \int_0^L \diffq{v}{x_1}(0,x_2)\overline{v(0,x_2)}\, dx_2 
&= \Im q(v,v)\\
&= \sum_{j=1}^{\overline{n}} |c_j|^2. 
\end{align*}
This implies $c_1=\cdots = c_{\overline{n}}=0$ and hence $v=w$. 
For the case $\theta_{\rm N}=0$ it can be shown analogously that $v=w$. 
Now the assumption of the lemma implies $v=0$. 
\end{proof}

\begin{proposition}\label{prop:uniqueness_robin}
Let $\ltrace v = \diffq{v}{x_1}(0,\cdot)+i\kappa v(0,\cdot)$ with $\kappa>0$.
Then the only solution $v\in H^{1,+}_{\gamma}(S^+)$ to \eqref{dirprob} with $f=0$
is $v=0$. 
\end{proposition}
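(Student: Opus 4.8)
The plan is to reduce the claim to a uniqueness statement in the decaying space $H^1_\gamma(S^+)$ and then combine the dissipativity of the Robin condition with unique continuation.

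First I would check that Lemma~\ref{lemm:reduction_toH1} is applicable. Writing the left trace as $\ltrace v=\theta_{\rm D}v(0,\cdot)+\theta_{\rm N}\diffq{v}{x_1}(0,\cdot)$, the present choice corresponds to $\theta_{\rm N}=1$ and $\theta_{\rm D}=i\kappa$, so that $\Im\theta_{\rm D}=\kappa>0$. Hence the hypotheses of Lemma~\ref{lemm:reduction_toH1} hold, and it suffices to prove that the only $v\in H^1_\gamma(S^+)$ solving \eqref{eq:pde}, \eqref{eq:bc_top_bottom} and \eqref{eq:bc_left} with $f=0$ is $v=0$; the lemma then promotes this to the full space $H^{1,+}_\gamma(S^+)$.

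For the reduced problem, let $v\in H^1_\gamma(S^+)$ be such a solution. By \cite[Theorem 5.1.4]{B:nazarov_plamenevsky} it decays exponentially, so that all boundary contributions as $x_1\to\infty$ vanish. I would then test $\Delta v+\omega^2\epsp v=0$ against $\bar v$ and integrate by parts over $S^+$. The contributions from the top and bottom cancel for every admissible $\gamma$: for $\gamma_{\rm D}$, $\gamma_{\rm N}$ and $\gamma_{\rm DN}$ this is because at each of $x_2=0,L$ either $v$ or $\diffq{v}{x_2}$ vanishes, and for $\gamma_\beta$ the two contributions at $x_2=0$ and $x_2=L$ cancel by quasi-periodicity. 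The only surviving boundary term is on $\{0\}\times(0,L)$, where the outward normal derivative is $-\diffq{v}{x_1}$; inserting the relation $\diffq{v}{x_1}(0,\cdot)=-i\kappa\,v(0,\cdot)$ which follows from $\ltrace v=0$, I obtain
\begin{equation*}
\int_{S^+}\abs{\nabla v}^2\,dx-\omega^2\int_{S^+}\epsp\abs{v}^2\,dx=i\kappa\Norm{v(0,\cdot)}_{L^2((0,L))}^2.
\end{equation*}
As $\epsp$ is real-valued the left-hand side is real, so taking imaginary parts yields $\kappa\Norm{v(0,\cdot)}_{L^2((0,L))}^2=0$; since $\kappa>0$ this forces $v(0,\cdot)=0$, and then the Robin relation gives $\diffq{v}{x_1}(0,\cdot)=0$ as well.

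It remains to conclude $v\equiv 0$ from the vanishing of both Cauchy data of $v$ on $\{0\}\times(0,L)$. I would extend $v$ by zero to $\{x_1<0\}$; because both traces vanish, the extension $\tilde v$ is a weak solution of $\Delta\tilde v+\omega^2\epsp\tilde v=0$ on the whole strip $S$ with no spurious interface term, and $\tilde v$ vanishes identically on the open set $S\cap\{x_1<0\}$. The unique continuation principle for second-order elliptic operators with bounded potential then gives $\tilde v\equiv 0$ on $S$, hence $v\equiv 0$. I expect this last step to be the crux: it is the only non-elementary ingredient, since $\epsp$ is merely in $L^\infty$, so one must rely on a unique continuation theorem valid for real, bounded, measurable potentials in the plane rather than on an energy argument.
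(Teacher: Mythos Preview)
Your proof is correct and follows essentially the same route as the paper: reduce to $H^1_\gamma(S^+)$ via Lemma~\ref{lemm:reduction_toH1}, exploit the dissipativity of the Robin condition to kill both Cauchy data on $\{0\}\times(0,L)$, and finish with unique continuation for $L^\infty$ potentials in the plane. The only cosmetic differences are that the paper controls the far-field contribution by the elementary sequence argument $q_{R_k}(v,v)\to 0$ for $v\in H^1$ (rather than invoking exponential decay from \cite{B:nazarov_plamenevsky}), and that it cites \cite[Corollary~7.4.2]{schulz:90} directly for the unique continuation step instead of phrasing it via the zero extension; neither changes the substance.
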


\begin{proof}
According to Lemma \ref{lemm:reduction_toH1} it suffices to consider solutions $v$
to \eqref{dirprob} with $f=0$ in $H^1_{\gamma}(S^+)$. 
Then there exists a sequence $R_k$ tending to $\infty$ such that 
$\lim_{k\to\infty} |\int_0^L \diffq{v}{x_1}(R_k,x_2)\overline{v(R_k,x_2)}\,dx_2| = 0$, and
hence $q(v,v)=0$. Therefore,
\[
0 = \Im q(v,v) = 2 \Im \int_0^L \diffq{v}{x_1}(0,x_2)\overline{v}(0,x_2)\,dx_2
= 2\kappa \int_0^L |v(0,x_2)|^2\,dx_2.
\]
Hence, $v(0,\cdot)=0$, and $\diffq{v}{x_1}(0,\cdot)= i\kappa v(0,\cdot)=0$.
Now a unique continuation principle in two dimensions (see 
\cite[Corollary 7.4.2]{schulz:90}) implies that $v\equiv 0$. 
\end{proof}

\begin{proposition}\label{prop:uniqueness_diri_neum}
Assume that $\ltrace$ is either the Dirichlet or the Neumann trace and that
$\epsp$ satisfies the symmetry condition $\epsp(1-x_1,x_2)=\epsp(x_1,x_2)$.
Then the only solution $v\in H^{1,+}_{\gamma}(S^+)$ to \eqref{dirprob} with $f=0$
is $v=0$. 
\end{proposition}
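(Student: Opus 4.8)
The plan is to reduce the half-strip problem to a genuine eigenvalue problem on the full strip $S$ by reflecting across $\{x_1=0\}$, exploiting the symmetry of $\epsp$, and then to contradict the discreteness of characteristic values.

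First I would use Lemma \ref{lemm:reduction_toH1} to reduce to a decaying solution. The Dirichlet trace corresponds to $\theta_{\rm N}=0$ and the Neumann trace to $\theta_{\rm N}=1$, $\theta_{\rm D}=0$ (so $\Im\theta_{\rm D}=0\geq 0$); in either case the hypotheses of Lemma \ref{lemm:reduction_toH1} hold. Hence it suffices to prove that the only $v\in H^1_{\gamma}(S^+)$ solving \eqref{eq:pde}, \eqref{eq:bc_top_bottom} with $\ltrace v=0$ is $v=0$, i.e. to verify the uniqueness hypothesis required by that lemma.

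Next I would reflect. Combining the periodicity of $\epsp$ with the symmetry assumption gives $\epsp(-x_1,x_2)=\epsp(1-x_1,x_2)=\epsp(x_1,x_2)$, so $\epsp$ is even in $x_1$. I then extend $v$ to all of $S$ by setting $V(x_1,x_2):=v(|x_1|,x_2)$ in the Neumann case and $V(x_1,x_2):=\mathrm{sgn}(x_1)\,v(|x_1|,x_2)$ in the Dirichlet case. The vanishing left data is exactly what makes $V$ an admissible weak solution: in the Neumann case $\diffq{v}{x_1}(0,\cdot)=\zerotwo$ ensures the interior normal derivatives match across the line (no distributional jump in $\Delta V$), while in the Dirichlet case $v(0,\cdot)=\zerotwo$ ensures continuity, and in both cases the reflection of an $H^1$ function is again $H^1$. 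Since $\epsp$ is even and the condition $\gamma V(x_1,\cdot)=\zerotwo$ is preserved under the reflection, $V\in H^1_{\gamma}(S)$ solves $\Delta V+\omega^2\epsp V=0$ weakly on all of $S$.

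Finally, $V\in H^1_{\gamma}(S)\subset L^2(S)$ is an eigenfunction of the periodic operator $A^{(\gamma)}=-\frac{1}{\epsp}\Delta$ with eigenvalue $\omega^2$, and the point of the argument is to rule this out unless $V=0$. If $V\neq 0$, then its Floquet transform $\mathcal{F}V(\alpha,\cdot)$ is a nonzero eigenfunction of $A_\alpha^{(\gamma)}$ for the eigenvalue $\omega^2$ on a set of $\alpha$ of positive measure; by the analyticity of the band functions $\lambda_m$ recorded in this appendix, some band then satisfies $\lambda_m(\alpha)=\omega^2$ on a set with an accumulation point, forcing $\lambda_m\equiv\omega^2$. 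But by \eqref{eq:kernel_Bxi} this makes every $\xi\in[-\pi,\pi)$ a characteristic value of $(B_\xi)$, contradicting the discreteness of the set of characteristic values established in \S\ref{sec:eigenval_characteristicval} via analytic Fredholm theory. The main obstacle is precisely this exclusion of a flat band; resolving it self-containedly through the discreteness of characteristic values (rather than citing a general no-point-spectrum theorem) is what closes the argument, giving $V\equiv 0$ and hence $v\equiv 0$.
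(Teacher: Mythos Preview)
Your proof is correct and follows essentially the same strategy as the paper: reduce to $H^1_{\gamma}(S^+)$ via Lemma~\ref{lemm:reduction_toH1}, reflect (odd for Dirichlet, even for Neumann) using the evenness $\epsp(-x_1,x_2)=\epsp(x_1,x_2)$, and then use the Floquet transform on the full strip. The only difference is cosmetic: in the final step the paper observes directly that $B_{\alpha}(\mathcal{F}V)(\alpha,\cdot)=0$ for every $\alpha$, so $(\mathcal{F}V)(\alpha,\cdot)=0$ for all but the finitely many real characteristic values and hence $V=0$ by the inverse Floquet transform, whereas you phrase this as a flat-band contradiction via the analyticity of the $\lambda_m$; both routes rest on the discreteness of real characteristic values from \S\ref{sec:eigenval_characteristicval}.
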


\begin{proof}
In both cases Lemma \ref{lemm:reduction_toH1} applies, so it suffices to show that 
a solution $v\in H^1_{\gamma}(S^+)$ to \eqref{dirprob} with $f=0$ vanishes. 
For the Dirichlet condition $v(0,\cdot)=0$ (i.e.\ $\theta_{\rm D}=1$ and $\theta_{\rm N}=0$)
we consider the odd extension $v(-x_1,x_2):=-v(x_1,x_2)$ for $x_1>0$, $x_2\in (0,L)$, and
for the Neumann condition $\diffq{v}{x_1}(0,\cdot)=0$ 
(i.e.\ $\theta_{\rm N}=1$ and $\theta_{\rm D}=0$) the even extension 
$v(-x_1,x_2):=v(x_1,x_2)$. In both cases the extended function $v$ satisfies the differential
equation \eqref{eq:pde} 
in the full strip $S$, and hence $B_{\alpha}(\mathcal{F}v)(\cdot,\alpha)=0$ for all $\alpha$.
Since $(B_{\xi})$ has at most a finite number of characteristic values on the real
axis, it follows that $(\mathcal{F}v)(\cdot,\alpha)=0$ for almost all $\alpha$. As
$v\in L^2(S)$, an application of the inverse Floquet transform yields $v\equiv 0$. 
\end{proof}

\renewcommand{\thesection}{Appendix C}
\section{Quasi-periodic boundary conditions with $\beta\in\{0,\pi\}$}\label{sec:appendix_quasiperiodic}
If $\gamma\in \{\gamma_0,\gamma_\pi\}$, 
there are infinitely many connected components of $\mathcal{S}$ containing
two characteristic values (with multiplicities). Here we set 
$\tilde{N}:=N$. Then each characteristic value $\xi_n^+$ with $n>\tilde{N}$
shares a 
connected component $\Scomp_n$ of $\mathcal{S}$ with precisely one other
characteristic value, w.l.o.g.\ $\xi_{n+1}^+$.  
For $\gamma\in\{\gamma_0,\gamma_\pi\}$ we cannot exclude the possibility 
of infinitely many characteristic values with partial null multiplicity
$2$ in general. Treating this case would require an extension of 
Lemma \ref{lemm:FloquetModes} and considerable additional work. 
To get along with Lemma \ref{lemm:FloquetModes} here, we have imposed
the additional assumption $\epsp(x_1,x_2) = \epsp(x_1,L-x_2)$ in
the case $\gamma\in\{\gamma_0,\gamma_\pi\}$ above. 
Then by a symmetry argument we can split the problem into
two problems for wave guides of width $L/2$: For $\beta=0$ we either
impose Dirichlet conditions  at $\{x:x_2\in\{0,L/2,L\}\}$ or Neumann
conditions at $\{x:x_2\in\{0,L/2,L\}\}$, and for $\beta=\pi$ we
impose Dirichlet conditions at $x_2=0$, $x_2=L$ and Neumann conditions
at $x_2=L/2$, or Neumann conditions at $x_2=0$, $x_2=L$ and Dirichlet conditions
at $x_2=L/2$. For each of the subproblems for waveguides of width $L/2$
we can exclude that possibility of infinitely many characteristic
values with partial null multiplicities $\geq 2$ as above.
Therefore, $u_n$ defined by \eqref{eq:defi_un} is again periodic in $x_1$.
However,  $P_{\Scomp_n}u_n$ is not necessarily a multiple
of $\psi_n$, but we only have 
$P_{\Scomp_n}u_n\in\mathrm{span}\{\psi_n,\psi_{n\pm 1}\}$.
Therefore, we replace \eqref{eq:scaling_un} by the normalization condition
\begin{align}\label{eq:normalization_un}
&\|P_{\Scomp_n}u_n\|_{L^2(\Omega)} =
\left(\kappa_n + \frac{1}{2\kappa_n}\right)^{-1/2},\qquad n>\tilde{N}\,,
\end{align}
which leaves a free phase factor. Moreover, we replace \eqref{eq:defi_vr} 
by
\[
\vr_n(x):=e^{-\kappa_n x_1} (P_{\Scomp_n}u_n)(x),\qquad n>\tilde{N}\,.
\]
It is easy to see that Lemma \ref{lemm:T0} and Corollary 
\ref{cor:estim_un} remain valid. The proofs of other results are not affected.

\subsection*{Acknowledgement} The authors would like to thank 
two anonymous referees for their careful reading of our paper and for 
many very helpful suggestions. Moreover, they thank Giovanni Alessandrini
for pointing out reference \cite{schulz:90} to them. 


\def\cprime{$'$}

\end{document}